\newcommand{\CC}{\mathbb C}
\newcommand{\RR}{\mathbb R}
\newcommand{\maA}{\mathcal{A}}
\newcommand{\maH}{\mathcal{H}}
\newcommand{\maK}{\mathcal{K}}
\newcommand{\maB}{\mathcal{B}}
\newcommand{\maC}{\mathcal{C}}
\newcommand{\maD}{\mathcal{D}}
\newcommand{\gp}{\Gamma}
\newcommand{\mbR}{\mathbb{R}}
\newcommand{\ZZ}{\mathbb{Z}}
\newcommand{\maM}{\mathcal{M}}
\newcommand{\maF}{\mathcal{F}}
\newcommand{\maU}{\mathcal{U}}
\newcommand{\maL}{\mathcal{L}}
\newcommand{\maI}{\mathcal{I}}
\newcommand{\maV}{\mathcal{V}}
\newcommand{\pdoM}{\Psi^{\infty}(M)/\Psi^{-\infty}(M)}
\newcommand{\smooth}{\mathcal{C}^{\infty}}
\newcommand{\graded}{\mathcal{G}\textrm{r}}
\newcommand{\bx}{{\bf x}}
\newcommand{\bxi}{\mbox{\boldmath$\xi$}}
\newcommand{\bfeta}{\mbox{\boldmath$\eta$}s}
\newcommand{\ip}[1]{\langle #1 \rangle}
\newcommand{\bigip}[1]{\left\langle #1 \right\rangle}
\theoremstyle{definition}
\newtheorem{definition}{Definition}[section]
\theoremstyle{plain}
\newtheorem{theo}[definition]{Theorem}
\newtheorem{prop}[definition]{Proposition}
\newtheorem{lem}[definition]{Lemma}
\newtheorem{cor}[definition]{Corollary}
\theoremstyle{remark}
\newtheorem{remark}[definition]{Remark}
\begin{document}

\textcolor{red}{
 \title[Equivariant homology for  pseudodifferential operators] {Equivariant homology for  pseudodifferential operators} 
}

\author[S Dave]{Shantanu Dave}
\address{University of Vienna, Austria }
\email{shantanu.dave@unvie.ac.at}
\thanks{Supported by FWF grant Y237-N13 of the Austrian Science Fund.}

\begin{abstract}We compute the cyclic homology for the cross-product algebra
$\maA(M)\rtimes\gp$ of the algebra of complete symbols on a compact manifold $M$ with action of  a  finite group $\gp$. A
spectral sequence argument shows that these groups can be identified using
deRham cohomology of the fixed point manifolds $S^*M^g$. In the process we obtain new results
about the homologies of general cross-product algebras and provide explicit identification of the homologies
for  $\smooth(M)\rtimes \gp$. 
\end{abstract}

\maketitle

\section{Introduction}\label{sec_intro}

On a closed manifold  $M$ the (classical)
pseudodifferential operators form an algebra $\Psi^{\infty}(M)$. The space of smoothing
operators $\Psi^{\!-\infty}(M)$ is then an ideal and  the quotient
$\maA(M):=\pdoM$ is called the algebra of complete symbols.  Let $\gp$ be a finite group acting on $M$ by
diffeomorphisms. Then by push-foreword of operators $\gp$ acts on
$\Psi^{\infty}(M)$ and on $\maA(M)$, namely  if $\maD$ be  a pseudodifferential operator and
$g\in\gp$, then
\[g.\maD (f):=g\maD(g^{\!-\!1}f)\qquad  \forall f \in \smooth(M).\] 
In this paper we compute the
Hochschild and cyclic homology groups of the cross-product algebra $\maA(M) \rtimes \Gamma$.

 Results on the cyclic homology for algebras of complete symbols over compact manifolds were  obtained  in \cite{Br-Gz,Wodzicki}. In particular, these homology calculations  recover the noncommutative residue of {Guillemin}\cite{Guillemin} and {Wodzicki}\cite{Wodzicki}. 
In a similar way among other things our culations of these homology groups tells us exactly how many
linearly independent equivariant traces to expect on the algebra
$\maA(M) \rtimes \Gamma$. These traces are computed in \cite{Dave} where
they are 
considered as equivariant generalization of the noncommutative residue 
Wodzicki\cite{Wodzicki} and Guillemin \cite{Guillemin}.

We shall begin with the motivation for our calculationss.

As  noticed in \cite{Dave} that certain traces on the crossed product algebra  $\maA(M)\rtimes \Gamma$ can be considered as equivariant versions of the noncommutative residue. To a generator $Ag$ in $\maA(M)\rtimes \Gamma$ and an equivariant oder $1$ positive elliptic operator $D$ wone associates the  zeta function
\[\zeta_{Ag,\maD}(z):= \operatorname{Tr}(D^{-z}Ag).\]
By means of stationary phase analysis  near the fixed points of the diffeomorphism  $g$, a meromorphic extension of these zeta functions to whole of $\CC$ with some simple poles can be shown. Then for a fiexed conjugacy class $\ip{\gamma}$ a trace on $\maA(M)\rtimes \Gamma$ is obtained by
\[\operatorname{Tr}_{\ip{\gp}}(\sum_{g\in\gamma}A_gg):=\underset{z=0}{res}(\sum_{g\in\ip{\gp}}\zeta_{A_g,\maD}(z).\]

Analogous to  Guillemin \cite{Guillemin} the equivariant  traces can be used to obtain an equivariant Weyl's formula \cite{}, namely if $\\Gamma$ acts faithfully  and $\pi$ is an irreducibel representation of $\Gamma$ then for an invariant operator $\maD$
\[  N_{\pi,D}(\lambda):=\sum_{\lambda_i<\lambda}\
    \textrm{``multiplicity of}\ \pi\ \textrm{in}\ V_i.\textrm{''}\simeq \frac{C}{dim~\pi}\lambda^{\frac{dim~M}{order(\maD)}}
\]

Other equivariant results such as an equivariant  Connes trace formula as well as extensions of the logrithmic symbols  based on a $2$  cocycle in $H^2(S_{log}(M)\rtimes \Gamma)$ as in  \cite{KV} can also be obtained from the above mentioned traces on $\maA(M)\rtimes \Gamma$.

Here we are interesed in knowing the higher versions of these equivariant noncommutative residues.

Our result is as follows. Let
$\Gamma_{\gamma} :=\{g \in \Gamma, g\gamma = \gamma g\}$ be the
centralizer of $g$ in $\Gamma$.  Let
$k_{\gamma}=\dim(T^*M^{\gamma})$. Then
\[
HH_k(\maA(M)\rtimes \Gamma)=\sum_{\langle\gamma\rangle}
H^{k_{\gamma}-k}(S^*M^{\gamma}\times S^1)^{\Gamma_{\gamma}}.
\]
Here the sum is taken over a set of representatives of the conjugacy
classes. Also
\[HC_k(\maA(M)\rtimes \Gamma) = \sum_{j\ge 0}
HH_{k-2j}(\maA(M)\rtimes \Gamma).\]

Our determination of these homology groups extend the results of
\cite{Br-Gz}, using also techniques from \cite{Br-Ni, Co}.  Interestingly, there are some
qualitatively new phenomena arising at the nontrivial conjugacy
classes that are not expected from the non-equivariant case. 

The cross-product algebra $\maB:=\maA(M)\rtimes \Gamma$ has a
natural filtration that comes from the order of the operators on
$\maA(M)$. We use the spectral sequence associated to this
filtration in our homological computation. The first hurdle here is
that the associated graded algebra $Gr(\maB)=Gr(\maA(M))\rtimes
\Gamma$ is noncommutative, unlike $Gr(\maA(M))$ which is
commutative. Nevertheless, this algebra is the cross-product of a
commutative algebra by a finite group, and as such it preserves many
features of commutativity. In particular, its Hochschild
homology has a description using differential forms on the fixed
point sets of the elements of the group \cite{BaumConnes}. The differentials in the
spectral sequence turn out to preserve this structure. The action of
the first relevant differential, $d_2$, is similar to the one in the
case without group action \cite{Br-Gz}, albeit technically different. Here one
can exploit the  structure of certain symplectic submanifolds of the the
cotangent bundle. Moreover, the residue trace associated to each
conjugacy class of $\Gamma$ (provided that that conjugacy class has
a nonempty, connected fixed point set) will no longer have the
property of being localized to a singly homogeneous component of the
symbol in any coordinate neighborhood. This is in contrast with the
usual case when there is no group action, the residue
trace is localized on the component of homogeneity $-n$ of the
complete symbol. Moreover, the study of the equivariant residue
traces requires a nontrivial use of the stationary action principle.

As mentioned above, we need to know as explicitly as possible the
Hochschild homology groups of $\smooth(S^*M)\rtimes \Gamma$. In the
process of these computations, several new results on these groups are obtained. These
results fit into the general philosophy of noncommutative geometry
that Hochschild homology is the analogue of smooth forms on a
compact manifold and (periodic) cyclic homology is the analogue of
  deRham cohomology for manifolds. These results are consistent with
a famous theorem of Connes that computes the Hochschild and cyclic
homology groups of $\smooth(M)$, the algebra of smooth functions on
$M$. They are also consistent with the results of Baum and Connes
\cite{BaumConnes} on the homology of cross-products by proper
actions.

\section{Hochschild and cyclic homology for cross-products}
We recall the definitions for Hochschild and cyclic homology of
algebras. We describe the properties needed, and set up our notation
for the subsequent sections. A reference for most of the results cited
here is Loday\cite{Loday}.  Unless otherwise stated all algebras in this section
are over the complex numbers and shall be unital.

The Hochschild homology of an unital algebra $\maA$, denoted by  $HH_*(A)$ is the
homology of the complex $\maH_*(A):=(\maA^{\otimes n+1},b)$ where the differential $b$ is given by
\begin{multline*}
b(a_o\otimes a_1\otimes a_2 \otimes\ldots \otimes a_n):= \sum_{i=0}^{n-1} (\!-\!1)^i a_0\otimes a_1\otimes a_2
\otimes\ldots \otimes a_ia_{i+1} \ldots a_n\\ +(\!-1\!)^n a_na_0\otimes a_1\otimes a_2
\otimes\ldots \otimes a_{n-1}.
\end{multline*}

Let $\maA$ be an algebra and $\gp$ be a finite group acting on it by
$\pi : \gp \rightarrow Auto(\maA)$. For most purposes, $\maA$ will be a
locally convex topological algebra with jointly continuous product. We
define the cross-product algebra $\maB=\maA\rtimes \gp$ as the algebra
generated by elements of the form $\{a_gg|a_g\in
\maA\,,g\in\gp\}$ with the product given by

\[a_gg \cdot b_hh=a_gg(b_h)gh.\]
The Hochschild homology of $\maB$ admits a natural decomposition.
For every conjugacy class $\langle \gamma \rangle $ of the group
$\gp$, there is a  subcomplex of $\maH_*(\maB)$ given by
\[\maH_*(\maA)_{\gamma}= \{(a_{g_0}g_0\otimes a_{g_1}g_1\otimes
\ldots \otimes a_{g_n}g_n)|\; g_1.g_2\ldots g_n.g_0 \in\langle \gamma
\rangle\},\] which yields  the decomposition:
\[ \maH_*(\maB)=\bigoplus_{\langle \gamma
\rangle}\maH_*(\maA)_{\gamma}.\]

We shall also use the notation $L_*(\maA,\gp,\gamma)$ for
$H_*(\maA)_{\gamma}$ if we need to specify the group $\gp$
explicitly.

Our aim is to first identify each of the homology of a conjugecy components $\maH_*(\maA)_{\gamma}$ with that of a certain twisted
Hochschild complex.

\subsection{Twisted Hochschild Complex for Commutative
Algebras}\label{sub-THCC}

We consider a commutative algebra $\maA$. Let $h$ be an
automorphism of $\maA$. Let as usual $\maA^e=\maA\otimes \maA$. Consider the
$\maA^e$ module $\maA_h$ with the same linear structure as
$\maA$, but the module structure given by
\[(a \otimes b)\cdot c= ac \cdot h(b).\]

Furthermore let us consider the complex $\maC_*(\maA)_h= \bigoplus
\maA^{\otimes n+1},b_h$, where the twisted Hoschild differential $b_h$ is defined as
\begin{equation}\label{twisted}
b_h(a_0 \otimes a_1 \otimes \ldots \otimes a_n)
= (a_0h(a_1)\otimes \ldots \otimes a_n)+ 
\sum_{i=1}^n(-1)^i (a_0 \otimes \ldots \otimes a_ia_{i+1} 
\otimes \ldots \otimes a_n).
\end{equation}

In case of a group action on a manifold $M$ we shall be able to very easily identify the homology of twisted complex $\maC_*(\smooth(M))_h$ with the differential forms on the fixed point manifold $M^h$. At he same time the result in this cetion show that twisted homology gives the same homology as the conjucacy component  $\maH_*(\smooth(M)\rtimes \Gamma)_{\ip{h}}$.

Then we have the following result.

\begin{lem}\label{Tor}\ We have
$\maC_*(\maA)_h \simeq Tor^{\maA^e}_*(\maA,\maA_h).$
\end{lem}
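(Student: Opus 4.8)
The plan is to realize $\maC_*(\maA)_h$ as the complex that computes the indicated $\operatorname{Tor}$ groups by tensoring a single fixed projective resolution of $\maA$ against the module $\maA_h$. Since $\maA$ is commutative, $\maA^e=\maA\otimes\maA$ is itself commutative, so $\operatorname{Tor}^{\maA^e}_*(\maA,\maA_h)$ may be computed by resolving the first argument $\maA$ over $\maA^e$ and then tensoring with $\maA_h$; the balancing property of $\operatorname{Tor}$ guarantees this gives the correct groups.

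First I would recall the two-sided bar resolution of $\maA$ as an $\maA^e$-module. Let $\maP_n=\maA^{\otimes(n+2)}$, regarded as a free $\maA^e$-module on the inner factor $\maA^{\otimes n}$ via left and right multiplication on the two outermost tensor slots, and equip $\maP_\bullet$ with the bar differential $b'$, giving
\[
\cdots \to \maA^{\otimes(n+2)} \xrightarrow{\,b'\,} \maA^{\otimes(n+1)} \to \cdots \to \maA\otimes\maA \xrightarrow{\,\mu\,} \maA \to 0 .
\]
The augmentation $\mu$ together with the standard contracting homotopy that prepends a unit, $s(a_0\otimes\cdots\otimes a_{n+1})=1\otimes a_0\otimes\cdots\otimes a_{n+1}$, uses only unitality of $\maA$ and shows that $\maP_\bullet\to\maA$ is a resolution by free, hence projective, $\maA^e$-modules. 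Consequently $\operatorname{Tor}^{\maA^e}_*(\maA,\maA_h)=H_*\bigl(\maP_\bullet\otimes_{\maA^e}\maA_h\bigr)$.

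Next I would compute $\maP_\bullet\otimes_{\maA^e}\maA_h$ termwise. Writing $\maP_n=\maA^e\otimes\maA^{\otimes n}$ with the free part absorbed by the tensor product over $\maA^e$ yields a natural isomorphism of vector spaces $\maP_n\otimes_{\maA^e}\maA_h\cong\maA^{\otimes n}\otimes\maA_h\cong\maA^{\otimes(n+1)}$, which is exactly the degree-$n$ term of $\maC_*(\maA)_h$. The crucial point is to track the induced differential: the interior faces of $b'$ descend directly to the inner multiplications $a_i\otimes a_{i+1}\mapsto a_ia_{i+1}$, while the two outer faces become folded onto the surviving factors through the $\maA^e$-action on $\maA_h$. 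Because that action is twisted on one side by the rule $(a\otimes b)\cdot c=ac\,h(b)$, the folding inserts the automorphism $h$ precisely where $h$ appears in the leading term $a_0h(a_1)\otimes\cdots\otimes a_n$ of \eqref{twisted}, leaving the remaining wrap-around face untwisted. Matching faces and signs then identifies $\maP_\bullet\otimes_{\maA^e}\maA_h$ with $(\maA^{\otimes(n+1)},b_h)=\maC_*(\maA)_h$ as complexes, and passing to homology gives the claim.

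The routine but delicate step, and the one I would be most careful about, is the bookkeeping in this last paragraph: verifying that the twisted $\maA^e$-module structure on $\maA_h$ sends the appropriate outer face of the bar complex to exactly the $h$-twisted face of \eqref{twisted}, with the correct sign and in the correct tensor slot, rather than a variant placing $h$ elsewhere. Everything else (freeness of $\maP_\bullet$, acyclicity via the unit homotopy, and the balancing of $\operatorname{Tor}$ permitted by commutativity of $\maA^e$) is formal.
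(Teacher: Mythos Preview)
Your proposal is correct and follows essentially the same approach as the paper: both take the bar resolution of $\maA$ as an $\maA^e$-module, tensor it over $\maA^e$ with $\maA_h$, and identify the resulting complex with $(\maA^{\otimes(n+1)},b_h)=\maC_*(\maA)_h$. The paper writes down an explicit isomorphism $\psi$ and checks $b_h\circ\psi=\psi\circ(b'\otimes 1)$, which is exactly the ``bookkeeping'' step you flag as the delicate point.
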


\begin{proof}
Let $\maH'_*(\maA)$ denote the bar resolution of $\maA$. That is,
$\maH'_*(\maA)=(\maA^{\otimes n+1},b')$ is the standard projective
resolution of $\maA$ by $\maA^e$ modules. Consider the map
\[\psi : \maH_n(\maA) \otimes_{\maA^e}  \maA_h 
\longrightarrow \maC_n(\maA)_h\]
\[\psi(a_0\otimes a_1 \otimes \ldots \otimes a_n)\otimes a:\rightarrow
(a_nah(a_0)\otimes a_1 \otimes \ldots \otimes a_{n-1}).\]

Then we check that
 $b_h \circ \psi = \psi \circ b' \otimes 1$,
which means that $\psi$ is a morphism of complexes.
The result follows from the fact that $\psi$ is an isomorphism
and the definition of the $Tor$ groups.
\end{proof}

Abstracty the above result  can tell us that the twisted Hoschild complex $\maC_*(\maA)_h$  has the same homology as the conjugacy  component $\maH_*(\maA)_{\ip{h}}$ but we are in the lookout for a concrete qusisiomorphism.

 To this end we  start with another well known acyclic moddel. For any finite group $G$, let $\beta G$, $(\beta G)_n
=\mathbb{C}[\underbrace{G\times G\times \dots \times G}_{\text{ n
times}}]$, be the complex endowed with the differential
\begin{eqnarray*}
\overline{d}(g_0,g_1,\dots
,g_n)=\sum_{i=0}^n(-1)^i(g_0,g_1,\dots,\hat{g_i},\dots,g_n),
\end{eqnarray*}
where $\hat{g_i} $ as usual means that the entry is omitted. This
differential also comes from a simplicial object structure on $\beta
G$. It is well known that
$$H_q(\beta G)=\left\{\begin{array}{c}
    \CC \hskip 0.5in \text{if } q=0\\
    0, \hskip 0.5in q>0
\end{array} \right.$$
and hence $\beta G$ is a free resolution for the trivial $G$ module
$\mathbb{C}$.

 For any subgroup $G \subseteq \gp$, we define the
complex
\[\tilde{L}_n(\maA,G,h):=(\maC_*(\maA)_h)_n \otimes \beta G_n\]
with induced simplicial
structure and differential given by
\begin{multline*}
 b_h\otimes \overline{d}(a_0g_0,a_1g_1, \dots ,a_ng_n)=(a_0 h(a_1)g_1,a_2g_2,\dots,a_ng_n)+\\
\sum_{i=1}^{n-1}(-1)^i(a_0g_0,\dots,a_ia_{i+1}g_{i+1}, \dots ,a_ng_n) +\\
(-1)^n(a_na_0g_0,a_1g_1 \dots ,a_{n-1}g_{n-1}).
\end{multline*}
 As one would expect this  $\tilde{L}_*(\maA,G,h)$ complex to be a simply connected cover of $\maH_*(\maA)_{\ip{h}}$.( See \cite{Nistor1} for more detailed presentation.)
 
An application  of the Eilenburg-zibler isomorphism  gives us the following result.
\begin{lem}
We have $\tilde{L}(\maA,G,h)$ is quasi-isomorphic to  $\maC_*(\maA)_h.$
\end{lem}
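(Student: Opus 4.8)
The plan is to recognise $\tilde{L}(\maA,G,h)$ as the chain complex of the diagonal of a bisimplicial vector space and then to apply the Eilenberg--Zilber theorem together with the acyclicity of $\beta G$ recorded above. First I would note that the twisted Hochschild complex $\maC_*(\maA)_h$ is the chain complex associated to a simplicial vector space: with $(\maC_*(\maA)_h)_n=\maA^{\otimes n+1}$, the face operators are $d_0(a_0\otimes\cdots\otimes a_n)=a_0 h(a_1)\otimes a_2\otimes\cdots\otimes a_n$ (the twisted right action of $\maA_h$), $d_i(a_0\otimes\cdots\otimes a_n)=a_0\otimes\cdots\otimes a_i a_{i+1}\otimes\cdots\otimes a_n$ for $1\le i\le n-1$, and $d_n(a_0\otimes\cdots\otimes a_n)=a_n a_0\otimes a_1\otimes\cdots\otimes a_{n-1}$ (the untwisted left action), with degeneracies inserting the unit; its boundary is exactly $b_h=\sum_{i=0}^n(-1)^i d_i$. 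Likewise $\beta G$ is the chain complex of the bar simplicial object with faces $\overline{d}_i(g_0,\dots,g_n)=(g_0,\dots,\hat{g_i},\dots,g_n)$.

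The key bookkeeping step is to identify the differential defining $\tilde{L}(\maA,G,h)$ with the diagonal of the bisimplicial object $(m,n)\mapsto(\maC_*(\maA)_h)_m\otimes\beta G_n$. Setting $\tilde{L}_n=(\maC_*(\maA)_h)_n\otimes\beta G_n$ and packaging $a_i\otimes g_i$ as $a_i g_i$, the diagonal face operators are $d_i\otimes\overline{d}_i$. I would check directly that the term $i=0$ yields $(a_0 h(a_1)g_1,a_2 g_2,\dots,a_n g_n)$, the terms $1\le i\le n-1$ yield $(a_0 g_0,\dots,a_i a_{i+1}g_{i+1},\dots,a_n g_n)$, and the term $i=n$ yields $(a_n a_0 g_0,a_1 g_1,\dots,a_{n-1}g_{n-1})$, so that $\sum_i(-1)^i(d_i\otimes\overline{d}_i)$ is precisely the displayed differential $b_h\otimes\overline{d}$. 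Hence $\tilde{L}(\maA,G,h)=C_*\big(\operatorname{Diag}(\maC_*(\maA)_h\otimes\beta G)\big)$.

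Now the Eilenberg--Zilber theorem provides a natural quasi-isomorphism (the shuffle map, with homotopy inverse the Alexander--Whitney map) between the diagonal complex and the total complex of the double complex $\maC_*(\maA)_h\otimes\beta G$:
\[
\tilde{L}(\maA,G,h)=C_*\big(\operatorname{Diag}(\maC_*(\maA)_h\otimes\beta G)\big)\simeq\operatorname{Tot}\big(\maC_*(\maA)_h\otimes\beta G\big).
\]
Finally I would invoke the computation $H_0(\beta G)=\CC$ and $H_q(\beta G)=0$ for $q>0$ recalled above, i.e.\ that the augmentation $\epsilon\colon\beta G\to\CC$ is a quasi-isomorphism. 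Since all objects are complexes of $\CC$-vector spaces, hence flat, tensoring with $\epsilon$ preserves quasi-isomorphisms (equivalently, the K\"unneth theorem applies), so $\operatorname{Tot}(\maC_*(\maA)_h\otimes\beta G)\simeq\maC_*(\maA)_h\otimes\CC=\maC_*(\maA)_h$, the comparison map being induced by $\mathrm{id}\otimes\epsilon$. Composing with the Eilenberg--Zilber quasi-isomorphism gives $\tilde{L}(\maA,G,h)\simeq\maC_*(\maA)_h$.

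I would flag the sign-and-face bookkeeping of the second step as the only point requiring genuine care: one must confirm that the twist by $h$ in the zeroth face and the cyclic wrap-around in the $n$-th face of $\maC_*(\maA)_h$ combine correctly with the deletion faces of $\beta G$ to reproduce exactly the stated boundary. Once this identification is secured, Eilenberg--Zilber and the acyclicity of $\beta G$ do the rest, and no real difficulty remains, precisely because we work over the field $\CC$.
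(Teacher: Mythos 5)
Your proposal is correct and follows essentially the same route as the paper: identify $\tilde{L}(\maA,G,h)$ with the diagonal of the bisimplicial object $(\maC_*(\maA)_h)\otimes\beta G$, apply Eilenberg--Zilber to pass to the total complex, and use the acyclicity of $\beta G$ together with the K\"unneth theorem over $\CC$. Your explicit verification of the face-operator bookkeeping and your use of the augmentation $\epsilon\colon\beta G\to\CC$ make the argument slightly more careful than the paper's (which projects onto the $(\beta G)_0$ component), but the substance is identical.
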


\begin{proof}
By definition, $\tilde{L}(\maA,G,h)_n \simeq \beta G_n
\otimes(\maC_*(\maA)_h)_n$. Thus by the K\"unneth formula and the
Eilenberg-Zibler theorem, we have the following diagram:
\begin{center}
$ \xymatrix{ {\tilde{L}}(\maA,G,h) \ar[rr]^{\pi_G} \ar[dr]^f & &
{\maC_*(\maA)_h }\ar@<1ex>[ll]^{\pi^G}\\ & \beta G \otimes
{\maC_*(\maA)_h} \ar@<1ex>[ul]^g \ar[ur]^{\pi} },$
\end{center}
where we have denoted
$$\pi_G (a_0g_0,a_1g_1, \ldots ,a_ng_n) :=(a_0,a_1,\ldots ,a_n)$$
and similarly,
$$\pi^G (a_0,a_1,\ldots ,a_n):=(a_0e,a_1e,\ldots,a_ne),$$ and also the
maps $f$ and $g$ are the maps for the Eilenberg-Zibler
quasi-isomorphism, and $\pi$ is the projection on the first
component. 
(The Eilenberg-Zibler isomorphism applies because the complex
$\tilde{L}(\maA,G,h)$ is obtained from the product of two simplicial
objects.) That is, $$\pi((\beta G)_l
\otimes \maC_k(\maA))=\left\{\begin{array}{cl}
0 \hskip 0.5in &\text{if }l \neq 0\\
\maC_k(\maA) \hskip 0.5in
&\text{if }l=0.
\end{array} \right.$$

Since $H_q(\beta G)=0$ for $q>0$, $\pi$ turns out to be a
quasi-isomorphism. Thus $\pi_G$ and $\pi^G$ are quasi-isomorphisms.
\end{proof}

  Let $\maA$ be a commutative  unitial  algebra with an action of a group $\Gamma$ .
Let $h\in\ip{\gamma}$ be an element of the conjugacy class $\ip{\gamma}$ and
$\maC_*(A)_h$ be the corresponding  twisted complex under the action of $h$,
then we can make the following identifications:

\begin{prop}\label{cp} The conjugacy component of the Hochschild homology
$\maH_*(\maA)_{h}$ is quasi-isomorphic to $\maC_* {(\maA)_h}^{\gp_h}$.

And the chain map $G:\maH_*(\maA)_{\ip{\gamma}} \longrightarrow
\maC_*(\maA)_h^{\Gamma_h}$ is given by the explicit formula
\begin{multline}\label{quasi-isomorphism}
	G(b_0h_0,\, b_1h_1,\ldots,\,
	b_nh_n)=\\
	\frac{1}{|\gp_h|}\sum_{g_0 \in
	\gp_h}(hg_0h_0^{-1}(b_0),\, g_0(b_1),\ldots,\, g_0h_1\ldots
	h_{n-1}(b_n)).
\end{multline}
\end{prop}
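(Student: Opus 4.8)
The plan is to realise the asserted equivalence as a composite that passes through the cover $\tilde{L}(\maA,\Gamma_h,h)$ of the previous lemma, and only afterwards to match that composite with the explicit formula \eqref{quasi-isomorphism}. First I would record the one structural fact about the differential $b$ on $\maB=\maA\rtimes\gp$ that drives everything: when one merges two adjacent slots, $a_ig_i\cdot a_{i+1}g_{i+1}=a_i\,g_i(a_{i+1})\,g_ig_{i+1}$, the total group element $g_1g_2\cdots g_ng_0$ attached to a chain is unchanged by every face except the zeroth, which conjugates it by $g_1$. Consequently the conjugacy class of $g_1\cdots g_ng_0$ is the only invariant of the complex, the decomposition of $\maH_*(\maB)$ over conjugacy classes is respected, and within a fixed component $\maH_*(\maA)_{\ip{\gamma}}$ there remains a residual conjugation ambiguity in the group data. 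The cover $\tilde{L}(\maA,\Gamma_h,h)$, in which all labels are forced into the centralizer $\Gamma_h$, is designed to rigidify exactly this ambiguity, with deck group $\Gamma_h$.

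Step one is therefore to produce a natural isomorphism of complexes $\maH_*(\maA)_{\ip{\gamma}}\cong \tilde{L}(\maA,\Gamma_h,h)_{\Gamma_h}$ between the conjugacy component and the $\Gamma_h$-coinvariants of the cover, following the simply-connected-cover construction of \cite{Nistor1}: fixing the representative $h$, a chain of $\maH_*(\maA)_{\ip{\gamma}}$ lifts to chains with labels in $\Gamma_h$ indexed by the residual freedom, and the $\Gamma_h$-orbits recover the original chain. I would then check that this assignment intertwines the Hochschild differential $b$ with the differential of $\tilde{L}$.

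Step two transports this through the previous lemma. The map $\pi_G\colon \tilde{L}(\maA,\Gamma_h,h)\to\maC_*(\maA)_h$ is a quasi-isomorphism, and it is manifestly $\Gamma_h$-equivariant for the diagonal action on the algebra tensor factors. Since $\Gamma_h$ is finite and we work over $\CC$, the functor of $\Gamma_h$-(co)invariants is exact and identifies coinvariants with invariants; hence $\pi_G$ descends to a quasi-isomorphism $\tilde{L}(\maA,\Gamma_h,h)_{\Gamma_h}\to \maC_*(\maA)_h^{\Gamma_h}$. Composing with the isomorphism of step one yields a quasi-isomorphism $\maH_*(\maA)_{\ip{\gamma}}\to\maC_*(\maA)_h^{\Gamma_h}$.

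Step three is to identify this composite with the explicit map $G$ of \eqref{quasi-isomorphism}. Unwinding the two previous steps, the averaging $\tfrac{1}{|\Gamma_h|}\sum_{g_0\in\Gamma_h}$ is precisely the projector onto $\Gamma_h$-invariants coming from step two; one checks directly that the right-hand side of \eqref{quasi-isomorphism} is $\Gamma_h$-invariant, using $g_0h=hg_0$ for $g_0\in\Gamma_h$. The twisting factors $hg_0h_0^{-1}(b_0)$ in the zeroth slot and $g_0h_1\cdots h_{k-1}(b_k)$ in the $k$-th slot record how the group labels act on the algebra entries once the cover of step one is combined with the $\maA^e$-module structure of $\maA_h$. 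The main obstacle is the verification that $G$ is a chain map, that is $b_h\circ G=G\circ b$: in contrast to the untwisted case, the zeroth and last faces of $b$ on $\maB$ interact with the $h$-twist in $b_h$ and only match after the sum over $g_0$, since it is the averaging that absorbs the conjugation ambiguity in $g_1\cdots g_ng_0$ noted at the outset. This is the step that is ``technically different'' from the non-equivariant computation; once it is settled, the fact that $G$ is a quasi-isomorphism is automatic, as $G$ then agrees with the composite of the (quasi-)isomorphisms of steps one and two.
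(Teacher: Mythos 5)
Your proposal matches the paper's argument: the paper likewise realizes the quasi-isomorphism by identifying $\maH_*(\maA)_{\ip{\gamma}}$ with the $\gp_h$-invariants of the cover $\tilde{L}$ (via an explicit covering map $\alpha$ and the averaged lifting $T=\tfrac{1}{|\gp_h|}\sum_{g_0\in\gp_h}T_{g_0}$) and then composing with the projection $\pi_{\gp}$ of the previous lemma, so that $G=\pi_{\gp}\circ T$ is exactly your composite. The only discrepancies are cosmetic: the honest isomorphism of complexes is with $\tilde{L}(\maA,\gp,h)^{\gp_h}$, the cover built from the full group (your $\tilde{L}(\maA,\gp_h,h)$ with labels forced into $\gp_h$ is only quasi-isomorphic to the conjugacy component, not isomorphic to it), and in the paper each individual translate $T_{g_0}$ already commutes with the differentials precisely because $g_0h=hg_0$, the averaging being needed only to land in the $\gp_h$-invariants and to make $T$ a two-sided inverse of $\alpha$, not to make the chain-map identity hold.
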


\begin{proof}

There is a \textit{covering map } $\alpha :\tilde{L}(\maA,\gp,h)
\longrightarrow \maH_*(\maA)_{\gamma}$ given by \
\begin{multline*}
\alpha(a_0g_0,\, a_1g_1,\, \ldots ,\, a_ng_n) =
(g_n^{-1} a_0 hg_0,\, g_0^{-1} a_1 g_1,\, \ldots,\,
g_{n-1}^{-1} a_n g_n) \\ =
(g_n^{-1}(a_0)g_n^{-1}hg_0,\, g_0^{-1}(a_1)g_0^{-1}g_1,\, \ldots,\,
g_{n-1}^{-1}(a_n)g_{n-1}^{-1}g_n),
\end{multline*}
which is a chain map. In fact, $\alpha$ is a morphism of simplicial
objects and $\gp_h$ equivariant. We would like to lift this map $\alpha$ from $\maH_*(\maA)_{\ip{h}}$ to $\tilde{L}(\maA,\gp,h)^{\gp_h}$.

  The map $\alpha$ above also restricts to a chain map on the qusi-isomorphic complex $\tilde{L}(\maA,\Gamma_h,h)$  which we denote by 
$\alpha_{|\gp_h}$. An explicit lifting is easy to construct for
$\alpha_{\gp_h}$ as follows:

Define, for any $g_0 \in \gp_h$ a linear map
$T_{g_0}:L(\maA,\gp_h,h) \longrightarrow \tilde{L}(\maA,\gp_h,h)$
by the formula
\begin{multline*}
T_{g_0}(b_0h_0,\, b_1h_1,\, \ldots,\, b_nh_n) = (hg_0h_0^{-1}(b_0)g_0,
\, g_0(b_1)g_0h_1,\, \ldots\, ,\\ g_0h_1\ldots h_{i-1} (b_{i-1})g_0h_1
\ldots h_i,\, \ldots,\, g_0h_1 \ldots h_{n-1}(b_n) g_0h_1\ldots h_n).
\end{multline*}
Then varify directly that
\[T_{g_0}b_h-bT_{g_0}=0.\]

Let us define the map $T=\frac{1}{|\gp_h|}\sum_{g_0 \in \gp_h}
T_{g_0}$. Clearly
$T$ maps $L(\maA.\gp_h,h)$ to $\tilde{L}(\maA,\gp_h,h)^{\gp_h}$. We
next observe that $\alpha\circ T =Id_{\maH_*(\maA)_{\gamma}} \text{
and } T \circ \alpha = Id_{\tilde{L}(\maA,\gp,h)^{\gp_h}}$ and that $T=\alpha^{-1}$ and therefore must be a chain map. Thus we have established the following commutative diagram of qusi-isomorphisms.
\begin{equation*}
	\xymatrix{ {\maC_*(\maA)^{\gp_h}_h} 
	\ar[dr]^{\pi^{\gp_h}} &{\tilde{L}}(\maA,\gp,h)^{\gp_h}
	\ar[l]^{\pi_{\gp}} \ar@<1ex>[r]^{\alpha} \ar@{=}[d] & \maH_*(\maA)_{\gamma}
	\ar@<1ex>[l]^T \\ & {\tilde{L}}(\maA,\gp_h,h)^{\gp_h}
	\ar@<1ex>[r]^{\alpha _{\gp_h}} & L(\maA,\gp_h,h).
	\ar@<1ex>[l]^T \ar@{^{(}->}[u]}
\end{equation*}

Here $F:=\alpha \circ \pi^{\gp}:\maC_*(\maA)_h^{\gp_h}
\longrightarrow \maH_*(\maA)_{\gamma}$ is of the form
\begin{eqnarray}
F(a_0,a_1,\ldots,a_n)=(a_0h,a_1e,\ldots a_ne).
\end{eqnarray}

At the same time the inverse qusi-isomorphism 
$G:=\pi_{\gp}\circ T:\maH_*(\maA)_{\ip{\gamma}} \longrightarrow
\maC_*(\maA)_h$ is given by the formula \eqref{quasi-isomorphism}
\end{proof}

Thus as a consequence we  immediately have that
\[\maH_*(\maA)_{\ip{\gamma}} \simeq \maC_*(\maA)_h^{\gp_h}\simeq Tor_*^{A^e}(\maA, \maA_h)^{\gp_h}.\]

As already mentioned earlier the computation  shall be of interest in case of $\maA=\smooth(M)$. We now begin our efforts to show that in this case homology of  $C_*(\smooth(M))_h$ can be described by  differential forms on the fixed point sets  $N^h$.

\section{Local computations}\label{section-LAR}
Let us now specialize to the case when the algebra $\maA=\smooth(V)$ is the algebra of smooth functions on a vector space $V$. and $\gamma:V\rightarrow V$ is a linear transformation. In this  section we shall identify our twisted Hoschild homology that is the homology of the complex $\maC_*(\maA)_{\gamma}$ with the differential forms on the fixed point  subspace $V^{\gamma}$. To this end it is  it is most convienient to  introduce the language of Kasul complexs.

\subsection{Koszul Complex}\label{sub-KC}
Let $R$ be a commutative ring. Let $f_1,f_2 \ldots f_q \in R$. Let
$\{v_j\}$ be a basis for $\CC^q$. We define the Kasul complex of $R$
generated by $f_1,f_2,\ldots,f_q$ by
\begin{multline*}
  \maK_l(R:f_1,f_2,\ldots,f_q)=R\otimes \wedge^l\CC^q \\
  \delta(r\otimes v_{i_1}\wedge v_{i_2}\wedge \ldots \wedge
  v_{i_l})=\sum_{j=1}^{j=q}(-1)^j(rf_j \otimes v_{i_1} \wedge v_{i_2} \ldots
  \wedge\widehat{ v_{i_j}} \wedge \ldots \wedge v_{i_l}).
\end{multline*}
This  differential arises naturally from a simplicial  module structure. We observ3e the following properties:
\begin{lem}\label{lem_kas}
\begin{enumerate}
\item\label{Kaszul1}
Let $R$, $R'$ be two algebras over $\CC$. with $S \subset R$ and $S'
\subset R'$ be subsets. Denote by $S \coprod S'=S \otimes 1 \cup
1\otimes S' \subset R\otimes R'$. Then
\[
\maK_*(R\otimes R': S \coprod  S')= \maK_*(R:S)\otimes  \maK_*(R':S').
\]

\item\label{Kaszul2}
 Let $V=\RR^n$. Let $\maA= C^{\infty}(V)$, and let $X_i$ be the coordinate functions on $V$. Then
$$
\maH_q(\maK_*(\maA:\{X_i\}_{i=1}^n)=
\begin{cases}0& q>0\\
  \CC & q=0.
\end{cases}
$$
\end{enumerate}
\end{lem}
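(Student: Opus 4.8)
The plan is to establish the two assertions in order, deducing part~\ref{Kaszul2} from part~\ref{Kaszul1} together with a one--variable computation and the K\"unneth theorem.

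For part~\ref{Kaszul1}, I would view each Koszul complex as a differential graded module: $\maK_*(R:S)=R\otimes\wedge^*\CC^q$ is freely generated over $R$ by the degree--one elements $v_j$, and $\delta$ is the $R$--linear graded derivation determined by $\delta(v_j)=(-1)^j f_j$. Writing $\CC^{q+q'}=\CC^q\oplus\CC^{q'}$, with the first summand carrying the generators coming from $S$ and the second those from $S'$, the canonical isomorphism of graded algebras $\wedge^*(\CC^q\oplus\CC^{q'})\cong\wedge^*\CC^q\otimes\wedge^*\CC^{q'}$ identifies $\maK_*(R\otimes R':S\coprod S')$ with $\maK_*(R:S)\otimes\maK_*(R':S')$ as graded $R\otimes R'$--modules. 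It then remains to check that the differential on the left corresponds to the total differential $\delta\otimes 1+(-1)^{\deg}\otimes\delta'$ on the right. Since both are $R\otimes R'$--linear graded derivations that agree on the degree--one generators (those from $S$ are sent to $f_j\otimes 1$, those from $S'$ to $1\otimes f'_k$), they coincide, so the identification is a chain isomorphism.

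For part~\ref{Kaszul2}, I would first reduce to $n=1$. Using the standard identification $\smooth(\RR^n)\cong\smooth(\RR)\,\hat\otimes\cdots\hat\otimes\,\smooth(\RR)$ of nuclear Fr\'echet algebras (completed projective tensor product) and applying part~\ref{Kaszul1} inductively, one obtains $\maK_*(\smooth(\RR^n):\{X_i\})\cong\bigotimes_{i=1}^n\maK_*(\smooth(\RR):X)$, the coordinate functions matching up under the tensor decomposition. For $n=1$ the complex is concentrated in degrees $0$ and $1$, where it is, up to sign, multiplication by $X$, namely $\smooth(\RR)\xrightarrow{\;\cdot X\;}\smooth(\RR)$. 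The top homology $\{f:Xf=0\}$ vanishes because $X$ is nonzero away from the origin, so $Xf=0$ forces $f$ to vanish off the origin and hence, by continuity, everywhere. For the bottom homology, Hadamard's lemma writes every $f$ as $f(x)=f(0)+x\,g(x)$ with $g$ smooth, so the image $X\smooth(\RR)$ equals the closed subspace $\{f:f(0)=0\}$ and evaluation at the origin induces $\maH_0\cong\CC$.

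Finally I would invoke the K\"unneth theorem. Each one--variable factor has homology equal to $\CC$ in degree $0$ and $0$ otherwise; being finite--dimensional, with closed image (so the homology is Hausdorff), these are exactly the hypotheses under which the topological K\"unneth formula for complexes of nuclear Fr\'echet spaces applies and all higher torsion terms vanish. Hence the completed tensor product has homology $\CC$ concentrated in degree $0$, as claimed. The main obstacle is analytic rather than formal: the regular--sequence behaviour that is automatic for polynomials must here be supplied by Hadamard's lemma in the $n=1$ step, and the passage to general $n$ relies on the topological K\"unneth theorem together with the identification of $\smooth(\RR^n)$ as a completed tensor power, so the nuclearity and closed--image considerations in the Fr\'echet category are where the care is required.
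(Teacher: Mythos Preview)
Your argument is correct, but it proceeds rather differently from the paper's own proof.

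For part~\ref{Kaszul1} the paper simply asserts that the Koszul differential arises from a simplicial module structure and invokes the Eilenberg--Zilber theorem. Your route is the standard elementary one: use the canonical graded-algebra isomorphism $\wedge^*(\CC^q\oplus\CC^{q'})\cong\wedge^*\CC^q\otimes\wedge^*\CC^{q'}$ and check that the two differentials, being $R\otimes R'$--linear graded derivations agreeing on degree-one generators, coincide. This is more transparent and avoids having to justify a simplicial structure on the Koszul complex.

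For part~\ref{Kaszul2} the paper argues in one stroke for all $n$, appealing to ``the Poincar\'e lemma and properties of the Fourier transform'': the idea is that, identifying $v_i$ with $dX_i$, the Koszul differential becomes interior product with the Euler vector field, and a Poincar\'e-type contracting homotopy gives acyclicity directly (Fourier transform serves as the heuristic bridge between multiplication by $X_i$ and differentiation). Your approach instead factors through part~\ref{Kaszul1}: reduce to $n=1$ via the tensor decomposition $\smooth(\RR^n)\cong\smooth(\RR)^{\hat\otimes n}$, settle the one-variable case by Hadamard's lemma, and reassemble with the topological K\"unneth formula. This is longer but makes the analytic content explicit, and you are right to flag that the passage to completed tensor products and the Fr\'echet K\"unneth theorem (closed image, Hausdorff homology, nuclearity) is where the care lies; the paper's sketch leaves these points implicit. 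Either argument is adequate for the use made of the lemma later in Section~\ref{section-LAR}.
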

\begin{proof}
 Since the  differential  comes from a simplicial object structure  the first fact is a consequence of the Eilenberg-Zibler theorem.

 The second fact follows from Poincare lemma and properties of Fourier transform.
\end{proof}

\subsection{Linear Action on $\RR^n$}\label{sub-LAR}
Given a linear transformation  $\gamma $ of a real vector space $V$ we decompose $V$ into fixed point subspace $V^{\gamma}$ and an invariant complement $(1-\gamma)V$,
\[V=V^{\gamma}\oplus (1-\gamma)V.\]

We  come now to the main result of this section which is the local version of our desired result.
\begin{theo}
Let $\gamma$ be a linear automorphism of the algebra
$\maA=\smooth(V)$. The homology of the twisted complex $
(C_*(A)_{\gamma}, b_{\gamma})$ is then given by the space of forms
on the fixed point $V^{\gamma}$
\[H_q(C_*(A)_{\gamma})\simeq\Omega^q(V^{\gamma}),\]
 and the identification is $\gp$ equivariant.
\end{theo}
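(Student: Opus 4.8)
The plan is to compute the twisted homology through the $Tor$ description of Lemma~\ref{Tor} and a twisted Koszul resolution of the diagonal, reducing everything to the two properties of Koszul complexes recorded in Lemma~\ref{lem_kas}. By Lemma~\ref{Tor} we have $H_q(\maC_*(\maA)_\gamma)=Tor^{\maA^e}_q(\maA,\maA_\gamma)$, so it suffices to resolve the diagonal bimodule $\maA$ by free $\maA^e$-modules and tensor with $\maA_\gamma$. Fix linear coordinates $x_1,\ldots,x_n$ on $V$ and set $u_i:=x_i\otimes 1-1\otimes x_i\in\maA^e$. Under the identification $\maA^e=\smooth(V)\,\widehat\otimes\,\smooth(V)=\smooth(V\times V)$ the element $u_i$ is the function $(v,w)\mapsto v_i-w_i$, so the $u_i$ cut out the diagonal; I would use Lemma~\ref{lem_kas}, applying the multiplicativity (\ref{Kaszul1}) together with the acyclicity (\ref{Kaszul2}) after the linear change of coordinates $(x,y)\mapsto(x-y,y)$, to conclude that $\maK_*(\maA^e:\{u_i\})$ is a free $\maA^e$-resolution of $\maA$.

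Next I would compute $\maK_*(\maA^e:\{u_i\})\otimes_{\maA^e}\maA_\gamma$. Tensoring the free modules $\maA^e\otimes\wedge^l\CC^n$ over $\maA^e$ with $\maA_\gamma$ yields $\maA\otimes\wedge^l\CC^n$ in each degree, and the induced differential is multiplication by the image of $u_i$ in $\maA_\gamma$. From the module structure $(a\otimes b)\cdot c=ac\,\gamma(b)$ one computes $u_i\cdot c=(x_i-\gamma(x_i))\,c$, so the resulting complex is exactly the Koszul complex $\maK_*(\maA:\{g_i\})$ with $g_i:=(1-\gamma)x_i\in\maA$. Thus $Tor^{\maA^e}_*(\maA,\maA_\gamma)=H_*(\maK_*(\maA:\{g_i\}))$, and the whole problem reduces to the homology of a Koszul complex of linear functions.

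The key computation uses the decomposition $V=V^\gamma\oplus(1-\gamma)V$. The linear functions $g_i=(1-\gamma)x_i$ vanish on $V^\gamma$, and their span is exactly the space of linear functions supported on $W:=(1-\gamma)V$, since $1-\gamma$ is invertible on $W$ and zero on $V^\gamma$. After a linear change of the indexing basis of $\CC^n$ I would arrange the generating set in the form $\{0,\ldots,0\}\coprod\{z_1,\ldots,z_m\}$, where $k=\dim V^\gamma$ generators are zero and $z_1,\ldots,z_m$ ($m=\dim W$) are coordinate functions on $W$. Writing $\maA=\smooth(V^\gamma)\,\widehat\otimes\,\smooth(W)$ and invoking (\ref{Kaszul1}), the complex factors as $\maK_*(\smooth(V^\gamma):\{0^k\})\otimes\maK_*(\smooth(W):\{z_b\})$. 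The first factor has zero differential, hence homology $\smooth(V^\gamma)\otimes\wedge^*\CC^k$, while the second is acyclic with homology $\CC$ in degree $0$ by (\ref{Kaszul2}). By the K\"unneth theorem the total homology in degree $q$ is $\smooth(V^\gamma)\otimes\wedge^q\CC^k=\Omega^q(V^\gamma)$, which is the claimed identification.

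For $\gp$-equivariance I would observe that every step is natural: the element $u_i$ arises from the intrinsic diagonal, the decomposition $V=V^\gamma\oplus(1-\gamma)V$ is canonical and preserved by the centralizer $\gp_\gamma$ (any $g$ commuting with $\gamma$ carries $V^\gamma$ to itself), so the resulting isomorphism with $\Omega^*(V^\gamma)$ intertwines the induced $\gp_\gamma$-actions. The main obstacle I anticipate is the middle passage: correctly identifying the tensored differential as multiplication by $(1-\gamma)x_i$, and, above all, reorganizing the \emph{linearly dependent} generators $g_i$ into the split form $\{0^k\}\coprod\{z_b\}$ so that the multiplicativity of Koszul complexes applies, together with the care needed to run the tensor-product and K\"unneth arguments in the completed (nuclear Fr\'echet) topological category rather than the purely algebraic one.
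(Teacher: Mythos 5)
Your proposal is correct and follows essentially the same route as the paper: reduce to $Tor^{\maA^e}_*(\maA,\maA_\gamma)$ via Lemma~\ref{Tor}, resolve the diagonal by the Koszul complex on $\{X_i\otimes 1-1\otimes X_i\}$, identify the tensored complex with $\maK_*(\maA:\{X_i-\gamma(X_i)\})$, and split it along $V=V^\gamma\oplus(1-\gamma)V$ using both parts of Lemma~\ref{lem_kas}. The only cosmetic difference is that the paper chooses a basis adapted to the decomposition from the outset, whereas you work with arbitrary coordinates and then reorganize the linearly dependent generators $(1-\gamma)x_i$ by an invertible linear change — both are fine.
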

\begin{proof}
Let ${e_i}$ be a basis of $V$ such that  $e_i\in V^{\gamma}$ for  $1\leq i\leq m$ and  $e_i\in (1-\gamma)V$ for $m+1\leq i\leq n$.  Let $X_i$ denote the corresponding coordinate functions.
 
 Consider the
Koszul complex,
\[\maK_*(\maA^e:\{X_i\otimes1-1 \otimes  X_i\}_{i=1}^n)\]
which is a projective resolution of $\maA$ over $\maA^e$. And hence
by Lemma \ref{Tor},
\[\maC_*(\maA)_{\gamma} \simeq _{qi} \maK_*( \maA^e:\{X_i\otimes1-1 \otimes
X_i\}_{i=1}^n)\otimes_{\maA^e} \maA_{\gamma}.\]

Observe that $\maK_*(\maA^e:\{X_i \otimes 1-1 \otimes
X_I\}_{i=1}^n) \otimes_{\maA^e} \maA_{\gamma}$ can be identified
with another Koszul complex namely, $\maK_*(\maA: \{
X_i-\gamma(X_i)\})$ via the map
\[\psi(a\otimes b \otimes
v_{i_1}\wedge\ldots \wedge v_{i_l}\otimes c)= a\gamma(b)c\otimes
v_{i_1}\wedge \ldots \wedge v_{i_l}.\] 

This results in the following
diagram.
\begin{center}
  $ \xymatrix { {\ldots} \ar[r] & {(\maA^e\otimes \bigwedge ^lV)\otimes
      _{\maA^e} \maA_{\gamma}} \ar[r]^{\delta \otimes 1 } \ar[d]
    &{(\maA^e\otimes \bigwedge ^{l-1}V)\otimes _{\maA^e} \maA_{\gamma}}
    \ar[d]\ar[r] &{\ldots} \\
    {\ldots} \ar[r] &{(\maA\otimes \bigwedge ^lV)}
    \ar[r]^{\delta} & {(\maA\otimes \bigwedge ^{l-1})} \ar[r]&{\ldots}\\
  } $
\end{center}

Let $R=C^{\infty}(V^{\gamma})$ be smooth functions on the fixed
point manifold $V^{\gamma}$ and let $R'= C^{\infty}((1-\gamma)V)$ be smooth
functions on
the invariant compliment of $V^{\gamma}$.

Since $\maA=C^{\infty}(V)= C^{\infty}(V^{\gamma}) \otimes
C^{\infty}((1-\gamma)V)=R\otimes R'$, by Lemma \ref{lem_kas}.\ref{Kaszul1},
\begin{multline*}
\maK(\maA:\{X_i-\gamma(X_i)\}_{i=1}^{n+q})=\maK(R:\{X_i-\gamma(X_i)\}_{i=1}^m) \otimes \\
 \maK(R': \{X_i-\gamma(X_i)\}_{i=m+1}^{n+q})\\
\simeq \maK(R:\{0\}) \otimes
 \maK(R': \{X_i\}_{i=m+1}^{m+q}).
\end{multline*}
Now by applying Lemma \ref{lem_kas}.\ref{Kaszul2} to $\maK(R':
\{X_i\}_{i=m+1}^{m+q})$, we have
\begin{eqnarray}\label{linear twisted complex} \maK(\maA:
  \{X_i-\gamma(X_i)\}_{i=1}^{n+q})=\maK(R:\{0\})
  \simeq\Omega(V^{\gamma}).
\end{eqnarray}

Here the last identification is due to the following theorem, 
which we formulate only in the $\sigma$--compact case, for 
simplicity.

\begin{theo} [Connes' HKR Theorem] \label{HKR}
Let $X$ be a smooth, $\sigma$--compact manifold.  Then the Hochschild
homology of the algebra $\maA =\smooth(X)$ is given by the
differential forms on $X$. The map
\[\chi_k(a_0\otimes a_1 \otimes  \ldots \otimes  a_k) 
\rightarrow a_0da_1da_2 \ldots da_k\]
induce4s an isomorphism
\[HH_k(\smooth(X)=\Omega^k(X)\]
\end{theo}
\begin{proof}
Let us consider $X=\mbR^n$, which is sufficient to prove the
equality in \ref{linear twisted complex}. It is easily seen that the
map $\chi$ is a chain map. We define the inverse map (which is only
well define on the homology)
\[E_*:\Omega^*(\mbR^n)\rightarrow \maH_*(\smooth(\mbR^n))\]
\[E_k(a_0da_1da_2\ldots da_k) = \frac{1}{k!}\sum_{\pi \in S_k} 
sign(\pi)(a_0\otimes a_{\pi(1)}\otimes a_{\pi(2)} \otimes \ldots
\otimes a_{\pi(k)}).\] Then the following diagram commutes.
\begin{equation*}
\xymatrix{ {\Omega^k(\mbR^n)}\ar@<0.5ex>[r]^{\chi_k} \ar[d]^0 &
{\maH_k(\maA)} \ar[d]^b \ar@<0.5ex>[l]^{E_k}\\
{\Omega^{k-1}(\mbR^n)}\ar@<0.5ex>[r]^{\chi_{k-1}} &
{\maH_{k-1}(\maA).}
\ar@<0.5ex>[l]^{E_{k-1}} }
\end{equation*}
Then the map $\chi_k$ induces an isomorphism
\[HH_k(\smooth(\mbR^n))\simeq \Omega^k(\mbR^n).\]

The case of a closed manifold $X$ will be proved in the next
section.
\end{proof}
Thus combining all this information we get the following diagram.
\begin{equation*}\xymatrix {
{\maC_*(\maA)_{\gamma}} \ar[r]\ar[d]^{\sigma}
&{\maH_*(\maA)\otimes_{\maA^e}\maA_{\gamma}} \ar[r]
&{Tor^{\maA^e}(\maA,\maA_{\gamma})} \ar[ld]\\
{\Omega^*(V^{\gamma})} & {\maK(\maA^e:\{X_i\otimes 1-1\otimes X_i\})} \ar[l]\\
} 
\end{equation*}
We note that all the maps involved are $\gp$ equivariant.
\end{proof}
The following result now follows immediately by Proposition
\ref{cp}.
\begin{cor}\label{fixed_point}
Let $\gp$ be a finite group acting linearly on $\maA=\smooth(V)$ and
  $\maB=\maA\rtimes \gp$. And let $\gamma\in \gp.$ Then the $\langle \gamma
  \rangle$ component of Hochschild homology $HH(\maB)_{\gamma}$ is given by
\[HH_*(\maB)_{\gamma}=\Omega(V^{\gamma})^{\gp_{\gamma}}.\]
\end{cor}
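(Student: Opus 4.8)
The plan is to assemble the corollary from results already established in the excerpt, treating the conjugacy-class decomposition and the linear model theorem as black boxes. First I would invoke the natural decomposition of Hochschild homology over conjugacy classes, which gives
\[
HH_*(\maB)_{\gamma} = \maH_*(\maA)_{\ip{\gamma}}.
\]
Then, because $\maA = \smooth(V)$ is commutative and $\gp$ acts on it (with $\gamma \in \gp$ a linear automorphism), Proposition \ref{cp} identifies this conjugacy component with the $\gp_{\gamma}$-invariants of the twisted complex, namely
\[
\maH_*(\maA)_{\ip{\gamma}} \simeq \maC_*(\maA)_{\gamma}^{\gp_{\gamma}}.
\]
This is precisely the quasi-isomorphism furnished by the explicit chain maps $F$ and $G$ constructed there, so nothing new must be proved at this step.

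Next I would pass the $\gp_{\gamma}$-invariants through the main theorem of Section \ref{sub-LAR}. That theorem identifies the homology of the twisted complex $(\maC_*(\maA)_{\gamma}, b_{\gamma})$ with the differential forms $\Omega^*(V^{\gamma})$ on the fixed-point subspace, and — crucially — asserts that this identification is $\gp$ equivariant. Because the identification intertwines the two $\gp_{\gamma}$-actions, taking invariants on both sides is legitimate and commutes with passing to homology (here one uses that $\gp_{\gamma}$ is finite, so averaging over the group is exact and invariants commute with taking homology). Thus
\[
H_*\bigl(\maC_*(\maA)_{\gamma}^{\gp_{\gamma}}\bigr) \simeq \Omega^*(V^{\gamma})^{\gp_{\gamma}},
\]
and chaining the isomorphisms yields the stated formula $HH_*(\maB)_{\gamma} = \Omega(V^{\gamma})^{\gp_{\gamma}}$.

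I expect the only genuine subtlety to be the interchange of invariants with homology. One must confirm that the quasi-isomorphism of the main theorem is $\gp_{\gamma}$-equivariant at the chain level (not merely on homology), so that applying the exact averaging projector $\frac{1}{|\gp_{\gamma}|}\sum_{g \in \gp_{\gamma}} g$ produces a quasi-isomorphism between the invariant subcomplexes. Since $\gp_{\gamma}$ is finite and we work over $\CC$, the functor of taking $\gp_{\gamma}$-invariants is exact, so it preserves quasi-isomorphisms and commutes with homology; this is the one point I would state carefully rather than leave implicit. Everything else is a direct composition of the already-proven equivalences, so the corollary follows immediately by Proposition \ref{cp} together with the equivariance noted at the end of the preceding theorem's proof.
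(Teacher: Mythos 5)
Your proposal is correct and follows the same route as the paper, which simply states that the corollary "follows immediately by Proposition \ref{cp}" combined with the preceding theorem's $\gp$-equivariant identification of $H_*(\maC_*(\maA)_\gamma)$ with $\Omega^*(V^\gamma)$. The one point you spell out explicitly — that taking $\gp_\gamma$-invariants over $\CC$ is exact and hence commutes with homology — is left implicit in the paper but is exactly the justification needed.
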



\section{Localization with Groupoids and Sheaf}\label{section-LGS}
We return to the situation where $\maA=\smooth(M)$ si athe algebra of smooth functions over a closed manifold $M$ and $\gp$ acts by  diffeomorphisms.
In this section we shall obtain the ``normalized complex''  for the Hoschild homology of our cross-product algebra $\smooth(M)\rtimes  \gp$. To this end we identify  the cross-product algebra as usual to the convolution algebra over the  transformation groupoid $M
\rtimes \gp$. The normalized complex thus provides a complex of sheaves  that reduces the calculation of Hoschild homology  to the previous example of linear action on euclidian space.

Recall that the transformation groupoid as a space is just
$G=M\times \gp:=\{(x,g)|x\in M \, ,\,g\in\gp\}$ with units $M$ and the source, the
range and the composition maps given by
\[ s(x,g)=g^{\!-\!1}.x ~~~~~\text{and}~~~~~  r(x,g)=x\]
\[(x,g).(y,h)=(x,gh) \hskip.3in \text{when }\,\, x=g.y\]
and the inverse defined by
$(x,g)^{\!-\!1}:=(g^{\!-\!1}x,g^{\!-\!1})$. The convolution algebra
of $G$ is the space $\smooth(G)$ with the product given by
\[f_1*f_2(x,g):=\sum_{h\in \gp} f_1(x,h).f_2(( x,h)^{\!-\!1}.(x,g))\]

We can identify the cross-product algebra $\maB=\smooth(M)\rtimes
\gp$ with the convolution algebra $\smooth(G)$, by the map
$$\smooth(G)\ni f\!\mapsto\!\sum_{g\in \gp}f_gg\in \smooth(M)\rtimes
\gp$$ with $f_g(x)=f(x,g)$.

This leads to another description of the Hochschild and cyclic
complexes for $\maB$. First define $\Phi :\maB^{\otimes n}\simeq
\!\smooth(G)^{\otimes n}\!\rightarrow\! \smooth(G^n)$ as a map of nuclear 
Fr\'ech\`et algebras by
\[\Phi(f_0\otimes f_1\otimes \ldots
\otimes f_{n-1})(a_0,a_1,\ldots a_{n-1})=f_0(a_0).f_1(a_1)\ldots
f_{n-1}(a_{n-1}),\]
where $f_i \in \smooth(G)$ and $a_i:=(x_i,g_i)\in
G$. Then $\Phi$ is an isomorphism \cite{Grothendieck}. With
this in mind, the Hochschild differential takes the form
\begin{multline*}
  b(F)(a_0,a_1,\ldots ,a_{n-1}) \\
  = \sum_{i=0}^{n-2}(-1)^i\sum_{\gamma\in \gp} F(a_0,a_1,\ldots
  ,a_{i-1},(x_i,\gamma),
  (\gamma^{\!-\!1}x_i,\gamma^{\!-\!1}g_i),a_{i+1},\ldots ,a_{n-1})\\
  +(-1)^{n-1}\sum_{\gamma\in \gp}F((\gamma^{-1}x_o,\gamma^{-1}g_0),a_1,\ldots,a_{n-1},
  (x_o,\gamma)).
\end{multline*}
It turns out (see \cite{Br-Ni}) that the normalized complex for the convolution algebra $\smooth(G)$   provides a complex of sheafs of germs of smooth functions over certain ``loop spaces''. To observe this let us denote by  $B^{(n)}\subset
G^{n+1}:=\{(a_0,a_1,\ldots,a_n)\,|\,s(a_i)=r(a_{i+1}) \forall 0\leq
i\leq n\}$  the space of loops in $G^{n+1}$ Here and through this section we shall use the convention $n+1=0$ while talking about the  loops space $B^{(n)}$.
For example, $B^{(0)}=\{(x,g)\in G|\,g^{-1}x=x\}$ is the disjoint
union of the fixed point manifolds.

 Lets
denote by $\maI_n$ the submodule (in fact an ideal) of
$\smooth(G^{n+1})$ defined by
\[\maI_n=\{f\in\, \smooth(G^{n+1})~|\hskip 0.2in \text{supp}(f)\cap
B^{(n)}=\emptyset\}.\] Then we have the following result.
\begin{prop}\label{sheafy}
The complex $(\maI_n,b)$ is an acyclic subcomplex of $\maH_n(\maB)$
and hence the quotient map $\maB^{\otimes n+1}\rightarrow
\maB^{\otimes n+1}/\maI_n$ is a quasi-isomorphism.
\end{prop}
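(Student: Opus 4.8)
The plan is to establish two facts about $\maI_*$ — that it is a subcomplex of the Hochschild complex $\maH_*(\maB)$, and that it is acyclic — after which the stated quasi-isomorphism is purely formal. Once $\maI_*$ is a subcomplex, the inclusion gives a short exact sequence of complexes
\[
0 \longrightarrow (\maI_*,b) \longrightarrow (\maB^{\otimes \bullet+1},b) \longrightarrow (\maB^{\otimes \bullet+1}/\maI_*,b) \longrightarrow 0,
\]
whose long exact sequence in homology shows that $H_*(\maI_*)=0$ is exactly equivalent to the quotient map being a quasi-isomorphism. So the entire content lies in the two properties of $\maI_*$.

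First I would verify that $b(\maI_n)\subseteq\maI_{n-1}$. Each summand of $b(F)(a_0,\ldots,a_{n-1})$ evaluates $F$ at the tuple obtained by replacing a single $a_i=(x_i,g_i)$ by one of its factorizations $a_i=(x_i,\gamma)\cdot(\gamma^{-1}x_i,\gamma^{-1}g_i)$, the wrap-around term doing the same at the cyclic junction. The point is that these insertions are compatible with the loop condition: a direct check of the source and range maps shows that whenever $(a_0,\ldots,a_{n-1})\in B^{(n-1)}$, every inserted $(n{+}1)$-tuple lies in $B^{(n)}$, because the two new links $(x_i,\gamma)$ and $(\gamma^{-1}x_i,\gamma^{-1}g_i)$ are composable and reproduce the original composabilities at their outer ends. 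Since $F$ vanishes on a neighborhood of the closed set $B^{(n)}$ and the insertion maps are continuous, $b(F)$ vanishes on a neighborhood of $B^{(n-1)}$, hence $b(F)\in\maI_{n-1}$.

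The main work is the acyclicity of $(\maI_*,b)$. The natural first attempt is the extra degeneracy $s_{-1}F=u\otimes F$ prepending the unit $u(x,g)=\delta_{g,e}$ of $\maB$; the same composability bookkeeping shows $s_{-1}$ maps $\maI_n$ into $\maI_{n+1}$, and $s_{-1}$ contracts the non-cyclic differential $b'$, i.e. $b's_{-1}+s_{-1}b'=\mathrm{id}$. The obstruction to contracting the full $b=b'+(-1)^n d_n$ is the wrap-around face: a short computation identifies $d_{n+1}s_{-1}$ with the cyclic rotation operator $t$, so $s_{-1}$ alone only yields $bs_{-1}+s_{-1}b=\mathrm{id}+(-1)^n(s_{-1}d_n-t)$. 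The remedy, which I expect to be the crux, is to insert the unit not always at the front but at a link where the loop is genuinely broken: the complement $G^{n+1}\setminus B^{(n)}$ is covered by the open sets $V_i=\{s(a_i)\neq r(a_{i+1})\}$, $i=0,\ldots,n$, and on functions supported in $V_i$ an insertion of the unit at position $i$ kills the wrap-around defect. Using a $\gp$-invariant partition of unity subordinate to $\{V_i\}$, I would patch these localized homotopies into a global contracting homotopy on $\maI_*$.

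A cleaner alternative is to decompose $\smooth(G^{n+1})=\bigoplus_{(g_0,\ldots,g_n)}\smooth(M^{n+1})$ over $\gp^{n+1}$ and note that on each component $B^{(n)}$ is the graph of the relations $x_{i+1}=g_i^{-1}x_i$, a twisted copy of the diagonal; then the off-loop acyclicity reduces to the classical fact that the Hochschild complex of $\smooth$ of a manifold localizes on the diagonal, proved by a geodesic contraction toward $B^{(n)}$. In either route the delicate point is to arrange the contracting homotopy to be simultaneously $\gp$-equivariant, compatible with all simplicial faces including the cyclic one, and support-decreasing toward the loop locus; this is where I expect the genuine effort to lie, and it is essentially the localization mechanism of \cite{Br-Ni}. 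With acyclicity established, the long exact sequence above completes the proof.
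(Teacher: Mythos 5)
Your overall strategy matches the paper's: you reduce everything to showing $(\maI_*,b)$ is an acyclic subcomplex, you identify the cover of $G^{n+1}\setminus B^{(n)}$ by the opens $V_i=\{s(a_i)\neq r(a_{i+1})\}$, and you contract chains supported in $V_i$ by inserting the unit at the broken link $i$ (which is exactly what kills the wrap\mbox{-}around defect of the naive extra degeneracy $s_{-1}$). The paper's proof is precisely this, organized differently at the last step: instead of patching the local homotopies with a partition of unity, it filters $\maI_n$ by $F_i=\sum_{j=0}^{i}\{f\mid \operatorname{supp}(f)\subseteq \maU_j^{n+1}\}$ and observes that the associated graded complex $E^0=\bigoplus F_{i+1}/F_i$ is acyclic (each graded piece is contracted by the single insertion at the one newly broken link), whence the finite filtration forces $H_*(\maI_*)=0$.

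The one place where your write\mbox{-}up, taken literally, would not go through is the patching itself. If $h_i$ contracts $b$ on chains supported in $V_i$, the operator $\sum_i\phi_i h_i$ is not a contracting homotopy: $b$ does not commute with multiplication by the cutoffs $\phi_i$, and the cross terms $b(\phi_i h_i)-\phi_i b h_i$ do not cancel. The standard repair is exactly the paper's filtration (equivalently, induction on the number of opens $V_0,\dots,V_i$ needed to support a chain, killing one graded piece at a time), or a \v{C}ech/Mayer--Vietoris double complex; since you already have the right cover and the right local contraction, this is a reorganization of your argument rather than a new idea, but it is the step that turns your sketch into a proof. Your alternative route via the decomposition over $\gp^{n+1}$ and localization of the Hochschild complex of $\smooth(M^{n+1})$ on the twisted diagonal is also viable and closer in spirit to \cite{Br-Ni}, but it likewise ends up needing the same support\mbox{-}filtration device, so it buys no real savings here.
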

\begin{proof}
Let $\maU_j^{n+1}\subset G^{n+1}=\{(a_0,a_1,\ldots a_n) |s(a_i)\neq
r(a_{i+1})\}$ be a sequence of open sets. This yields an filtration
of the complex $\maB^{\otimes n+1}$ as
\[F_i=F_i\maB^{\otimes n+1}=\sum_{j=0}^i\{f~|~\text{supp}(f)\subseteq
  \maU_j^{n+1}\}.\]
Also, let $F_i\maB^{\otimes n+1}:=F_n\maB^{\otimes n+1}$ for
  $i>n$. Then $F_i$ is a filtration on the complex $(\maI_n,b)$. We can check that in the corresponding spectral sequence $E^0_n=(\oplus
  F_{i+1}/F_i,b)$ is acyclic.
\end{proof}
Thus the Hochschild chains are reduced from functions on $G^{n+1}$
to germs of function near $B^{(n)}$. Denote by
$i_n:B^{(0)}\rightarrow G^{n+1}$ the loop inclusion map
$(x,g)\rightarrow ((g^{\!-\!1}x,e),(g^{\!-\!1}x,e),\ldots
(g^{\!-\!1}x,e))$. Let
\[\maF_n=i_n^{\!-\!1}\mathcal{C}^{\infty}(G^{n+1})\]
be the pull-back
of the sheaf of smooth functions on $G^{n+1}$ to $B(n)$.. 

To make a more geometrical identification  let us benote by  the projection
$\pi:B^{(n)}\rightarrow B^{(0)}$ defined by
\begin{align*}
\pi((x_0,g_0),(x_1,g_1),\ldots ,(x_n,g_n))&=(x_0,g_0).(x_1,g_1)
\cdots (x_n,g_n)\\
&=(x_0,g_0.g_1\ldots g_n)
\end{align*}
is a local homeomorphism.
It is in fact a diffeomorphism on when restricted to each connected
component. For a fixed $u=(g_0,g_1,\ldots, g_n)\in \gp^{n+1}$, let
$\gamma=g_0g_1\ldots g_n$ and let $B^{(n)}_u$ denote the component
of $B^{(n)}$ consisting of $\{(x_0,g_0),(x_1,g_1),\ldots,
(x_n,g_n)\in B^{(n)} \}$. Then $\pi$ maps $B^{(n)}_u$
diffeomorphically onto $B^{(0)}_{\gamma}\backsimeq M^{\gamma}$. Noting again that $B^{(0)}$ is
just the disjoint union of the fixed point manifolds, we have the
following.
\begin{prop}\label{global_sect}
The pull back of the sheaf $\maF_n$ under the local diffeomorphism
$\pi:B^{(n)}\rightarrow B^{(0)}$ gives an isomorphism
\[\gp(B^{(n)},\pi^{\!-\!1}\maF_n)=\maB^{\otimes n+1}/\maI_n,\]
as vector spaces.
\end{prop}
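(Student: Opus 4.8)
The plan is to realise both sides as spaces of germs of smooth functions on $G^{n+1}$ along closed submanifolds, and then to compare them sheet by sheet using the diffeomorphism $\pi$. Throughout write $j\colon B^{(n)}\hookrightarrow G^{n+1}$ for the inclusion and recall that, via $\Phi$, the right-hand side is $\maB^{\otimes n+1}/\maI_n\cong\smooth(G^{n+1})/\maI_n$.

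First I would identify $\smooth(G^{n+1})/\maI_n$ with the global sections $\gp(B^{(n)},j^{-1}\smooth(G^{n+1}))$ of the germ sheaf of smooth functions along $B^{(n)}$. Since $s$ and $r$ are submersions, the loop conditions $s(a_i)=r(a_{i+1})$ cut out $B^{(n)}$ as a closed embedded submanifold. The restriction-to-germ map $\smooth(G^{n+1})\to\gp(B^{(n)},j^{-1}\smooth(G^{n+1}))$ is then surjective: a germ represented on an open neighbourhood $U\supseteq B^{(n)}$ is realised globally by multiplying its representative by a smooth function equal to $1$ near $B^{(n)}$ and supported in $U$. Its kernel is exactly $\maI_n$, because a function has zero germ along $B^{(n)}$ iff it vanishes on a neighbourhood of $B^{(n)}$ iff its support misses $B^{(n)}$.

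Next, since $\gp$ is finite, $G^{n+1}=M^{n+1}\times\gp^{n+1}$ is a finite disjoint union of sheets indexed by $u=(g_0,\dots,g_n)$, and $B^{(n)}=\coprod_u B^{(n)}_u$ splits accordingly; hence germs along $B^{(n)}$ are the direct sum over $u$ of germs along $B^{(n)}_u$. On the other side, because $\pi$ restricts to a diffeomorphism $B^{(n)}_u\xrightarrow{\;\sim\;}B^{(0)}_{\gamma}$ with $\gamma=g_0\cdots g_n$, the pullback sheaf gives $\gp(B^{(n)},\pi^{-1}\maF_n)=\bigoplus_u\gp(B^{(0)}_{\gamma},\maF_n)$, and by the definition $\maF_n=i_n^{-1}\smooth(G^{n+1})$ each summand is the space of germs of smooth functions along the diagonal image $i_n(B^{(0)}_{\gamma})=\{(x,\dots,x):x\in M^{\gamma}\}$ inside the $e$-sheet. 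Thus it suffices to match, for each $u$, germs along $B^{(n)}_u$ with germs along $i_n(B^{(0)}_{\gamma})$.

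The key step, which I expect to be the main obstacle since germs along a submanifold depend on the ambient embedding and not merely on the diffeomorphism type of the submanifold, is to produce an ambient diffeomorphism realising this matching compatibly with $\pi$. Here I would use the diffeomorphism $\Theta_u$ from the $u$-sheet onto the $e$-sheet defined by $(y_0,y_1,\dots,y_n)\mapsto(y_0,\,g_0y_1,\,g_0g_1y_2,\,\dots,\,g_0\cdots g_{n-1}y_n)$. A direct computation, using that a point of $B^{(n)}_u$ satisfies $x_i=(g_0\cdots g_{i-1})^{-1}x_0$ with $x_0\in M^{\gamma}$, shows that $\Theta_u$ carries $B^{(n)}_u$ onto $i_n(B^{(0)}_{\gamma})$ and intertwines the two maps, $\Theta_u\circ j|_{B^{(n)}_u}=i_n\circ\pi|_{B^{(n)}_u}$. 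Pullback along $\Theta_u$ therefore yields an isomorphism of germ sheaves $(i_n\circ\pi)^{-1}\smooth(G^{n+1})\xrightarrow{\;\sim\;}j^{-1}\smooth(G^{n+1})$ over $B^{(n)}_u$, hence on sections identifies germs along $i_n(B^{(0)}_{\gamma})$ with germs along $B^{(n)}_u$. Summing over the finitely many $u$ assembles these into the desired isomorphism $\gp(B^{(n)},\pi^{-1}\maF_n)\cong\maB^{\otimes n+1}/\maI_n$; since every map used is natural, the identification is moreover compatible with the $\gp$-action, as needed later.
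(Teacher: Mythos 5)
Your argument is correct and follows essentially the same route as the paper: your sheet-wise translation diffeomorphism $\Theta_u$ is exactly the globalization of the paper's local identification $W_u\cong V^{n+1}$ of a neighborhood of a loop point with a product neighborhood, and both proofs then match germs along $B^{(n)}$ with sections of $\pi^{-1}\maF_n$. Your write-up is more explicit about the step $\smooth(G^{n+1})/\maI_n\cong\gp(B^{(n)},j^{-1}\smooth(G^{n+1}))$, which the paper only sketches, but the underlying mechanism is identical.
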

\begin{proof}
Every point $u\in B^{(n)}$ has a neighborhood $W_u$ in $G^{n+1}$
which is diffeomorphic to $V^{n+1}$ for some Small enough
neighborhood $V$ of $s(a_n)=g_n^{\!-\!1}a_n$ in $G$. One possible
identification can be $W_u=(V,g_0)\times (g_0V,g_1)\times\ldots
\times (g_0g_1\ldots g_{n-1}V,g_n)$. 
Then this diffeomorphism gives a map $\psi:\smooth(V^{n+1})\rightarrow
\smooth(W_u)$. Thus after covering $B^{(n)}$ with small enough
neighborhoods like $W_u$, would match the global sections in
$\gp(B^{(n)},\pi^{\!-\!1}\maF_n)$ to functions smoothly supported in
some neighborhood of $B^{(n)}$ in $G^{n+1}$.
\end{proof}

Using the proposition above, one may obtain a sheafified version of
Proposition \ref{cp} with an induced differential on $\maF_n$.
\begin{remark}
At each $a=(x,\gamma)\in B^{(0)}$, the map
$\phi:\smooth(V^{n+1})\rightarrow\smooth(W)$ defined above induces an
isomorphism $\phi_a$ on the stalks $(\maF_n)_a\simeq
\smooth(M)_{\gamma^{\!-\!1}x}^{\otimes n+1}$ to the stalk
$(\smooth(G^{n+1}))_{(a,a\ldots a)}$ of the sheaf of smooth
functions which are germs of smooth functions on $G^{n+1}$ at
$(a,a,a\ldots,a)$. And this isomorphism on the stalk is given by
the formula
\[\phi_a(f_0\otimes f_1\otimes \ldots \otimes f_n)
\rightarrow(f_0\gamma\otimes f_1e\otimes \ldots \otimes f_ne). \]
\end{remark}
Under $\phi$ at every point $(x,\gamma)\in B^{(0)}$ the stack
$(\maF_n)_{(x,\gamma)}$ have a simplicial structure given by the
differential $b_{\gamma}$ from Equation (\ref{twisted}) and the map
$\phi$ is a chain map. Using Corollary \ref{fixed_point}, we can
compute the homology of each stalk. In the following section, we study
the properties of sheaves of complexes for which a quasi-isomorphism
on stalks give global quasi-isomorphisms on complexes of global
sections.

\subsection{Sheaves}\label{sub-S}

Let $X$ be a compact Hausdorff space. A sheaf $\mathcal{S}$ on $X$
is called \textit{flabby} if for any open set $U \subseteq X$, the
restriction map
$$\textrm{res}_{XU}:\Gamma(X,\mathcal{S})~\longrightarrow~\Gamma(U,\mathcal{S})$$ is
surjective.

Let $\maK$ be a sheaf of unital algebras on $X$. In particular, we
assume that the restriction maps are unital ($1 ~\in~
\Gamma(X,\maK)$ and the image under restriction
$\textrm{res}_{XU}(1)$ is a unit in $\Gamma(U,\maK)$). We say that
the sheaf $\maK$ has a partition of unity, if for any locally finite
cover $U_{\alpha}$ of $X$ there exists
$\Phi_{\alpha}~\in~\Gamma(X,\maK)$ such that
\begin{enumerate}
\item $\textrm{supp}(\Phi_{\alpha})~\subseteq~U_{\alpha}$.
\item $\sum_{\alpha} \Phi_{\alpha}~=~1$.
\end{enumerate}

Let $\mathcal{F}_i$ be flabby sheaves of $\maK$ modules. By this we
mean that for any open set $U~\subseteq~X$, the space of sections
$\Gamma(U,\mathcal{F}_i)$ is a modules over$\Gamma(U,\maK)$. Let
$$\xymatrix{
\mathcal{F}~=~\mathcal{F}_0&\mathcal{F}_1\ar[l]_{d_0}&\mathcal{F}_2\ar[l]_{d_1}&\dots\ar[l]_{d_2}}$$
be a sheaf of complexes such that each $d_i$ is $\maK-$linear.

\begin{lem}\label{flabby}: Let  a be flabby sheaf of complexes $\mathcal{F}$ over
a sheaf of unital algebra $\maK$. If $\maK$ has partition of unity
then
$$\Gamma(X,\mathcal{H}_{*}(\mathcal{F})) ~\simeq~ \mathcal{H}_{*}(\Gamma(X,\mathcal{F})).$$
\end{lem}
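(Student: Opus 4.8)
The plan is to prove that for a flabby sheaf of complexes $\maF$ over a sheaf of unital algebras $\maK$ with partition of unity, taking homology commutes with taking global sections. The essential content is that flabbiness plus the partition of unity makes the global-sections functor exact on this class of sheaves, so it commutes with homology. I would organize the argument around establishing that the sequences of global sections behave as well as the sheaf-level sequences.

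First I would fix notation: write $Z_k = \ker(d_{k-1}:\maF_k\to\maF_{k-1})$, $B_k = \operatorname{im}(d_k:\maF_{k+1}\to\maF_k)$, and $\maH_k(\maF) = Z_k/B_k$ as sheaves, so that $\maH_k(\maF)$ is by definition the sheafification of the presheaf quotient. The map in question is the natural comparison map
\[
\Gamma(X,\maH_k(\maF)) \longleftarrow \maH_k(\Gamma(X,\maF)) = \ker(\Gamma(d_{k-1}))/\operatorname{im}(\Gamma(d_k)),
\]
induced because $\Gamma$ is left exact (so it computes $\ker$ on the nose) and there is always a canonical map from the quotient of global sections to global sections of the homology sheaf. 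So the surjectivity of $\Gamma(d_k)$-type maps, and the agreement of the global cycle/boundary quotient with the global sections of $\maH_k(\maF)$, are exactly the two things to check.

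The two key steps are then as follows. \textbf{Step 1: the boundary maps are surjective on global sections.} Since each $\maF_i$ is flabby and $d_i$ is $\maK$-linear, I would argue that the image sheaf $B_k$ is again flabby: given a global section $s$ of $B_k$, I cover $X$ by opens $U_\alpha$ on which $s$ lifts to a local section $t_\alpha$ of $\maF_{k+1}$ with $d_k t_\alpha = s|_{U_\alpha}$, then glue the $t_\alpha$ using the partition of unity $\{\Phi_\alpha\}$ subordinate to the cover. Here $\maK$-linearity of $d_k$ is crucial: it lets me set $t = \sum_\alpha \Phi_\alpha t_\alpha$ and compute $d_k t = \sum_\alpha \Phi_\alpha\, d_k t_\alpha = \sum_\alpha \Phi_\alpha\, s = s$, using $\sum_\alpha \Phi_\alpha = 1$ and $\operatorname{supp}(\Phi_\alpha)\subseteq U_\alpha$. \textbf{Step 2: assemble into exactness of $\Gamma$.} Flabby sheaves are acyclic for $\Gamma$, so the short exact sequences of sheaves $0\to Z_k\to\maF_k\to B_{k-1}\to 0$ and $0\to B_k\to Z_k\to\maH_k(\maF)\to 0$ stay exact after applying $\Gamma(X,-)$ (the connecting $H^1$ terms vanish because the relevant kernels and images are flabby). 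Chasing these two exact sequences of global sections identifies $\ker(\Gamma(d_{k-1}))/\operatorname{im}(\Gamma(d_k))$ with $\Gamma(X,Z_k)/\Gamma(X,B_k) \cong \Gamma(X,\maH_k(\maF))$, which is the claim.

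The main obstacle I expect is Step 1, the flabbiness (equivalently, $\Gamma$-surjectivity) of the image and cycle sheaves. Proving this is precisely where both hypotheses are consumed: the partition of unity supplies the gluing, and $\maK$-linearity of the differentials is what makes $\sum_\alpha \Phi_\alpha t_\alpha$ a genuine lift rather than merely an approximate one. One must also verify that $Z_k$ is flabby --- this follows since $Z_k=\ker d_{k-1}$ and flabbiness of $\maF_k$ together with surjectivity of $d_{k-1}$ onto the flabby sheaf $B_{k-1}$ gives, by the same partition-of-unity gluing applied to kernels, that global cycles extend. Once the flabbiness of all the $Z_k$ and $B_k$ is in hand, the diagram chase in Step 2 is routine homological algebra, so the heart of the proof is genuinely the gluing argument of Step 1.
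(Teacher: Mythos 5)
Your overall strategy---reduce everything to exactness of $\Gamma(X,-)$ on the cycle and boundary subsheaves---is a legitimate reorganization of the argument, and your Step 1 is correct and is exactly where the two hypotheses are consumed: the identity $d_k\bigl(\sum_\alpha \Phi_\alpha t_\alpha\bigr)=\sum_\alpha \Phi_\alpha\, d_k t_\alpha = s$ is the engine of the whole lemma. The paper does not pass through the subsheaves $Z_k$, $B_k$ at all: it writes down the comparison map $\Phi$ directly (restrict a global cycle, take its local class, glue the classes) and then constructs an explicit inverse $\sigma(a)=\sum_\alpha \Phi_\alpha \widetilde{b_\alpha}$, where the $b_\alpha$ are local cycle representatives of $a\in\Gamma(X,\mathcal{H}_*(\maF))$ and $\widetilde{b_\alpha}$ are global extensions supplied by flabbiness. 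So the paper applies your gluing trick once, at the level of homology representatives, rather than twice at the level of boundaries and cycles.

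The genuine gap is in your Step 2, where you assert that $Z_k$ is flabby and that the relevant $H^1$ terms vanish because $B_k$ is flabby. Neither is established by your argument, and the flabbiness of $Z_k$ in particular is doubtful: given a cycle $z\in\Gamma(U,Z_k)$, a flabby extension of $z$ to $\maF_k$ need not be closed, and the partition-of-unity repair $\Phi_U z$ extends by zero but no longer restricts to $z$ on $U$, so it is not an extension in the sense flabbiness requires. Similarly, Step 1 gives surjectivity of $\Gamma(X,\maF_{k+1})\to\Gamma(X,B_k)$ but not restriction-surjectivity $\Gamma(X,B_k)\to\Gamma(U,B_k)$, since the partition of unity is only postulated for covers of $X$, not of arbitrary opens $U$. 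Fortunately none of this machinery is needed: the only statement your diagram chase actually requires beyond Step 1 is surjectivity of $\Gamma(X,Z_k)\to\Gamma(X,\mathcal{H}_k(\maF))$, and that follows by running your own gluing directly on homology: cover $X$, pick local cycles $b_\alpha$ representing $a|_{U_\alpha}$, extend by flabbiness, and set $b_1=\sum_\alpha\Phi_\alpha\widetilde{b_\alpha}$; then $d b_1=0$ by $\maK$-linearity and $b_1$ represents $a$ locally because the differences $b_\alpha-b_\beta$ are local boundaries and $\sum_\alpha\Phi_\alpha(b_\alpha-b_\beta)=d\bigl(\sum_\alpha\Phi_\alpha c_{\alpha\beta}\bigr)$. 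That is precisely the paper's inverse map, and substituting it for your acyclicity claims closes the gap.
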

\begin{proof} Let $b \in \Gamma(X,\mathcal{F}_{*})$ be such that
$d_{*}^{X}(b)~=~0$. So $[b] \in
\mathcal{H}_{*}(\Gamma(X,\mathcal{F}))$. For any open set $U
\subseteq X$, define
$$b^U=\textrm{res}_{XU}(b)~ \text{ and }~[b^U] \in
\Gamma(U,\mathcal{H}_{*}(\mathcal{F})).$$ Then for any open cover
$\left\{U_{\alpha}\right\}$ of $X$,
$$\textrm{res}_{U^{\alpha},U^{\alpha} \bigcap
U^{\beta}}\left([b^{U_{\alpha}}]\right)~=~[b^{U_{\alpha} \bigcap
U_{\beta}}]~=~\textrm{res}_{U_{\beta},U_{\alpha} \bigcap
U_{\beta}}\left([b^{U_{\beta}}]\right).$$ Hence there is a global
section $\Phi(b) \in
 \Gamma(X,\mathcal{H}_{*}(\mathcal{F})$. It follows from the
 definition that $\Phi(b)$ depends only on the class of $[b] \in
 \mathcal{H}_{*}(\Gamma(X,\mathcal{F}))$.

The map $\Phi$ above
 $\mathcal{H}_{*}(\Gamma(X,\mathcal{F}))~\rightarrow~\Gamma(X,\mathcal{H}_{*}(\mathcal{F}))$
 is always defined. We now prove that under our assumptions on $k \in
 \mathcal{F}$, it is in fact an isomorphism.

We construct an inverse as follows. Choose an open cover
 $U_{\alpha}$ and a subordinate partition of unity
 $\Phi_{\alpha}$. Let $a \in
 \Gamma(X,\mathcal{H}_{*}(\mathcal{F}))$ and $a_{\alpha} = \textrm{res}_{XU_{\alpha}}(a) \in
 \Gamma(U_{\alpha},\mathcal{H}_{*}(\mathcal{F}))$. Pick
 representatives $b_{\alpha} \in
 \Gamma(U_{\alpha},\mathcal{F}_{*})$ with $[b_{\alpha}] =
 a_{\alpha}$. Since $\mathcal{F}_{*}$ is flabby, there exists a $\widetilde{b_{\alpha}} \in
 \Gamma(X,\mathcal{F}_{*})$ so that $\textrm{res}_{XU_{\alpha}}(\widetilde{b_{\alpha}}) =
 b_{\alpha}$.

Let $b_{1} = \sum \Phi_{\alpha}\widetilde{b_{\alpha}} ~\in~
 \Gamma(X,\mathcal{F}_{*})$. Then $d_{*}(b_{1})|_{U_{\alpha}} =
 0$ and hence $d_{*}(b) = 0$. For any other choice of
 representatives and partition of unity or cover, we can see that
 the class of $b_{1}$, $[b_{1}] \in
 \mathcal{H}_{*}(X,\mathcal{F})$, is independently defined. We say
 that $\sigma(a) = b_{1}$, and from definition of
 $\sigma$ and $\Phi$, they are inverses of each other.
 \end{proof}

\section{Smooth Manifolds}\label{section-SM}

Let $M$ be a closed manifold. We are now ready to find the
Hochschild and cyclic homology of $\smooth(M)\rtimes  \gp$ using
localization argument and the results for $\RR^n$ of the previous
sections.

Recall that for every conjugacy class $\langle \gamma \rangle $ of
the group $\gp$, there is a subcomplex of $\maH_*(\maB)$ given by
$\maH_*(\maB)_\gamma= \{(a_{g_0}g_0\otimes a_{g_1}g_1\otimes \ldots
\otimes a_{g_n}g_n)|g_1.g_2\ldots g_n.g_0 \in\langle \gamma
\rangle\}$ and which yields the decomposition :
$\maH_*(\maB)=\bigoplus_{\langle \gamma
\rangle}\maH_*(\maB)_\gamma.$

Fix a conjugacy class by choosing a representative $\gamma$. We
denote by $\gp_{\gamma}$ the stabilizer of $\gamma$ in $\gp$. Also
if $\gamma'\in\langle \gamma \rangle$, then we denote by $ S_{\gamma
\gamma'}=\{k\in\gp\, |\,\gamma =k\gamma' k^{-\!1}\}$. Define a map
$\chi_*^{\gamma}:\maH_*(\maB)_{\gamma}\rightarrow
\Omega(M^{\gamma})$ from the Hochschild complex to the forms on the
fixed point manifold, by
\begin{multline*}
\chi_m^{\gamma}(a_{g_0}g_0\otimes a_{g_1}g_1\otimes \ldots \otimes
a_{g_n}g_n):=\\
\left(\frac{1}{|\gp_{\gamma}|}\sum_{h\in S_{\gamma\gamma'}}\gamma
hg_0^{-1}(a_0)dh(a_1)dhg_1(a_2) \ldots dhg_1g_2\ldots
g_{n-1}(a_n)\right)_{|_{M^{\gamma}}},
\end{multline*}
where $\gamma'=g_1g_2\ldots g_ng_0$. Then $\chi_n^{\gamma}\circ b=0$
and $\chi_n^{\gamma}\circ B=d_{M^{\gamma}}$ is the de Rham
differential on $M^{\gamma}$.
\begin{theo}\label{HH_crossproduct}
The Hochschild homology of the complex $\maH_*(\maB)_{\gamma}$ is
isomorphic to the $\gp_{\gamma}$ invariant forms on the fixed point
manifold $M^{\gamma}$. More precisely, the map
\[\chi_*^{\gamma}: \maH_*(\maB)_{\gamma} \rightarrow \Omega^*(M^{\gamma})^{\gp_{\gamma}}.\]
is a quasi-isomorphism between $(\maH_n(\maB)_{\gamma},b)$ and
$(\Omega^n(M^{\gamma})^{\gp_{\gamma}},0)$
\end{theo}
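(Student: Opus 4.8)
The plan is to prove Theorem \ref{HH_crossproduct} by assembling the local computation of Corollary \ref{fixed_point} into a global statement using the sheaf-theoretic machinery of Lemma \ref{flabby}. The strategy proceeds through three stages: first, reduce to a normalized complex of sheaves localized near the loop space; second, identify the stalk homology via the linear (Euclidean) case; third, globalize the stalkwise quasi-isomorphism using flabbiness and partitions of unity.

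First I would invoke Proposition \ref{sheafy} and Proposition \ref{global_sect} to replace the Hochschild complex $(\maH_n(\maB)_{\gamma},b)$ with the complex of global sections $\gp(B^{(n)},\pi^{-1}\maF_n)$ of the sheaves $\maF_n$ on the loop space $B^{(0)}$. The key point is that the stalk of $\maF_n$ at a point $(x,\gamma)\in B^{(0)}$ is, via the isomorphism $\phi_a$ of the Remark, identified with $\smooth(M)_{\gamma^{-1}x}^{\otimes n+1}$ equipped precisely with the twisted differential $b_\gamma$ of Equation (\ref{twisted}). Thus each stalk is exactly the twisted Hochschild complex $\maC_*(\maA)_\gamma$ studied in Section \ref{section-LAR} for the germs at a fixed point. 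Because the question of stalk homology is purely local, I would pass to local coordinates where the diffeomorphism $\gamma$ is replaced by its linearization (the differential $d\gamma_x$ acting on the tangent space), so that Corollary \ref{fixed_point} — or rather the local Theorem identifying $H_q(\maC_*(\maA)_\gamma)$ with $\Omega^q(V^\gamma)$ — applies directly. This computes the stalk homology of $\maF_*$ to be the germs of differential forms on the fixed point manifold $M^\gamma$.

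Next I would feed this into Lemma \ref{flabby}. Here the sheaf of unital algebras $\maK$ is the sheaf of smooth functions on $B^{(0)}$, which admits partitions of unity since $M^\gamma$ is a manifold; the sheaves $\maF_n$ are flabby sheaves of $\maK$-modules (being built from smooth functions, whose sections extend off any open set). Since the stalkwise homology is $\Omega^*(M^\gamma)$ as computed above, Lemma \ref{flabby} yields
\[
\maH_*\big(\gp(B^{(0)},\maF_*)\big)\simeq \gp\big(B^{(0)},\maH_*(\maF_*)\big)=\Omega^*(M^\gamma).
\]
Finally, to land in the $\gp_\gamma$-invariants, I would carry the $\gp_\gamma$-action through the entire construction: the action is equivariant at the level of the explicit maps (as already noted at the end of the proof of the local Theorem and in Proposition \ref{cp}), and averaging over $\gp_\gamma$ with the factor $\frac{1}{|\gp_\gamma|}$ produces the explicit formula for $\chi_*^\gamma$ stated before the theorem. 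Tracing through $\phi_a$ and the HKR map $\chi_k$ of Theorem \ref{HKR} shows that the induced map on global sections is exactly $\chi_*^\gamma$, and its image is the $\gp_\gamma$-invariant forms.

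The main obstacle I anticipate is the compatibility of the stalkwise identification with the globalization: one must verify that the local isomorphism $\phi_a$, the linearization of $\gamma$ near each fixed point, and the averaging projection onto $\gp_\gamma$-invariants all assemble into a genuine morphism of \emph{sheaves} of complexes (not merely a stalkwise isomorphism), so that Lemma \ref{flabby} is applicable and the resulting global map is precisely $\chi_*^\gamma$. In particular, checking that the normal-bundle contribution $(1-d\gamma)V$ contributes trivially to homology uniformly over $M^\gamma$ — and that the restriction to $M^\gamma$ in the formula for $\chi_m^\gamma$ is exactly the passage to stalk homology — requires care, since the fixed point manifold may have varying dimension on different components and the Koszul argument of Section \ref{section-LAR} must be applied fiberwise over $M^\gamma$. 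The remaining verifications that $\chi_n^\gamma\circ b=0$ and that $\chi_*^\gamma$ is the map induced on homology are direct but somewhat tedious computations using the explicit formula.
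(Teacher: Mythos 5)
Your proposal follows essentially the same route as the paper: localize via Proposition \ref{sheafy} to the sheaves $\maF_n$ on the loop space $B^{(0)}_{\gamma}$, compute the stalk homology by the twisted Hochschild--Kostant--Rosenberg identification coming from the linear/Koszul computation, and globalize with Lemma \ref{flabby} before averaging over $\gp_{\gamma}$ to identify the result with $\chi_*^{\gamma}$. The only cosmetic difference is your choice of the structure sheaf for Lemma \ref{flabby} (smooth functions on $B^{(0)}$ rather than the sheaf of $\gp$-invariant functions $\smooth(M)^{\gp}$ used in the paper, which is what makes the twisted differential $b_{\gamma}$ manifestly linear over the base ring); this does not change the argument.
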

\begin{proof}
By Proposition \ref{sheafy}, the map
  $\Phi:\maH_*(\maB)_{\gamma}\rightarrow
  (\gp(B^{(0)}_{\gamma},\maF_n^{\gp_{\gamma}}),b_{\gamma})$ is an isomorphism. The sheaves $\maF_n$ are
flabby and are modules over the sheaf of invariant functions
$\mathfrak{K}=\mathfrak{C}^{\infty}(M)^{\gp}$. This is the sheaf
associated to the presheaf $\maU\rightarrow \smooth(\maU)^{\gamma}$.
Also the differentials $b_{\gamma}$ are
$\mathfrak{C}^{\infty}(M)^{\gp}-$linear. Similarly, the sheaf of
differential $n$-forms $\Omega_n$ too is a flabby sheaf of
$\mathfrak{K}$ modules. Then by Theorem \ref{HKR}, the chain map
$\tilde{\chi}_n(a_0,a_1,\ldots a_n)\rightarrow a_0da_1\ldots da_n$
is in fact an quasi-isomorphism on each stalk and inverse (defined
only on homology) given by
\[E_k(a_0da_1da_2\ldots da_k)=\frac{1}{k!}\sum_{\pi \in S_k}sign(\pi)
(\tilde{a}_0\gamma \otimes \tilde{a}_{\pi(1)}e\otimes
\tilde{a}_{\pi(2)}e \otimes \ldots \otimes \tilde{a}_{\pi(k)}e).\]
By Lemma \ref{flabby}, $\tilde{\chi}$ is an isomorphism from
\[\gp(B^{(0)}_{\gamma},\maF_n)\rightarrow \Omega(M^{\gamma}).\]
Thus the theorem follows by observing that
$\chi_n^{\gamma}=\tilde{\chi}\circ\Phi$.
\end{proof}
\begin{theo}\label{HC_crossproduct}
\[HC_k(\maB)_{\gamma}=\Omega^k(M^{\gamma})^{\gp_{\gamma}}\bigoplus_{j>0}
H_{\textrm{de~Rham}}^{k-2j}(M^{\gamma})^{\gp_{\gamma}}.\]
\end{theo}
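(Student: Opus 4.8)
The plan is to deduce the cyclic homology from the Hochschild computation of Theorem~\ref{HH_crossproduct} by passing to Connes' mixed ($(b,B)$) complex. First I would observe that Connes' operator $B$, being built from cyclic permutation of the tensor factors, conjugates the product $g_1g_2\cdots g_ng_0$ and hence preserves the conjugacy-class decomposition; since $b$ preserves it as well, the whole $(b,B)$-bicomplex splits over conjugacy classes. Thus $HC_*(\maB)=\bigoplus_{\langle\gamma\rangle}HC_*(\maB)_\gamma$, where $HC_*(\maB)_\gamma$ denotes the cyclic homology of the mixed complex $(\maH_*(\maB)_\gamma,b,B)$, and it suffices to treat a single conjugacy component.

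The two relations recorded just before Theorem~\ref{HH_crossproduct}, namely $\chi^\gamma\circ b=0$ and $\chi^\gamma\circ B=d_{M^\gamma}\circ\chi^\gamma$, say precisely that $\chi^\gamma$ is a morphism of mixed complexes
\[
\chi^\gamma:(\maH_*(\maB)_\gamma,\,b,\,B)\longrightarrow(\Omega^*(M^\gamma)^{\gp_\gamma},\,0,\,d),
\]
the target being the de Rham forms equipped with the zero Hochschild differential and with the de Rham $d$ playing the role of $B$. By Theorem~\ref{HH_crossproduct}, $\chi^\gamma$ is a quasi-isomorphism of the underlying $b$-complexes.

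Next I would upgrade this to cyclic homology using the standard principle that a morphism of mixed complexes which is a quasi-isomorphism on Hochschild homology is automatically a quasi-isomorphism on cyclic homology. Concretely, $\chi^\gamma$ induces a ladder between the two Connes long exact sequences
\[
\cdots\to HH_n\xrightarrow{\ I\ }HC_n\xrightarrow{\ S\ }HC_{n-2}\xrightarrow{\ B\ }HH_{n-1}\to\cdots,
\]
and since it is an isomorphism on every $HH$-term, an induction on $n$ (using $HC_{<0}=0$) together with the five lemma yields isomorphisms on every $HC_n$. Equivalently, filtering both $(b,B)$-bicomplexes by columns makes $\chi^\gamma$ an isomorphism on the $E^1$-page, hence on the abutment. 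This transfer is the conceptual heart of the argument; the one point that must be verified with care is that $\chi^\gamma$ intertwines $b$ and $B$ at the chain level (not merely on homology), so that the ladder genuinely commutes.

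Finally I would compute $HC_*$ of the model mixed complex $(\Omega^*(M^\gamma)^{\gp_\gamma},0,d)$ directly, this being Connes' computation for a manifold. Because the Hochschild differential vanishes, the total complex of the $(b,B)$-bicomplex in degree $k$ is $\Omega^k\oplus\Omega^{k-2}\oplus\cdots$, with total differential $(\omega_k,\omega_{k-2},\dots)\mapsto(d\omega_{k-2},d\omega_{k-4},\dots)$, the top entry $\omega_k$ having no target. Reading off homology, the top slot contributes closed-modulo-exact forms $\Omega^k(M^\gamma)^{\gp_\gamma}/d\Omega^{k-1}$ (the $j=0$ term), while each lower slot $\Omega^{k-2j}$ contributes $\ker d/\operatorname{im}d=H^{k-2j}_{\textrm{de~Rham}}(M^\gamma)^{\gp_\gamma}$ for $j>0$. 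Since $\gp_\gamma$ is finite and the ground field is $\CC$, passing to $\gp_\gamma$-invariants is exact and commutes with homology, so invariant forms yield invariant de Rham cohomology throughout. Combining with the conjugacy decomposition gives
\[
HC_k(\maB)_\gamma=\Omega^k(M^\gamma)^{\gp_\gamma}\big/d\Omega^{k-1}\ \oplus\ \bigoplus_{j>0}H^{k-2j}_{\textrm{de~Rham}}(M^\gamma)^{\gp_\gamma},
\]
where the $j=0$ summand is to be read as forms modulo exact forms.
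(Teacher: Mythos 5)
Your argument is essentially the paper's own proof: the paper likewise observes that $\chi^\gamma$ is a morphism of mixed complexes from $(\maH_*(\maB)_\gamma,b,B)$ to $(\Omega^*(M^\gamma)^{\gp_\gamma},0,d)$ which is an isomorphism on the columns by Theorem~\ref{HH_crossproduct}, and concludes it is an isomorphism on the total complexes; your added details (the conjugacy-class splitting of $B$, the five-lemma/column-filtration justification, and the explicit homology of the model mixed complex) only make explicit what the paper leaves implicit. Note also that your reading of the $j=0$ summand as $\Omega^k(M^\gamma)^{\gp_\gamma}/d\Omega^{k-1}$ is the correct one, the quotient being suppressed in the paper's notation.
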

\begin{proof}
Since $\chi:(B_*(\maB)_{\gamma},b,B)\rightarrow
(\Omega^*(M^{\gamma})^{\gp_{\gamma}},0,d)$ is a map of mixed
complexes which is an isomorphism on the columns by Theorem
\ref{HH_crossproduct} above, $\chi^{\gamma}$ must be an isomorphism
on the total complexes of these mixed complexes.
\end{proof}
\begin{cor}
  \[HP_k(\maB)_{\gamma}=\sum_{j\in\ZZ}
  H_{\textrm{de~Rham}}^{k-2j}(M^{\gamma})^{\gp_{\gamma}}.\]
\end{cor}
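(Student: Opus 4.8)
The plan is to obtain $HP$ from the cyclic homology of Theorem~\ref{HC_crossproduct} by passing to the limit along the periodicity operator $S\colon HC_{m}(\maB)_{\gamma}\to HC_{m-2}(\maB)_{\gamma}$. Recall that periodic cyclic homology sits in a Milnor exact sequence
\[
0 \longrightarrow \varprojlim^{1}_n HC_{k+2n+1}(\maB)_{\gamma}
\longrightarrow HP_k(\maB)_{\gamma}
\longrightarrow \varprojlim_n HC_{k+2n}(\maB)_{\gamma}
\longrightarrow 0,
\]
where the towers are formed by the maps $S$. So the whole computation reduces to understanding the tower $\{HC_{k+2n}(\maB)_{\gamma}\}_{n}$ and showing that the $\varprojlim^{1}$-term vanishes.

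First I would feed in Theorem~\ref{HC_crossproduct}, which gives
\[
HC_{k+2n}(\maB)_{\gamma}=\Omega^{k+2n}(M^{\gamma})^{\gp_{\gamma}}
\bigoplus_{j>0} H_{\textrm{de~Rham}}^{k+2n-2j}(M^{\gamma})^{\gp_{\gamma}}.
\]
Since $M^{\gamma}$ is a compact (finite-dimensional) manifold, the form term $\Omega^{k+2n}(M^{\gamma})^{\gp_{\gamma}}$ vanishes once $k+2n>\dim M^{\gamma}$, and the cohomology summand stabilizes: for $n\gg 0$ one has $HC_{k+2n}(\maB)_{\gamma}\cong\bigoplus_{j\in\ZZ} H_{\textrm{de~Rham}}^{k-2j}(M^{\gamma})^{\gp_{\gamma}}$, with $S$ acting as the identity on this stable value (it kills the top form class and shifts the cohomology classes, which is invertible once the $\Omega$-summand has disappeared). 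Thus the tower is eventually constant. Being eventually constant it is Mittag-Leffler, so $\varprojlim^{1}_n HC_{k+2n+1}(\maB)_{\gamma}=0$ and the Milnor sequence collapses to $HP_k(\maB)_{\gamma}\cong\varprojlim_n HC_{k+2n}(\maB)_{\gamma}$, which equals the stable value $\bigoplus_{j\in\ZZ} H_{\textrm{de~Rham}}^{k-2j}(M^{\gamma})^{\gp_{\gamma}}$. This is exactly the claimed formula.

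I expect the stabilization (equivalently, the vanishing of $\varprojlim^{1}$) to be the only genuine point: it is where compactness and finite-dimensionality of $M^{\gamma}$ enter, guaranteeing that forms of high degree vanish and that de Rham cohomology is finite-dimensional, so no $\varprojlim^{1}$ obstruction can appear. A shortcut avoiding the tower entirely is also available: Theorem~\ref{HH_crossproduct} exhibits $\chi^{\gamma}$ as a morphism of mixed complexes $(\maH_*(\maB)_{\gamma},b,B)\to(\Omega^*(M^{\gamma})^{\gp_{\gamma}},0,d)$ that is an isomorphism on Hochschild homology; since such a map induces an isomorphism on $HP$, it suffices to note that the periodic complex of the target, having vanishing $b$-differential, is just the de Rham complex repeated with period two, whose homology in degree $k\bmod 2$ is $\bigoplus_{j\in\ZZ} H_{\textrm{de~Rham}}^{k-2j}(M^{\gamma})^{\gp_{\gamma}}$.
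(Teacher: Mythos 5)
Your proposal is correct and follows essentially the same route as the paper: the paper likewise observes that $HH_n(\maB)_{\gamma}$ vanishes for $n>\dim M$, so by the SBI sequence the periodicity map $S$ is eventually an isomorphism, and then identifies $HP$ with the inverse limit of the tower while noting that the $\varprojlim^1$ term vanishes. Your Mittag-Leffler justification of the $\varprojlim^1$ vanishing and the mixed-complex shortcut are just slightly more explicit versions of the same argument.
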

\begin{proof}
Since $HH_n(\maB)_{\gamma}=0$ for $n>\textrm{dim}(M)$, the
periodicity map $S:HC_{n+2}(\maB)_{\gamma}\rightarrow
HC_n(\maB)_{\gamma}$ is an isomorphism by the SBI-exact sequence. In
particular,
$\underleftarrow{\textrm{lim}}HC_*(\maB)_{\gamma}=\sum_{j\in\ZZ}
  H_{\textrm{de~Rham}}^{k-2j}(M^{\gamma})^{\gp_{\gamma}}$. Also
$\underleftarrow{\textrm{lim}}^1CC_*(\maB)_{\gamma}=0$.
\end{proof}

\section{Topologically Filtered Algebra}\label{section-TFA}

The algebra of complete symbols with the topology that is described
below, is not a topological algebra in the above sense because the
multiplication is not jointly continuous. Hence we need a larger
category, namely that of topologically filtered algebra defined in
\cite{BenameurNistor}. Let $\maA$ be a algebra  with filtration
$\maA=\bigcup_{p\in\ZZ}\maF_p\maA$. That is to say, each
$\maF_p\maA$ is a subspace $\maF_p\maA\subset \maF_{p+1}\maA$ and
the multiplication map takes  $\maF_p\maA\times
\maF_q\maA\rightarrow \maF_{p+q}\maA$. We would say that $\maA$ is a
filtered algebra for short.

Since, by definition, $\maF_0\maA$ is a subalgebra of $\maA$ and
$\maF_{-1}\maA$ is an ideal of $\maF_0\maA$,
$\maM_0:=\maF_0\maA/\maF_{-1}\maA$ is naturally an algebra and each
$\maM_p^j:=\maF_p\maA/\maF_{p-j}\maA$ is a module over $\maM_0$.
Similarly, $\maI:=\bigcap_p\maF_p\maA$ is an ideal in $\maA$.  We
call our algebra a symbol algebra if $\maI=0$. We only consider
symbol algebra for now. That is, if $\maA'$ is any filtered algebra,
we consider the algebra $\maA=\maA'/\maI$.
\begin{definition}
We say that a filter algebra is \emph{topologically filtered} if
\begin{enumerate}
\item $\maA$ is a symbol algebra, that is $\maI:=\bigcap_p\maF_p\maA=0$.
\item  Each $\maM_p^j=\maF_p\maA/\maF_{p-j}\maA$ is a nuclear 
Frechet space for all $p$ and each $j$.
\item Each module map $\maM_p^j\hat{\otimes} \maM_q^j\rightarrow
\maM_{p+q}^j$ induced by the multiplication in $\maA$ is continuous.

We call an element $P\in \maF_1\maA$ {\em elliptic} if it is invertible and
$P^n\in\maF_n\maA$ for all integers $n$.
\item \label{module} There exists an elliptic element such that the map
$\maF_n\maA/\maF_{n-1}\maA\ni[P^n]\rightarrow P^n\in \maF_n\maA$ gives
a linear splitting of
$\maF_{n-1}\maA\hookrightarrow\maF_n\maA\rightarrow
\maF_n\maA/\maF_{n-1}\maA$.
\end{enumerate}
\end{definition}

If $\maA$ is a topologically filtered algebra then using the existence
of an elliptic element, we have
\begin{align*}
 \maF_p\maA&=\maF_{p-1}\oplus\maF_p\maA/\maF_{p-1}\maA\\
&=\maF_{p-2}\oplus\maF_p\maA/\maF_{p-1}\maA\oplus\maF_{p-1}\maA/\maF_{p-2}\maA\\
&=\maF_{p-2}\oplus\maF_p\maA/\maF_{p-2}\maA.
 \end{align*}
Since $\maI=0$, by repeating the iterations, we obtain
$$\maF_p\maA=\underleftarrow{\textrm{lim}}\,\maF_p\maA/\maF_{p-k}\maA=\prod_{j\leq
p} \maF_j\maA/\maF_{j-1}\maA.$$ We use this description to endow
$\maF_p\maA$ with the projective limit topology from the Frechet
topologies on $\maF_p\maA/\maF_{p-k}\maA$.  $\maA$ is then endowed
with the inductive limit topology from
$\maA=\underrightarrow{\textrm{lim}}\maF_p\maA=\prod_p
\maF_p\maA/\maF_{p-1}\maA$.  Since strong inductive limits of nuclear spaces is
nuclear, the
topology on $\maA$ is nuclear. We call the resultant topology the weak
topology on $\maA$. (The inductive limit topologies are `strong'
topologies. The nomenclature here is to distinguish the inductive
limit topology on $\maA$ with yet another topology which is stronger.)
Since $\maA$ is a filtered algebra $\maA_0:=\maF_0\maA$ is a
subalgebra and further $\maA_{-1}:=\maF_{-1}\maA$ is an ideal in
$\maA_0$. By Axiom
\ref{module}, each
 $A_i=\maF_i\maA/\maF_{i-1}\maA$ is a module over $A_0=\maA_0/\maA_{-1}$
 generated by a single element $[P^i]$.
\begin{prop}\label{product}
The multiplication in $\maA$ is separately continuous with respect
to the weak topology. The multiplication is jointly continuous on
the subalgebra $\maA_0=\maF_0\maA$.
\end{prop}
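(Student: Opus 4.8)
The plan is to exploit the two universal properties packaged into the weak topology. Maps \emph{out} of $\maA$ are governed by the inductive limit $\maA=\underrightarrow{\textrm{lim}}\,\maF_p\maA$, while maps \emph{into} each piece are governed by the projective limit $\maF_p\maA=\underleftarrow{\textrm{lim}}_k\,\maM_p^k$ established in the paragraph preceding the proposition. The only genuine analytic input beyond these formal properties is Axiom (3): for each $j$ the induced map $\maM_p^j\hat{\otimes}\maM_q^j\rightarrow\maM_{p+q}^j$ is continuous, which is the same as saying the bilinear multiplication $\maM_p^j\times\maM_q^j\rightarrow\maM_{p+q}^j$ is jointly (and in particular separately) continuous. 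Everything reduces to threading a fixed element through these descended bilinear maps.

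For separate continuity I would fix $a\in\maF_q\maA$ and treat left multiplication $L_a\colon b\mapsto ab$; right multiplication is identical by symmetry. By the universal property of the inductive limit, $L_a$ is continuous on $\maA$ as soon as its restriction to each $\maF_p\maA$ is continuous. Since $L_a(\maF_p\maA)\subseteq\maF_{p+q}\maA$ and the inclusion $\maF_{p+q}\maA\hookrightarrow\maA$ is continuous, it suffices to prove that $L_a\colon\maF_p\maA\rightarrow\maF_{p+q}\maA$ is continuous. As the target carries the projective limit topology, I would check this after composing with each projection onto $\maM_{p+q}^k$. The key observation is that $b\mapsto[ab]$ annihilates $\maF_{p-k}\maA$, because $a\cdot\maF_{p-k}\maA\subseteq\maF_{(p+q)-k}\maA$ by the filtration property of the product; hence the composite factors as the continuous projection $\maF_p\maA\rightarrow\maM_p^k$ followed by the descended map $\maM_p^k\rightarrow\maM_{p+q}^k$, which is left multiplication by the fixed element $[a]\in\maM_q^k$. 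The latter is continuous by Axiom (3), so the composite is continuous for every $k$, and separate continuity follows.

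For joint continuity on $\maA_0=\maF_0\maA$ no inductive limit intervenes: $\maF_0\maA=\underleftarrow{\textrm{lim}}_k\,\maM_0^k$ is a (nuclear) Fr\'echet space and multiplication maps $\maF_0\maA\times\maF_0\maA$ into $\maF_0\maA$. Joint continuity of a bilinear map into a projective limit is equivalent to joint continuity of each composite $\maF_0\maA\times\maF_0\maA\rightarrow\maM_0^k$. Since $\maF_{-k}\maA$ is an ideal in $\maF_0\maA$, the map $(a,b)\mapsto[ab]$ descends to the bilinear map $\maM_0^k\times\maM_0^k\rightarrow\maM_0^k$, which is jointly continuous by Axiom (3); precomposing with the continuous projections $\maF_0\maA\rightarrow\maM_0^k$ then yields joint continuity of each composite, hence of the multiplication on $\maF_0\maA$.

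The main obstacle, and the reason the statement splits into a separate and a joint assertion, is exactly the mismatch between the product topology on $\maA\times\maA$ and the inductive limit structure: the product topology is not the inductive limit of the Fr\'echet products $\maF_p\maA\times\maF_q\maA$, so one cannot upgrade the argument above to joint continuity on all of $\maA$ by reducing to the Fr\'echet pieces. The separate-continuity bookkeeping succeeds only because one variable is held fixed while the other alone ranges over the inductive limit. On the genuinely Fr\'echet subalgebra $\maF_0\maA$ this difficulty evaporates and the projective-limit argument gives full joint continuity. I would expect the only routine points to verify to be the ideal and module inclusions such as $a\cdot\maF_{p-k}\maA\subseteq\maF_{(p+q)-k}\maA$ and $\maF_0\maA\cdot\maF_{-k}\maA\subseteq\maF_{-k}\maA$, all immediate from the defining property of the filtered multiplication, together with the identification of the descended maps with those appearing in Axiom (3).
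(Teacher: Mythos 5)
Your proof is correct and follows essentially the same route as the paper: reduce everything to the quotients $\maM_p^j$ via the universal properties of the projective and inductive limit topologies and invoke Axiom (3), which is exactly how the paper handles joint continuity on $\maF_0\maA$. The only difference is that the paper dismisses separate continuity with ``can be proved similarly,'' whereas you supply the factorization $\maF_p\maA\rightarrow\maM_p^k\rightarrow\maM_{p+q}^k$ explicitly, which is a welcome completion rather than a deviation.
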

\begin{proof}
First we prove that multiplication is jointly continuous on
$\maA_0$. This is an immediate consequence of universal property of
projective limits. Since the multiplication map on
  $\maF_0\maA/\maF_{-j}\maA\hat{\otimes}\maF_0\maA/\maF_{-j}\maA\rightarrow\maF_0\maA/\maF_{-j}\maA$
  is continuous for each $j$, by composition there is a continuous map
  $$\phi_j:\underleftarrow{\textrm{lim}}\,\maF_0\maA/\maF_{-j}\maA\hat{\otimes}
  \underleftarrow{\textrm{lim}}\,\maF_0\maA/\maF_{-j}\maA\rightarrow\maF_0\maA/\maF_{-j}\maA.$$
  Using the universal property of $\underleftarrow{\textrm{lim}}$, there is
  thus a unique continuous map on
  $\phi:\underleftarrow{\textrm{lim}}\,\maF_0\maA/\maF_{-j}\maA\hat{\otimes}
  \underleftarrow{\textrm{lim}}\,\maF_0\maA/\maF_{-j}\maA\rightarrow
  \underleftarrow{\textrm{lim}}\, \maF_0\maA/\maF_{-j}\maA=\maA_0$.
To complete the proof, we must verify that the map $\phi$ is indeed
the multiplication map. But multiplication on $\maA_0$ composed with
the projection $\maA_0\rightarrow \maF_0\maA/\maF_{\!-\!1}\maA$ is
just the map $\phi_j$ above and hence the multiplication must be
$\phi$ by uniqueness (as algebraic maps) on the projective limit.

\center{\small{ $
    \xymatrix{{\maF_0\maA/\maF_{-j}\maA\hat{\otimes}\maF_0\maA/\maF_{-j}\maA}
      \ar[r] &{\maF_0\maA/\maF_{-j}\maA} \\
      {\vdots}\ar[u] &{\vdots} \ar[u]\\
      {\underleftarrow{\textrm{lim}}\,\maF_0\maA/\maF_{-j}\maA
        \hat{\otimes}\underleftarrow{\textrm{lim}}\,
        \maF_0\maA/\maF_{-j}\maA}\ar[u]\ar[r]^{\phi}\ar[uur]^{\phi_j}&
      {\underleftarrow{\textrm{lim}}\, \maF_0\maA/\maF_{-j}\maA}\ar[u]}$
      }}\center

The separate continuity of the multiplication on $\maA$ can be
proved similarly.
\end{proof}
Although a topologically filtered algebra $\maA$ is a nuclear space
and the completed projective tensor product  $\maA^{\hat{\otimes
}n}$ is again nuclear by Proposition \ref{product}, the Hochschild
boundary map $b$ may not be continuous on $\maA^{\hat{\otimes } n}$.
We could define a Hochschild complex for $\maA$ in the following
fashion.

Let
\[\maF'_p:=\sum_{\sum p_i\leq p} \maF_{p_0}\maA\hat{\otimes}
\maF_{p_1}\maA\hat{\otimes} \ldots \hat{\otimes} \maF_{p_n}\maA.\]
Let
$\maF_pC_n(\maA):=\underleftarrow{\textrm{lim}}\maF'_p/\maF'_{p-f}$.
Then the differentials $b$ and $b'$ are continuous on each
filtration. And if $1\in\maF_0\maA$ then the operators $s,t,N$ and
$B$ induce continuous maps on $\maF_pC_n(\maA)$. Denote by
$\maF_pHH_*(\maA)$ the homology of the complex $\maF_pC_*(\maA)$. We
define the Hochschild homology of $\maA$ to be
$HH_*(\maA)=\underrightarrow{\textrm{lim}}\,\maF_pHH_*(\maA)$.

Since it is often useful to use spectral sequences to compute
homologies for such filtered algebras, it is useful to describe the
topology on the associated graded algebra of $\maA$ by using the
identification
\[\graded(\maA)=\underrightarrow{\textrm{lim}}\bigoplus_{p=-N}^N\maF_p\maA/\maF_{p-1}\maA.\]
And the topology on the Hochschild complex for $\graded(\maA)$ is
defined by \[\maH_n(\graded(\maA)):=\underrightarrow{\textrm{lim}}
\left(\bigoplus_{p=-N}^N\maF_p\maA/\maF_{p-1}\maA\right)^{\otimes
n+1}.\] Again the Hochschild boundary map would be continuous with
respect to this topology. The homogeneous components of
$\graded(\maA)$ can be defined by
\begin{align*}\maH_n(\graded(\maA))_p:=&\underrightarrow{\textrm{lim}}\bigoplus_{p=-N}^N
\hat{\otimes}_{k_j}\maF_{k_j}\maA/\maF_{k_j-1}\maA,\\
&\textrm{where}\,-N\leq k_j\leq N \,\textrm{and}\, \sum k_i\leq p.
\end{align*}

The following examples are of interest.  The algebra of complete
symbols $\maA=\pdoM$ on a closed manifold $M$ is a topologically
filtered algebra with the filtration given by the order of an operator
$\maA=\cup_m \Psi^m(m)/\Psi^{-\infty}(M)$.  If $P$ be an elliptic
operator in $\Psi^11(M)$ then the principal symbol map
$\rho:=\sigma(P)\rightarrow P$ defines a splitting of
$\Psi^{m-1}(M)\hookrightarrow\Psi^m(M)\rightarrow
\smooth(S^*(M)\rho_m$. Furthermore each $\Psi^m(M)/\Psi^{m-j}(M)$ is
isomorphic to $\Gamma^{\infty}(S^*M\times \CC^j:S^*M)$ and hence has a
Frechet topology.  As described before we would topologize $\maA$ by
\[\maA=\underleftarrow{\textrm{lim}}\underrightarrow{\textrm{lim}}
\Psi^m(M)/\Psi^{m-j}(M).\]
Its important here to note that the multiplication on the algebra of
complete symbols is not jointly continuous , but only separately
continuous. 

From above it also follows that
\[\graded(\maA)\simeq\oplus_{m\in\ZZ}\smooth(S^*M)\rho^m
=\smooth(S^*M)\otimes \CC[\rho,\rho{\!-\!1}].\]

\section{Homology of Complete Symbols}\label{chapter-hom-symb}
Let $\maA$ be the algebra of complete symbols on a closed manifold
$M$, that is,
$$\maA(M)=\pdoM=\bigcup_m\Psi^m(M)/\Psi^{-\infty}(M)=\bigcup_m\maF_m\maA(M).$$
A group $\gp$ acting smoothly on $M$ induces an action on the
algebra $\maA(M)$ by $(\gamma, A)\rightarrow \gamma A\gamma^{-1}$.
This action preserves the filtration on the algebra $\maA(M)$. Thus
the algebraic cross-product algebra $\maB(M)=\maA(M)\times \gp$ is
again a filtered algebra with induced filtration

\[\maF_m\maB(M)=\maF_m\maA(M)\rtimes \gp\].
We would compute the Hochschild and cyclic homology of $\maB(M)$
using the spectral sequence associated with the above filtration.
But first we prepare some background in the symplectic structure of
the cotangent bundle $T^*M$.

\section{Symplectic Poisson Structure}\label{section-SPS}

\begin{definition}
Let $(X,\omega)$ be a symplectic manifold. A submanifold $i:Y\hookrightarrow X$ is called a \emph{symplectic submanifold} if the
pull-back $i^*\omega$ is a symplectic form on $Y$. Since $i^*\omega$
is certainly a closed form, it is only required that it is
non-degenerate on $Y$ to be a symplectic form.
\end{definition}

A symplectic form on $X$ gives rise to a Poison structure on
$\smooth(X)$. For $f\in\smooth(X,\RR)$, the Hamiltonian vector field
generate by $f$ is the unique vector field $X_f$ such that
$df=i(X_f)\omega$. Poisson structure on $X$ can now be defined by

\[\{f,g\}_X=\omega(X_f,X_g)\quad\forall f,g\in\smooth(X,\RR).\]




\section{Normal Darboux Coordinates}\label{section-NDC}
Let $X$ be a symplectic manifold and $Y$ be a symmetric submanifold
at a point $x\in Y$. The tangent space $T_xX$ decomposes to
$$T_xX=T_xY\oplus T_xY^{\bot},$$
where $T_xX^{\bot}$ is the symplectic orthogonal to $T_xY$. Let
$TY^{\bot}$ be the subbundle on $Y$ with fiber $T_xY^{\bot}$ at each
$x$. We call $TY^{\bot}$ as the \emph{symplectic normal bundle.}

For a submanifold $Y$ of $X$, we define the inhalator $TY^{\perp}$
of $Y$ is the subbundle of the pull-back $TX_{|Y}$ defined by
$$TY^{\bot}=\{\textbf{Y}\in\gp^{\infty}(TX)_{|Y}\,\textrm{such
that}\,\omega_{|Y}(\textbf{X,Y})=0, \, \forall\textbf{X}\in\gp^{\infty}(TY)\}.$$ The following lemma states that
symplectic Poisson structure on $M$ naturally restricts to the
symplectic Poisson structure on a symplectic submanifold $N$.

\begin{lem} A submanifold $Y$ of the symplectic manifold $(X,\omega)$ is
symplectic if and only if $TY^{\perp} \cap TY=0$
\end{lem}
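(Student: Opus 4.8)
The plan is to reduce the statement to pointwise linear algebra, since both the symplecticity of $Y$ and the condition $TY^{\perp}\cap TY=0$ are fiberwise conditions along $Y$. Fix a point $x\in Y$ and write $V=T_xX$, equipped with the nondegenerate skew-symmetric form $\omega_x$, and let $W=T_xY\subseteq V$ be the tangent space of the submanifold. By definition, $i^*\omega$ is a symplectic form on $Y$ exactly when the restriction $\omega_x|_W$ is nondegenerate for every $x\in Y$, so it suffices to prove the equivalence at each such $x$.

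First I would identify the \emph{radical} of the restricted form $\omega_x|_W$, namely $\{v\in W\,:\,\omega_x(v,w)=0 \text{ for all } w\in W\}$. Comparing with the definition of the symplectic orthogonal given above, this radical is precisely $W\cap W^{\perp}$, which is the fiber of $TY^{\perp}\cap TY$ at $x$. Since a bilinear form is nondegenerate if and only if its radical is trivial, it follows that $\omega_x|_W$ is nondegenerate if and only if $T_xY\cap (T_xY)^{\perp}=0$. Both implications of the lemma then drop out at once: if $Y$ is symplectic the radical vanishes, giving $TY^{\perp}\cap TY=0$; conversely, if the intersection is zero the radical is trivial and $\omega_x|_W$ is nondegenerate, so $i^*\omega$ is a symplectic form.

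There is no genuine obstacle here beyond bookkeeping; the only point worth flagging is that the argument is entirely pointwise and invokes only the definition of nondegeneracy, not the ambient dimension count. Indeed, since $\omega_x$ is nondegenerate on $V$ one automatically has $\dim W+\dim W^{\perp}=\dim V$, so the vanishing of $W\cap W^{\perp}$ upgrades to the direct sum decomposition $TX|_Y=TY\oplus TY^{\perp}$ recorded at the start of this section. That refinement is not needed for the equivalence itself, but it can be quoted separately as the reason the symplectic normal bundle $TY^{\perp}$ is a genuine complement to $TY$ when $Y$ is symplectic.
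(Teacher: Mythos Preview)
Your argument is correct: the reduction to fiberwise linear algebra and the identification of the radical of $\omega_x|_W$ with $W\cap W^{\perp}$ is exactly what is needed, and both directions follow immediately. The paper itself states this lemma without proof, so there is nothing to compare against; your write-up supplies the standard justification.
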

We assume for the rest of the section that $X$ is a symplectic
manifold and $Y$ a symplectic submanifold of $X$.

Let $\mathcal{U}(x_1,\ldots,x_n,\xi_1,\ldots,\xi_n)$ be a Darboux
coordinate chart at $x$ in $X$. We say $\mathcal{U}$ is \emph{normalDarboux chart} if

$$\mathcal{U}\cap Y=\{x_{k+1}=\cdots=x_n=0=\xi_{k+1}=\cdots=\xi_n\}$$ and

$\mathcal{U}\cap Y$ is a trivializing neighborhood of $TY^{\bot}$
such that $(x_{k+1},\ldots,x_n,\xi_{k+1},\ldots,\xi_n)$ give a
trivialization. This is to say

$$\mathcal{U}\cong T(\mathcal{U}\cap Y)^{\bot}.$$

We use the short hand $\mathcal{U}(\bx_1,\bx_2,\bxi_1,\bxi_2)$ with
$\bx_1=(x_1,\ldots,x_k)$ and $\bx_2=(x_{k+1},\ldots,x_n)$ and so on.


We consider $TY^{\bot}$ as the default normal bundle to $Y$ in $X$.
Any local chart at a point $x\in Y$ would trivialize $TY^{\bot}$ on
$Y$. If $\phi:\mathcal{U}(\bx_1,\bx_2,\bxi_1,\bxi_2)\longrightarrow\mathcal{U}({\bf y}_1,{\bf y}_2,\bfeta_1,\bfeta_2)$ be a change of
normal coordinates then it is a bundle map

$$\phi~:~T(\mathcal{U}\cap Y)^{\bot}\longrightarrow
T(\mathcal{U}\cap Y)^{\bot}$$ (and hance is linear in $\bx_2$ and
$\bxi_2$).

\begin{prop}
Let $Y$ be a symplectic submanifold of $X$. Then there exists an
extension $C^{\infty}(Y)\longrightarrow C^{\infty}(X)$ denoted by
$f\rightarrow \hat{f}$ such that

$\{\hat{f},~\hat{g}\}_{X_{|Y}}=\{f,~g\}_Y$ for all $f,~g\in
C^{\infty}(Y)$.
\end{prop}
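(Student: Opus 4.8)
The plan is to build the extension so that, along $Y$, the function $\hat f$ is constant in the symplectic normal directions; the key observation is that $\{\hat f,\hat g\}_X(y)$ for $y\in Y$ depends only on the one-jets $d\hat f|_y$ and $d\hat g|_y$, so I only need to control these. Concretely I seek an extension satisfying two conditions: (a) $\hat f|_Y=f$, and (b) for every $y\in Y$ the covector $d\hat f|_y$ annihilates the symplectic normal space $T_yY^{\bot}$. In a normal Darboux chart $\maU(\bx_1,\bx_2,\bxi_1,\bxi_2)$, where $Y=\{\bx_2=\bxi_2=0\}$ and $\omega=\sum_i dx_i\wedge d\xi_i$, such an extension is simply $\hat f(\bx_1,\bx_2,\bxi_1,\bxi_2)=f(\bx_1,\bxi_1)$, i.e.\ $f$ pushed off $Y$ constantly in the normal coordinates.

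The heart of the matter is the verification that (a) and (b) force the bracket identity, which I would carry out intrinsically. Fix $y\in Y$ and recall the splitting $T_yX=T_yY\oplus T_yY^{\bot}$ from the preceding discussion, valid because $Y$ is symplectic so that $TY^{\bot}\cap TY=0$. Let $X_f^Y\in T_yY$ be the Hamiltonian vector field of $f$ computed with $\omega_Y=i^*\omega$. I claim the ambient Hamiltonian vector field satisfies $X_{\hat f}(y)=X_f^Y$: for any $w\in T_yX$, decompose $w=w_1+w_2$ with $w_1\in T_yY$ and $w_2\in T_yY^{\bot}$; then $\omega(X_f^Y,w_2)=0$ by symplectic orthogonality and $d\hat f|_y(w_2)=0$ by (b), while $\omega(X_f^Y,w_1)=\omega_Y(X_f^Y,w_1)=df(w_1)=d\hat f|_y(w_1)$ by (a). By nondegeneracy of $\omega$ this identifies $X_{\hat f}(y)$ with $X_f^Y$, whence $\{\hat f,\hat g\}_X(y)=\omega(X_f^Y,X_g^Y)=\omega_Y(X_f^Y,X_g^Y)=\{f,g\}_Y(y)$, as required.

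To globalize the local recipe I would fix a tubular neighborhood embedding of the symplectic normal bundle, $\Phi:TY^{\bot}\supset N_0\hookrightarrow X$, with $\Phi|_Y=\mathrm{id}$ and $d\Phi$ the identity on $TY^{\bot}$ along $Y$; such an embedding exists because, $Y$ being symplectic, $TY^{\bot}$ is an honest complement to $TY$ by the lemma $TY^{\bot}\cap TY=0$, and one may for instance patch the normal Darboux charts $\maU\cong T(\maU\cap Y)^{\bot}$ with a partition of unity. Writing $N=\Phi(N_0)$ and letting $\pi:N\to Y$ be the composite of $\Phi^{-1}$ with the bundle projection, the fibers of $\pi$ are tangent to $TY^{\bot}$ along $Y$, so $\ker d\pi|_y=T_yY^{\bot}$. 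Choosing a cutoff $\chi\in\smooth(X)$ with $\chi\equiv1$ near $Y$ and $\operatorname{supp}\chi\subset N$, I set $\hat f:=\chi\cdot(f\circ\pi)$. Then $\hat f|_Y=f$ gives (a), and since $\hat f=f\circ\pi$ is constant along the fibers of $\pi$ near $Y$, its differential kills $\ker d\pi|_y=T_yY^{\bot}$, giving (b); the previous paragraph then yields the bracket identity.

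I expect the only real obstacle to be condition (b): arranging globally that the extension has vanishing derivative precisely along the symplectic normal bundle $TY^{\bot}$, rather than along some arbitrary transverse complement. This is exactly why $Y$ must be symplectic, since the lemma $TY^{\bot}\cap TY=0$ is what guarantees that $TY^{\bot}$ is a genuine complementary subbundle on which to model the tubular neighborhood. One should also check that if the global extension is assembled from the local charts by a partition of unity, the identity (b) survives the patching; because (b) is a pointwise first-order condition along $Y$ that is preserved under convex combinations, this causes no difficulty, and the $\gp$-equivariance of the construction can be retained by averaging $\chi$ and the chart data over $\gp$.
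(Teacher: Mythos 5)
Your proof is correct, and it rests on the same underlying construction as the paper's: extend $f$ by pulling it back along a projection onto $Y$ whose fibers run in the symplectic-normal directions, inside a tubular neighborhood, then cut off. The two arguments diverge in how they are packaged and verified. The paper works chart by chart: it covers $Y$ by normal Darboux charts $\maU_\alpha$, uses in each chart the projection $\pi_\alpha$ along the symplectic orthogonal, patches the local extensions $\pi_\alpha^*(\varphi_\alpha f)$ with a partition of unity on $Y$, and checks the bracket identity by the coordinate formula $\{f,g\}=\sum_i \left(\partial_{x_i}f\,\partial_{\xi_i}g-\partial_{\xi_i}f\,\partial_{x_i}g\right)$, which collapses to the bracket on $Y$ because the extension is independent of $(\bx_2,\bxi_2)$. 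You instead globalize first --- a single tubular neighborhood modeled on $TY^{\bot}$ with one projection $\pi$ --- and verify intrinsically, by isolating the sufficient first-order conditions (a) $\hat f|_Y=f$ and (b) $d\hat f|_y(T_yY^{\bot})=0$, and showing they force $X_{\hat f}(y)=X_f^Y$, hence the bracket identity. Your criterion (a)--(b) buys two things the paper leaves implicit: it makes stability of the construction under partitions of unity transparent (the error terms $\hat f_\alpha\, d\varphi_\alpha$ cancel along $Y$ since the $\hat f_\alpha$ agree there and $\sum_\alpha d\varphi_\alpha=0$), and it makes the $\gp_\gamma$-equivariance needed for the paper's subsequent corollary immediate, since averaging over the group preserves (a) and (b) ($\gp_\gamma$ acts by symplectomorphisms fixing $Y$, hence preserves $TY^{\bot}$). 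The paper's coordinate check is, in exchange, more elementary: it needs nothing beyond the Darboux formula and avoids the Hamiltonian-vector-field bookkeeping, at the cost of leaving the patching and equivariance claims to the reader.
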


\begin{proof}
Let $\{U_{\alpha}\}$ be a cover of $Y$ by normal Darboux
coordinates. Assume that $\cup_{\alpha}U_{\alpha}$ is a tubular
neighbor of $Y$ and $\varphi_{\alpha}\prec U_{\alpha}\cap Y$ is a
partition of unity subordinate to this cover.

In each coordinate chart, let
$\pi_{\alpha}:U_{\alpha}\longrightarrow U_{\alpha}\cap Y$ be the
projection along the symplectic orthogonal. For a function $f$,
define $f_1=\sum_{\alpha}~{\pi_{\alpha}}^*(\phi_{\alpha}f)$ and
extend $f_1$ to $\hat{f}$ on $X$. [Choose an open set $U$ that
separates $Y$ and $X-\cup_{\alpha}U_{\alpha}\subset U$, and further
use partition of unity.]

The process of extension is independent of the function $f$, and
Poisson bracket can be compared in local charts to check the
property required from the extension as Poison bracket in any
Darboux coordinates is given by

\[\{f,g\}=\sum_i \frac{\partial f}{\partial x_i}\frac{\partial g}{\partial \xi_i}-\frac{\partial f}{\partial \xi_i}\frac{\partial g}{\partial x_i}.\]

\end{proof}

We  call an extension $C^{\infty}(Y)\longrightarrow C^{\infty}(X)$
 a symplectic  extension if for all
$f,~g\in C^{\infty}(Y)$ the  identity
$\{\hat{f},~\hat{g}\}_{X_{|Y}}=\{f,~g\}_Y$ holds.

Let $\gp $ be a finite group acting on $X$ by diffeomorphisms, and
let $Y=X^{\gamma}$ be the fixed point manifold for some $\gamma\in \gp$. Then $\gp $ acts on $T^*X$ by symplectomorphisms. We would
identify $T^*Y=T^*(X^{\gamma})$ with $(T^*X)^{\gamma}$.

\begin{cor}
For the inclusion of the symplectic submanifold
$T^*X^{\gamma}\hookrightarrow T^*X$, there exists a symplectic extension which is $\gp_{\gamma}$ equivariant.
\end{cor}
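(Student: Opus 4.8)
The plan is to deduce the corollary from the preceding proposition by averaging over the finite group $\gp_{\gamma}$. Set $P:=T^*X$ for the ambient symplectic manifold and $N:=T^*X^{\gamma}=(T^*X)^{\gamma}$ for the fixed-point submanifold; by the identification preceding the corollary $N$ is a symplectic submanifold of $P$, and every $g\in\gp_{\gamma}$ commutes with $\gamma$, hence preserves $N$ and acts on $P$ by a symplectomorphism. First I would apply the proposition to produce a symplectic extension $E:\smooth(N)\rightarrow\smooth(P)$, not yet equivariant, and then symmetrize it by
\[
\tilde{E}(f)=\frac{1}{|\gp_{\gamma}|}\sum_{g\in\gp_{\gamma}} g\cdot E(g^{-1}\cdot f),
\]
where $g\cdot(-)$ denotes the action of $g$ on functions. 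Equivariance of $\tilde{E}$ is immediate from the invariance of the average over the group, and since each $g$ restricts to a diffeomorphism of $N$ a short computation gives $\tilde{E}(f)|_{N}=f$, so $\tilde{E}$ is again an extension.

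The real content is the symplectic-extension identity $\{\tilde{E}(f),\tilde{E}(h)\}_{P}|_{N}=\{f,h\}_{N}$. Since the Poisson bracket is bilinear, expanding the averages produces the double sum $\frac{1}{|\gp_{\gamma}|^{2}}\sum_{g_1,g_2}\{g_1\cdot E(g_1^{-1}f),\,g_2\cdot E(g_2^{-1}h)\}_{P}$, and the main obstacle is the off-diagonal terms $g_1\neq g_2$: these pair two \emph{different} group-translated extensions, which are not a priori matched to one another and so are not obviously symplectic extensions in the required sense.

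The observation that dissolves the obstacle is that the symplectic-extension property is a first-order condition along $N$. Concretely, in a normal Darboux chart the bracket $\{u,u'\}=\sum_i\partial_{x_i}u\,\partial_{\xi_i}u'-\partial_{\xi_i}u\,\partial_{x_i}u'$ splits into a tangential part and a part involving derivatives along the symplectic-orthogonal directions $TN^{\perp}$; hence for \emph{any} two extensions $u,u'$ of $f,h$ whose $TN^{\perp}$-derivatives vanish along $N$ one has $\{u,u'\}_{P}|_{N}=\{f,h\}_{N}$. The extension $E$ of the proposition is built from projections along the symplectic orthogonal and so has this vanishing-normal-derivative property, and because each $g\in\gp_{\gamma}$ preserves both $N$ and the symplectic form it preserves $TN^{\perp}$; consequently every translate $g\cdot E(g^{-1}\cdot f)$ is an extension of $f$ whose $TN^{\perp}$-derivatives vanish along $N$. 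Therefore every summand above, diagonal or not, equals $\{f,h\}_{N}$, and averaging returns $\{f,h\}_{N}$, so $\tilde{E}$ is the desired $\gp_{\gamma}$-equivariant symplectic extension. As an alternative I could instead rerun the proof of the proposition with $\gp_{\gamma}$-invariant data — an invariant tubular neighborhood modeled on $TN^{\perp}$, an invariant partition of unity, and the induced equivariant projection along the symplectic orthogonal — where the same first-order criterion guarantees that this single equivariant projection already yields a symplectic extension.
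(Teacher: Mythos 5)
Your argument is correct, but it is worth noting that the paper itself offers no proof of this corollary at all: it is stated as an immediate consequence of the preceding proposition, and the intended argument is evidently the one you mention only as an alternative at the end, namely rerunning the construction of the proposition with $\gp_{\gamma}$-invariant data (an invariant tubular neighborhood of $T^*X^{\gamma}$, an invariant cover by normal Darboux charts, an invariant partition of unity, and the equivariant projections along $T(T^*X^{\gamma})^{\bot}$, all of which exist because $\gp_{\gamma}$ is finite and acts by symplectomorphisms preserving the fixed submanifold). Your primary route --- averaging an arbitrary symplectic extension over $\gp_{\gamma}$ --- is genuinely different and is not automatic, since the symplectic-extension identity is not linear in the extension: expanding $\{\tilde{E}(f),\tilde{E}(h)\}$ produces off-diagonal terms pairing two different translates. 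You correctly identify and resolve the crux: the identity $\{u,u'\}_{T^*X}|_{T^*X^{\gamma}}=\{f,h\}_{T^*X^{\gamma}}$ holds for \emph{any} pair of extensions whose derivatives in the $T(T^*X^{\gamma})^{\bot}$ directions vanish along the submanifold (as the normal Darboux expression of the bracket shows), the extension of the proposition has this property because it is assembled from pullbacks under the projections along the symplectic orthogonal, and the property is preserved by $\gp_{\gamma}$ since each element preserves both the submanifold and $\omega$, hence the symplectic normal bundle. What the averaging argument buys is robustness --- it upgrades any symplectic extension with vanishing normal derivative to an equivariant one without revisiting the construction --- at the cost of having to isolate this first-order criterion; the equivariant-construction route is shorter given that the proposition's proof is already in hand. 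Either way the corollary is established.
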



------------------

\section{Canonical Homology of Symplectic
Manifolds}\label{section-CHSM}
Let $X$ be a $2n$-dimensional symplectic manifold. Let
$I:T^*X\rightarrow TX$ be the isomorphism induced by symplectic form
$\omega$, namely, if $\alpha\in T^*X$ and $\xi\in TX$ then $\langle
\alpha,\xi\rangle=\omega(\xi,I(\alpha))$. Then Poisson tensor on $X$
is defined as $G=-\wedge^2I(\omega)$. Let $i(G)$ be an interior
product by $G$, that is to consider a bilinear pairing $G:T^*X\times T^*X\rightarrow \smooth(X)$. For all $k\geq0$, denote by $\wedge ^KG$ the associated bilinear pairing $\wedge^kG:\wedge^k T^*x\times \wedge^k T^*X\rightarrow \smooth(X)$, which is $(-1)^k$ symmetric.
Then $G$ can be used to define the Poisson structure on $X$ instead as $\{f,g\}:=i(G)(df\wedge dg)$.
Set $v_X:=\omega^n/n!$ as the volume form on $X$. The symplectic $*$
operator is the map $*:\Omega^k(X)\rightarrow\Omega^{2n-k}(X)$ which
is defined by
\[\beta\wedge*\alpha=\wedge^kG(\beta,\alpha)v_X.\]

\begin{definition}[Koszul complex]
Let $\delta=i(G)\circ d-d\circ i(G)$. Then
\[\delta:\Omega^k(M)\rightarrow \Omega^{k-1}(M),\]
and is defined in local expression as
\begin{align*}\label{delta}
  \delta(f_0df_1\wedge \ldots &\wedge df_k)
  =\sum_{1\leq i\leq k} (-1)^{i-1}\{f_0,f_i\}_Mdf_1\wedge \ldots \wedge \widehat{df_i} \ldots \wedge df_k\\
&\\
 & +\sum_{1\leq i<j\leq k}(-1)^{i+j-1}f_0d\{f_i,f_j\}_M\wedge df_1 \wedge
  \ldots\widehat{df_i}\wedge \ldots \wedge \widehat{df_j}\wedge \ldots \wedge
  df_k.
\end{align*}

Then $\delta^2=0$ is in fact a differential and we call the
complex$(\Omega^*(M),\delta)$ as the Koszul complex of the Poisson
manifold $(M<\{\})$, and its homology as Poisson homology and denote
it by $HK_*(M)$.
\end{definition}

Although $\delta$ can be defined for any Poisson
manifold, we would restrict to the symplectic case. The following
results were proved in \cite{Brylinski}.
\begin{lem}$X$ be a symplectic manifold. Then $\delta,d$ and $*$ satisfy
the following relations,
\begin{enumerate}
\item If $\alpha\in\Omega^k(X)$ then $**\alpha=\alpha$
\item $\delta=(-1)^{k+1}*d*$ on $\Omega^k(X)$.
\item $d\delta+\delta d=0$ aa a  map on $\Omega^k(X)$.
\end{enumerate}
\end{lem}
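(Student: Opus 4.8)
The plan is to prove all three identities locally, reducing the global statements to pointwise computations in a Darboux chart. Since $d$, $\delta=i(G)d-d\,i(G)$ and $*$ are all determined by their local expressions, it suffices to fix a Darboux coordinate chart $\maU(x_1,\dots,x_n,\xi_1,\dots,\xi_n)$ in which $\omega=\sum_i dx_i\wedge d\xi_i$ and $v_X=dx_1\wedge d\xi_1\wedge\dots\wedge dx_n\wedge d\xi_n$, and to verify each relation on the monomial forms $dx_I\wedge d\xi_J$ that span $\Omega^*(X)$ over $\smooth(X)$.

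For item (1), the identity $**\alpha=\alpha$ is pointwise and purely linear-algebraic: at each point it is a statement about the symplectic vector space $(T_x^*X,G)$. In Darboux coordinates the pairing $\wedge^kG$ is the one induced by the inverse symplectic matrix, so the defining relation $\beta\wedge*\alpha=\wedge^kG(\beta,\alpha)\,v_X$ determines $*$ explicitly on the monomial basis, and I would then check directly that applying $*$ twice returns the original monomial. Unlike the Riemannian Hodge star, no signature- or dimension-dependent sign survives, because the nondegenerate form coming from $\omega$ together with the canonical orientation given by $v_X$ makes $*$ a genuine involution; this is recorded as a short computation on $dx_I\wedge d\xi_J$.

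The crux is item (2), $\delta=(-1)^{k+1}*d*$ on $\Omega^k(X)$. Here I would compare the two operators on the monomials $f_0\,df_1\wedge\dots\wedge df_k$, using on the left the explicit local formula for $\delta$ displayed above and on the right the definition of $*$ applied twice around $d$. The first-order terms $\{f_0,f_i\}$ produced by $\delta$ must be matched against the contribution of $d$ acting on $*(f_0\,df_1\wedge\dots\wedge df_k)$, and the second-order cross-terms $f_0\,d\{f_i,f_j\}$ against the remaining pieces; making the single sign $(-1)^{k+1}$ emerge uniformly in $k$ is the delicate point. A cleaner way to organize the bookkeeping is the Lefschetz $sl_2$ structure on $\Omega^*(X)$ generated by $L=\omega\wedge(\cdot)$ and its adjoint $\Lambda=i(G)$ (up to a constant): the symplectic star is expressible through this $sl_2$-action, and since $d$ raises the $L$-degree in a controlled way, combining this with the identification of $\delta$ as the graded commutator $[i(G),d]$ yields the relation with the correct sign.

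Given (1) and (2), item (3) is a formal sign-chase. From (2) and $**\alpha=\alpha$ one obtains $*\delta=(-1)^{k+1}d*$ on $\Omega^k$; substituting $\delta=(-1)^{k+1}*d*$ into $d\delta+\delta d$ and invoking $d^2=0$ together with $**=1$ makes the two terms cancel. I expect (2) to be the main obstacle: items (1) and (3) reduce to short algebraic verifications, whereas (2) requires carefully tracking the interaction of the first- and second-order parts of $\delta$ with the star operator, which is why the Lefschetz $sl_2$ formalism is the more reliable tool for pinning down the sign.
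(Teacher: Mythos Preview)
The paper does not actually prove this lemma: it simply cites Brylinski \cite{Brylinski}, where these identities are established. So there is no ``paper's own proof'' to compare with; your proposal would supply an argument the paper omits. Your approach to (1) and (2) via Darboux coordinates and the Lefschetz $sl_2$-structure is essentially Brylinski's, and is fine.

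There is, however, a genuine gap in your treatment of (3). You claim that substituting $\delta=(-1)^{k+1}*d*$ into $d\delta+\delta d$ and using $d^2=0$ and $**=1$ makes the two terms cancel. It does not: carrying this out on $\Omega^k(X)$ gives
\[
d\delta+\delta d=(-1)^{k+1}\bigl(d*d*-*d*d\bigr),
\]
and there is no formal reason for $d*d*$ to equal $*d*d$. Indeed, if this followed from $**=1$ and $d^2=0$ alone, the same argument would apply to any involution $*$ interchanging degrees $k$ and $2n-k$; but rescaling the Riemannian Hodge star to an honest involution still produces an operator of the form $d\delta'+\delta'd$ that is a nonzero multiple of the Hodge Laplacian. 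So (1), (2), and $d^2=0$ do \emph{not} imply (3).

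The correct easy proof of (3) uses the defining formula $\delta=i(G)\,d-d\,i(G)$ directly:
\[
d\delta = d\,i(G)\,d - d^2 i(G) = d\,i(G)\,d,\qquad
\delta d = i(G)\,d^2 - d\,i(G)\,d = -\,d\,i(G)\,d,
\]
hence $d\delta+\delta d=0$. Equivalently, this is the graded Jacobi identity applied to $[d,[i(G),d]]$. Replace your final paragraph with this computation and the proposal is complete.
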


A direct consequence of the lemma above is the following.
\begin{prop}[The Poisson Homology of a symplectic Poisson manifold]
Let $(M,\omega)$ be a compact symplectic manifold of dimension $2n$.
Then $(-1)^{k+1}*$ is a chain map between $(\Omega^k(X),\delta)$ and
$ (\Omega^{2n-k}(X),d)$, and the symplectic Poisson homology is
given by
\[HK_k(M)=H^{2n-k}_{deRham}(M).\]
\end{prop}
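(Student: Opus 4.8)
The plan is to read the entire statement off the symplectic Hodge relations recorded in the preceding lemma, so that the argument becomes essentially formal and all the geometric content is already in hand. The symplectic star operator $*\colon\Omega^k(X)\to\Omega^{2n-k}(X)$ is a bijection with $**=\mathrm{id}$, and the point is that, after reversing the grading $k\mapsto 2n-k$, it carries the Poisson (Koszul) differential $\delta$ into the de~Rham differential $d$ up to sign. Granting this, $*$ becomes an isomorphism of the two complexes and hence induces isomorphisms on homology, which is exactly the claimed identification $HK_k(M)\simeq H^{2n-k}_{deRham}(M)$.

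Concretely, I would first derive the intertwining relation from the lemma. Composing the identity $\delta=(-1)^{k+1}*d*$ on $\Omega^k$ with $*$ on the left and using $**=\mathrm{id}$ yields $*\delta=(-1)^{k+1}d*$ as maps $\Omega^k\to\Omega^{2n-k+1}$. Composing instead on the right with $*$ and reindexing gives the symmetric statement $\delta*=(-1)^{k+1}*d$ on $\Omega^{2n-k}$. The first relation shows that if $\delta\alpha=0$ then $d(*\alpha)=0$, and that $*(\delta\beta)$ lies in the image of $d$; thus $*$ sends $\delta$-cycles to $d$-cocycles and $\delta$-boundaries to $d$-coboundaries, and so descends to a map $HK_k(M)\to H^{2n-k}_{deRham}(M)$. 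The second relation shows that the inverse map, which is again $*$, sends $d$-cocycles to $\delta$-cycles and $d$-coboundaries to $\delta$-boundaries. Hence the two induced maps on homology are mutually inverse, and $(-1)^{k+1}*$ realises the isomorphism $HK_k(M)\simeq H^{2n-k}_{deRham}(M)$.

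The only genuine bookkeeping is the sign: a single factor $(-1)^{k+1}$ does not strictly commute with the differentials in every degree, so to make $*$ a literal chain map one must absorb a degree-dependent sign $\varepsilon_k$ satisfying the recursion $\varepsilon_k=(-1)^{k+1}\varepsilon_{k-1}$. This is immaterial for the homology statement, since a nonzero scalar in each degree preserves cycles and boundaries, and it is the only place where care is needed; the genuine input, namely $**=\mathrm{id}$ and $\delta=(-1)^{k+1}*d*$, is supplied by the preceding lemma, for which I would rely on \cite{Brylinski}. I expect no further obstacle, and I would only add the remark that the construction is natural in $(X,\omega)$ and, when $\gp$ acts by symplectomorphisms, automatically $\gp$-equivariant, since $*$ is built solely from $\omega$.
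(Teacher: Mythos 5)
Your argument is correct and is exactly the route the paper takes: the proposition is stated there as a direct consequence of the preceding lemma ($**=\mathrm{id}$, $\delta=(-1)^{k+1}*d*$), and you have simply written out the intertwining relations and the resulting isomorphism on homology that the paper leaves implicit. Your remark about absorbing a degree-dependent sign $\varepsilon_k$ to make $*$ a literal chain map is a worthwhile clarification of the paper's slightly loose phrasing, but it does not constitute a different approach.
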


Now let $X=T^*M/\{0\}$ be the cotangent bundle minus the zero
section. Then there is a $\RR^+$ action on $X$.
Let $\Xi$ be the Euler vector field on $X=T^*M/\{0\}$ generated by
the cation of $\RR^+$. The canonical one form on $X$ is the form
$\alpha=i(\Xi)\omega$. Let
$\varepsilon(\alpha):\Omega^k(X)\rightarrow\Omega^{k+1}(X)$ be the
exterior multiplication by $\alpha$. We would find the following
result useful.
\begin{lem}\label{magic} Let $\maL_Y$ denote the Lie derivative with respect
 to a vector field $Y$. On k-forms $\Omega^k$, the operator
 \[\delta\varepsilon(\alpha)+\varepsilon(\alpha)\delta=\maL_{\Xi}+n-k\]
\end{lem}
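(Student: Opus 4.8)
The plan is to recognize the asserted identity as a graded-commutator relation and to derive it from the graded Jacobi identity, exactly as one derives the ordinary Cartan homotopy formula. Since $\delta$ and $\varepsilon(\alpha)$ both have odd degree ($-1$ and $+1$ respectively), the left-hand side $\delta\varepsilon(\alpha)+\varepsilon(\alpha)\delta$ is precisely the graded commutator $[\delta,\varepsilon(\alpha)]$, and it suffices to compute this. First I would record two structural facts. Because $\alpha=i(\Xi)\omega$ and $\omega$ is closed, Cartan's formula gives $d\alpha=\maL_{\Xi}\omega$, and since the canonical symplectic form is homogeneous of degree one along the fibres one has $\maL_{\Xi}\omega=\omega$; hence $d\alpha=\omega$. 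Second, by the very definition $\delta=i(G)\circ d-d\circ i(G)$, and since $i(G)$ is an operator of even degree $-2$, this is the graded commutator $\delta=[i(G),d]$.

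Next I would expand $[\delta,\varepsilon(\alpha)]=[[i(G),d],\varepsilon(\alpha)]$ using the graded Jacobi identity for the three operators $i(G)$ (even), $d$ (odd) and $\varepsilon(\alpha)$ (odd). This yields
\[
[\delta,\varepsilon(\alpha)]=[i(G),[d,\varepsilon(\alpha)]]-[d,[i(G),\varepsilon(\alpha)]],
\]
so the problem splits into two inner commutators. For the first, $d(\alpha\wedge\beta)=d\alpha\wedge\beta-\alpha\wedge d\beta=\omega\wedge\beta-\alpha\wedge d\beta$ shows that $[d,\varepsilon(\alpha)]=\varepsilon(d\alpha)=\varepsilon(\omega)$, exterior multiplication by $\omega$. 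For the second, writing $G=\sum_a X_a\wedge Y_a$ and contracting with the one-form $\alpha$ through the elementary rule $i(Y)\varepsilon(\alpha)+\varepsilon(\alpha)i(Y)=\alpha(Y)$ collapses the double contraction to a single one: $[i(G),\varepsilon(\alpha)]=i(V)$, where $V$ is the vector field obtained by pairing $\alpha$ into $G$. A short computation in Darboux coordinates (or directly from the defining relation $\langle\alpha,v\rangle=\omega(v,I(\alpha))$) identifies this vector field as $V=I(\alpha)=-\Xi$.

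It then remains to evaluate the two resulting operators. By Cartan's magic formula $[d,i(V)]=di(V)+i(V)d=\maL_V=\maL_{-\Xi}=-\maL_{\Xi}$, so the second term contributes $+\maL_{\Xi}$. For the first term I would invoke the symplectic Lefschetz $\mathfrak{sl}_2$-relation: exterior multiplication by $\omega$ and contraction with the Poisson bivector $G$ satisfy $[i(G),\varepsilon(\omega)]=(n-k)$ on $\Omega^k(X)$, which one checks at once in Darboux coordinates (for instance $i(G)\omega=n$ recovers the case $k=0$). Combining, $[\delta,\varepsilon(\alpha)]=(n-k)+\maL_{\Xi}$, which is the assertion. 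The only genuinely delicate point is the bookkeeping of signs and conventions---the chosen normalization of $i(G)$ (fixed so that $i(G)(df\wedge dg)=\{f,g\}$), the sign in $I(\alpha)=-\Xi$, and the constant in the Lefschetz relation---all of which must be pinned down consistently; once they are, every step is a short and standard manipulation.
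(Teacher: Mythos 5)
Your argument is correct, and it supplies a proof that the paper itself omits: Lemma~\ref{magic} is simply quoted (it goes back to Brylinski and to Brylinski--Getzler), so there is no in-paper proof to compare against. Your route --- reading both sides as graded commutators, expanding $[[i(G),d],\varepsilon(\alpha)]$ by the graded Jacobi identity, and evaluating the two inner brackets as $[d,\varepsilon(\alpha)]=\varepsilon(d\alpha)=\varepsilon(\omega)$ and $[i(G),\varepsilon(\alpha)]=i(I(\alpha))=i(-\Xi)$, then finishing with Cartan's formula and the pointwise Lefschetz relation $[i(G),\varepsilon(\omega)]=(n-k)$ on $\Omega^k$ --- is exactly the standard derivation, and every step checks out. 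The one place you rightly flag as delicate is the sign bookkeeping: the paper's conventions for $\omega$, $I$, $G$ and the local formula for $\{\cdot,\cdot\}$ are not mutually consistent as written, so to pin the signs down you should verify the identity on a generator, e.g.\ on $f\in\Omega^0$ homogeneous of degree $l$ one computes directly that $\delta(f\alpha)=(l+n)f=(\maL_\Xi+n-0)f$, which fixes the normalization of $G$ and confirms that your two contributions $(n-k)$ and $+\maL_\Xi$ carry the right signs. With that base-case check added, the proof is complete.
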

In particular, if $\beta$ is a k-form which is homogeneous of degree  $l$, then $\maL_{\Xi}\beta=l\beta$.

\section{Spectral Sequence for the
Cross-Product}\label{section-SSCP}

We come back to $\maB(M)=\maA(M)\rtimes \gp=\pdoM\rtimes \gp$ the cross-product algebra. Since the group action on $\maA(M)$ preserves
the filtration, $\maB(M)$ would again be a filtered algebra
$\maF_i\maB(M)=\maF_i\maA(M)\rtimes \gp$. Let $P$ be a positive elliptic operator invariant under the $\gp$ action. Let $\rho$ be
the symbol for $P$. Then for any $Q\in \maF_m\maA(M)$ an order $m$
operator, the map $\maF_m\maA(M).\maF_{m-1}\maA(M)\ni[Q]\rightarrow
[Q]/\rho^m|_{S^*M}\in \smooth(S^*M)$ identifies the two Frechet spaces. The associated graded algebra would then be

\begin{multline*}
\graded(\maB(M))\simeq \bigoplus_{l\in \ZZ}\dot{\smooth}(T^*M/\{0\})\\
 \bigoplus_{l\in \ZZ}
\left( \smooth(S^*M)\rtimes \gp \right) \rho^l\simeq
\smooth(S^*M)\rtimes \gp\otimes \CC[\rho,\rho^{-1}].
\end{multline*}

To compute the Hochschild homology of $\maB(M)$, we consider the
spectral sequence associated to this filtration.
The $E^1$ term in the Hochschild homology spectral sequence of
$\maB(M)$ would be
\[E^1=HH_*(\graded(\maB(M)))\simeq HH_*((\smooth(S^*M)\rtimes \gp)\otimes
\CC[\rho,\rho^{-1}]) \]\label{HKR2} which, by Theorem \ref{HKR}, can be identified as
$E^1=\oplus_{\langle\gamma\rangle}\dot{\Omega}^*(T^*M/\{0\}^\gamma)^{\gp_{\gamma}}
 =\oplus_{\langle\gamma\rangle}E^1_{\gamma}$. The theorem below describes the $d^1$
 differential under this identification.

\begin{theo}
For every $k\geq 0$, we have the following diagram
\begin{center}
   $ \xymatrix{ {E^1_{k\gamma}=HH_k(\graded(\maB(M))\!)_{\gamma}}     \ar[r]^{\chi_k}     \ar[d]^{d^1}& {\dot{\Omega}^k(T^*M^{\gamma}/\{0\} )}  \ar[d]^{\delta}   \\
     {E^1_{k\!-\!1\gamma}\!=\!HH_{k\!-\!1}\!(\graded(\maB(M))\!)_{\gamma}}
     \ar[r]^{\chi\!_{k\!-\!1}} &{\dot{\Omega}^{k\!-\!1}(T^*M^{\gamma}/\{0\}) } } $
\end{center}
where the differential $\delta$ is given by
\begin{multline*}
  \delta(f_0df_1\wedge \ldots \wedge df_k)=\sum_{1\leq i\leq k}
  (-1)^{i-1}\{f_0,f_i\}_{T^*M^{\gamma}}df_1\wedge \ldots \wedge \hat{df_i} \ldots \wedge df_k\\
  +\sum_{1\leq i<j\leq k}(-1)^{i+j-1}f_0d\{f_i,f_j\}_{T^*M^{\gamma}}\wedge
  df_1 \wedge \ldots\hat{df_i} \wedge \ldots \wedge \hat{df_j}\wedge \ldots
  \wedge df_k.
\end{multline*}
\end{theo}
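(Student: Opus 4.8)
The plan is to identify $d^1$ as the connecting homomorphism of the order-filtered Hochschild complex and then to compute it explicitly from the subprincipal part of the symbol calculus. Recall that $b$ respects the order filtration and that $E^1=HH_*(\graded(\maB(M)))$ is computed by the graded (hence, to leading order, commutative) product, with $\chi$ realizing this identification through the equivariant Hochschild--Kostant--Rosenberg isomorphism of Theorem \ref{HKR} and Corollary \ref{fixed_point}. To compute $d^1$ on a class one lifts a representative $a_0\otimes\cdots\otimes a_k$ to the full complex, applies the genuine Hochschild differential $b$, observes that the leading-order part vanishes because the class is a $d^0$-cycle, and reads off the class of the remaining term, which lies exactly one step lower in the filtration. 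The single analytic input is the standard expansion of the symbol of a product: for $a\in\maF_p\maA(M)$ and $b\in\maF_q\maA(M)$ the commutator $ab-ba$ lies in $\maF_{p+q-1}\maA(M)$ and its principal symbol there is $\tfrac{1}{i}\{\sigma(a),\sigma(b)\}$, the Poisson bracket of the leading symbols.

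Carrying this out, I would take the representative in the image of the antisymmetrization $E_k$ of Theorem \ref{HKR} and push $b$ through it. Each interior product $a_ia_{i+1}$ in the Hochschild differential decomposes into its graded (symmetric) part, which cancels upon antisymmetrization, and its order-lowering antisymmetric part, which is precisely a Poisson bracket. Summing over the permutations in $E_k$, the products $a_0a_1$ and the wrapped term $a_ka_0$ in which $a_0$ participates produce the full first sum $\sum_{1\le i\le k}(-1)^{i-1}\{f_0,f_i\}\,df_1\wedge\cdots\widehat{df_i}\cdots\wedge df_k$, while the genuinely interior products $a_ia_{i+1}$ produce, once $E_k$ repackages the scalar $\{f_i,f_j\}$ as $f_0\,d\{f_i,f_j\}\wedge\cdots$, the second sum. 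This is the equivariant analogue of the Brylinski--Getzler computation \cite{Br-Gz}, to which it reduces verbatim on the trivial conjugacy class; the signs and reindexing are bookkeeping once the symbol expansion is in hand.

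The genuinely new point is that the bracket which survives is the \emph{intrinsic} Poisson bracket $\{\cdot,\cdot\}_{T^*M^\gamma}$ of the fixed point manifold, and not merely the restriction to $T^*M^\gamma$ of the ambient bracket on $T^*M$. To see this I would fix a conjugacy class with representative $\gamma$ and work in the normal Darboux coordinates of Section \ref{section-NDC} adapted to the symplectic submanifold $T^*M^\gamma\hookrightarrow T^*M$, writing $(\bx_1,\bxi_1)$ tangent to $T^*M^\gamma$ and $(\bx_2,\bxi_2)$ normal to it. The map $\chi$ averages over $\gp_\gamma$ and restricts to $M^\gamma$; since the normal coordinates parametrize the $(1-\gamma)$-eigenspace, the derivatives in the normal directions are either killed by the restriction or paired against vanishing factors, so that only the tangential part of $\{f,g\}=\sum_i(\partial_{x_i}f\,\partial_{\xi_i}g-\partial_{\xi_i}f\,\partial_{x_i}g)$ survives. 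By the $\gp_\gamma$-equivariant symplectic extension constructed at the end of Section \ref{section-NDC}, this tangential part is exactly $\{f|_{T^*M^\gamma},g|_{T^*M^\gamma}\}_{T^*M^\gamma}$, which is what the target $\delta$ requires.

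The main obstacle I anticipate is precisely this last identification: controlling the normal-direction contributions to the product expansion and confirming that they drop out under the averaging-and-restriction map $\chi$, so that the intrinsic symplectic structure of $T^*M^\gamma$ governs $d^1$. This is where the symplectic submanifold hypothesis $TY^\perp\cap TY=0$ and the equivariant extension are indispensable, and where the argument departs from the non-equivariant case. A secondary, purely technical check is that the twisting by the group elements $g_i$ in the cross-product does not interfere with the order-by-order symbol expansion; since the $\gp$-action preserves the filtration and acts by symplectomorphisms, it commutes with taking principal symbols and Poisson brackets, so this compatibility is routine but must be recorded.
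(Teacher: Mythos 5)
Your proposal is correct and follows essentially the same route as the paper: lift the antisymmetrized HKR representative using $\gp_\gamma$-invariant operators whose symbols are the equivariant \emph{symplectic extensions} of the $f_i$, apply $b$, and identify the order-lowering commutator terms (the $a_0$-terms giving the first sum, the interior terms the second) with Poisson brackets. The one point to state cleanly is that the intrinsic bracket $\{\cdot,\cdot\}_{T^*M^\gamma}$ appears not because normal derivatives are generically killed by restriction, but because the lifts are chosen from the start to be symplectic extensions, for which $\{\hat f,\hat g\}_{T^*M}|_{T^*M^\gamma}=\{f,g\}_{T^*M^\gamma}$ holds by construction --- which is exactly how you conclude, so the argument goes through.
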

\begin{proof}
Let $f_i\in\smooth(T^*M^{\gamma}/\{0\})\, , 0\leq i\leq k$ be
homogeneous functions of degree $p_i$ and $\gp_{\gamma}$ invariant.
Let $p=p_0+p_1+\ldots +p_k$. Also, let \[\xi=f_0df_1df_2\ldots
df_k\in \dot{\Omega}^k(T^*M^{\gamma}/\{0\}).\] Then the k-form $\xi$
is defined on the cotangent space minus the 0-section of the fixed
point manifold $M^{\gamma}$ is homogeneous of degree p, and is
$\gp_{\gamma}$ invariant. More importantly, $\xi$ generates
$\dot{\Omega}^k(T^*M^{\gamma}/\{0\})$. By the isomorphism
(\ref{HKR2}), it gives a class in
$HH_k(\smooth(T^*M/\{0\}))_{\gamma}$ represented by
\[E_k(\xi)=\sum_{\pi\in \mathcal{S}_k} \epsilon(\pi)\tilde{f_0}\gamma\otimes
\tilde{f_{\pi(1)}}e\otimes \tilde{f_{\pi(2)}}e\ldots\otimes
\tilde{f_{\pi(n)}}e,\] where $\tilde{f_i}$ is a symplectic extension
of $f_i$. Since $\gp_{\gamma}$ action on $T^*M/\{0\}$ are through
symplectomorphisms, the extensions $\tilde{f_i}$ are in fact
invariant extensions of $f_i$ to whole of $T^*M/\{0\}$.
We only have to check that $\delta = \chi\circ d^1 \circ E$. To
evaluate $d^1$ on $HH_*(\graded(\maB(M)))_{\gamma}$, we must lift
the form $\xi$ to a tensor. We now choose operators $A_0,A_1,\ldots
A_k\in\maF_{p_i}\maA(M)=\Psi^{p_i}(M)/\Psi^{-\infty}(M)$, which are
$\gp_{\gamma}$ invariant and such that $\sigma(A_j)=\tilde{f_j}$.
(This can be done by averaging on a $\gp$ equivariant splitting of
$\sigma$.) Using the above choice of lifting,
 \[\sigma^{-1}(E_k(\xi))=\sum_{\pi\in \mathcal{S}_k} \epsilon(\pi)
 A_0\gamma\otimes A_{\pi(1)}e\otimes A_{\pi(2)}e\ldots\otimes A_{\pi(n)e}.\]
On applying $b$ the Hochschild differential, there are three
different kinds of expressions for each $\pi\in\mathcal{S}_k$. So
the resultant expression can be broken up as the sum of
\[\textbf{A}= \sum_{\pi\in
\mathcal{S}_k}\epsilon(\pi)A_0A_{\pi(1)}\gamma\otimes
A_{\pi(2)}e\otimes \ldots \otimes A_{\pi(k)} e ,\]
\[\textbf{B}= \sum_{\pi\in \mathcal{S}_k}\sum_{i=1}^{k-1}(-1)^{-1}
\epsilon(\pi)A_0\gamma\otimes\ldots\otimes
A_{\pi(i)}A_{\pi(i+1)}e\otimes \ldots \otimes A_{\pi(k)}e ,\]
\[\textbf{C}= \sum_{\pi\in \mathcal{S}_k}(-1)^k\epsilon(\pi)A_{\pi(k)}A_0
\gamma\otimes A_{\pi(1)}e\otimes \ldots \otimes A_{\pi({k-1})}e .\]
(Remember that $A_i$'s have been chosen to be invariant under
$\gamma$.)
By replacing $\pi$ in \textbf{A} by $\pi\tau$ where $\tau$ is a
cyclic permutation, we could rewrite it as
\[\textbf{A}= \sum_{\pi\in
  \mathcal{S}_k}(-1)^{k+1}\epsilon(\pi)A_0A_{\pi(k)}\gamma\otimes
A_{\pi(1)}e\otimes \ldots \otimes A_{\pi({k-1})}e .\] Thus
\[\textbf{A}+\textbf{C}= \sum_{\pi\in \mathcal{S}_k}(-1)^{k+1}
\epsilon(\pi)\{A_0,A_{\pi(k)}\}\gamma\otimes A_{\pi(1)}e\otimes
\ldots \otimes A_{\pi({k-1})}e
\]
or in $HH_*(\graded(\maB(M))$, it is represented by
\[\sum_{\pi\in \mathcal{S}_k}(-1)^{k+1}\epsilon(\pi)\{f_0,f_{\pi(k)}\}\gamma
\otimes f_{\pi(1)}e\otimes \ldots \otimes f_{\pi({k-1})}e.
\]
For any permutation $\pi$ for which $\pi(n)=i$ is fixed, the image
of the tensor
$$(-1)^{k+1}\epsilon(\pi)\{f_0,f_{\pi(k)}\}\gamma\otimes
f_{\pi(1)}e\otimes \ldots \otimes f_{\pi({k-1})}e$$
under $\chi$ of
Theorem \ref{HKR2} is the same namely
\[\frac{1}{k\!-\!1!}(-1)^i\{f_0,f_i\}df_1df_2\ldots \hat{df_i}\ldots df_n.\]
(Again remember that all the $f_j$'s are $\gp_{\gamma}$ invariant.)
There are $(k\!-\!1)!$ permutations such that for a fixed $i$,
$\pi(n)=i$, and therefore the parts corresponding to \textbf{A} and
\textbf{C} in $d^1$ become
\[\sum_{1\leq i\leq k} (-1)^{i-1}\{f_0,f_i\}df_1\wedge \ldots\wedge  df_i \ldots \wedge df_k.\]
The summand \textbf{B} pairs each $\pi$ with the transpositions
$\pi(i\, i\!+\!1)$
\[\textbf{B}= \frac{1}{2}\sum_{\pi\in \mathcal{S}_k}\sum_{i=1}^{k-1}(-1)^{-1}
\epsilon(\pi)A_0\gamma\otimes\ldots\otimes
\{A_{\pi(i)},A_{\pi(i+1)}\}e\otimes \ldots \otimes A_{\pi(k)}e\]
and so $\sigma(\textbf{B})= \frac{1}{2}\sum_{\pi\in
\mathcal{S}_k}\sum_{i=1}^{k-1}(-1)^{i-1}\epsilon(\pi)f_0\gamma\otimes\ldots\otimes
\{f_{\pi(i)},f_{\pi(i+1)}\}e\otimes \ldots \otimes f_{\pi(k)}e.$ All
pairs $(\pi,i)$ such that the set $\{\pi(i),\pi(i-1)\}$ is the same
as the set $\{m,n\},$ $m<n$ have the sane image under $\chi$, namely
the form
\[\frac{(-1)^{m+n}}{k\!-\!1}f_0d\{f_m,f_n\}\wedge df_1\wedge \ldots \wedge
\hat{df_m}\wedge \ldots \wedge \hat{df_n}\wedge \ldots \wedge
df_k.\]
As there would be $2(k-1)!$ such pairs for each $\{m,n\}$, the terms
in \textbf{B} would map to the remainder of $\delta$
\[\sum_{1\leq i<j\leq k}(-1)^{i+j-1}f_0d\{f_i,f_j\} \wedge df_1\wedge
\ldots\hat{df_i}\wedge \ldots \hat{df_j}\wedge \ldots \wedge df_k
.\]
\end{proof}
\begin{cor}\label{HH} We have
\[HH_*(\pdoM\rtimes \gp)=\sum_{\langle \gamma\rangle}H^{2k_{\gamma}-*}
(S^*M^{\gamma}\times S^1)^{\gp_{\gamma}},\] where
$k_{\gamma}=dim(M^{\gamma})$.
\end{cor}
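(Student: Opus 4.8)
The plan is to run the order-filtration spectral sequence to its end, taking as input the description of the $E^1$ page and the $d^1$ differential furnished by the preceding theorem. For each conjugacy class $\langle\gamma\rangle$ that theorem gives $E^1_\gamma = \dot{\Omega}^*(T^*M^{\gamma}/\{0\})^{\gp_\gamma}$ with $d^1$ equal to the symplectic Koszul differential $\delta$ of Section \ref{section-CHSM}. Consequently the second page is the homogeneous, $\gp_\gamma$-invariant Poisson homology
\[
E^2_{k,\gamma} = HK_k(T^*M^{\gamma}/\{0\})^{\gp_\gamma},
\]
and the first task is to compute it.

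Here I would invoke Lemma \ref{magic}. Writing $n = k_\gamma$ for the half-dimension of $X := T^*M^{\gamma}/\{0\}$, the identity $\delta\varepsilon(\alpha) + \varepsilon(\alpha)\delta = \maL_\Xi + n - k$ on $k$-forms shows that $\varepsilon(\alpha)$ is, up to the scalar $l + n - k$, a contracting homotopy on the homogeneity-$l$ part. Thus a $\delta$-closed homogeneous $k$-form of degree $l$ is $\delta$-exact whenever $l + n - k \neq 0$, so $E^2_{k,\gamma}$ is concentrated in the single homogeneity $l = k - k_\gamma$. I would then pass to de Rham cohomology via the symplectic star: by the Proposition of Section \ref{section-CHSM}, $(-1)^{k+1}*$ is a $\gp_\gamma$-equivariant chain map from $(\Omega^k,\delta)$ to $(\Omega^{2n-k},d)$. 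Tracking homogeneities, $*$ raises the homogeneity of a $k$-form by $n - k$, hence carries the surviving class (degree $k$, homogeneity $k-n$) to a de Rham class of form-degree $2k_\gamma - k$ and homogeneity $0$. This identifies $E^2_{k,\gamma}$ with the homogeneity-zero, $\gp_\gamma$-invariant de Rham cohomology $H^{2k_\gamma - k}_{\mathrm{dR}}(X)^{\gp_\gamma}$.

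The remaining geometric point is to recognize this homogeneity-zero cohomology. Since $X = T^*M^{\gamma}/\{0\}$ is the symplectization $\RR \times S^*M^\gamma$, with the Euler field $\Xi$ generating the $\RR$-translation in $t = \log r$, the homogeneity-zero forms are exactly the translation-invariant forms, and $d$ preserves this subcomplex because it commutes with $\maL_\Xi$. The invariant de Rham complex of the $\RR$-factor has cohomology $\CC$ in degrees $0$ and $1$, generated by $1$ and $dt$, which is precisely $H^*(S^1)$. By the Künneth formula for the invariant complex one obtains $H^*(S^*M^\gamma)\otimes H^*(S^1) = H^*(S^*M^\gamma\times S^1)$, so that $E^2_{k,\gamma} = H^{2k_\gamma - k}(S^*M^\gamma\times S^1)^{\gp_\gamma}$; the extra $S^1$ thereby emerges from the invariant cohomology of the radial direction rather than being inserted by hand.

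Finally I would show the spectral sequence degenerates at $E^2$. The group $E^2_{k,\gamma}$ lives in form-degree $k$ and homogeneity $k - k_\gamma$, while $E^2_{k-1,\gamma}$ lives in homogeneity $(k-1) - k_\gamma$, so adjacent total-degrees carry homogeneities differing by exactly $1$. A differential $d^r$ lowers the form degree by one but shifts the filtration by $r$, so for $r \geq 2$ it would connect bidegrees whose homogeneities differ by $r \neq 1$ and must vanish. Hence $HH_k(\maB)_\gamma = E^2_{k,\gamma}$, and summing over conjugacy classes yields the stated formula. I expect the main obstacle to be the $E^2$ computation, namely combining the concentration from Lemma \ref{magic} with the homogeneity bookkeeping across the symplectic star and the symplectization so that the identification with $H^*(S^*M^\gamma\times S^1)^{\gp_\gamma}$ comes out cleanly.
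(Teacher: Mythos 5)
Your argument is correct and takes essentially the same route as the paper: the paper likewise identifies $E^2$ with the $\delta$-homology of homogeneous forms, converts it to de~Rham cohomology via the symplectic $*$ and the Poisson-homology proposition, and kills the higher differentials by the homogeneity concentration coming from Lemma~\ref{magic}. Your write-up is in fact more detailed than the paper's terse proof --- it makes explicit where the $S^1$ factor comes from (the invariant cohomology of the radial direction), treats all $d^r$ with $r\geq 2$ rather than just $d^2$, and uses the correct homogeneity $l=k-k_\gamma$ where the paper's text wavers between $l=k-n$ and $l=n-k$.
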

\begin{proof}
Since the $\gp_{\gamma}$ action on $T^*M^{\gamma}/\{0\}$ is by
symplectomorphisms, all $\gamma \in\gp$ commute with the symplectic
$*$ operator and therefore with the operator $\delta$. Hence by
Theorem \ref{HKR} and the previous proposition, $E^2_{k\gamma}\simeq
  H^{2k_{\gamma}-*}(S^*M^{\gamma}\times S^1)^{\gp_{\gamma}}$.

We now observe that the differential $d^2$ for this spectral
sequence vanishes. The $E^1$ can be given a $\ZZ$ bigrading with $E^1_{k,l}$ be
k-forms of homogeneity $l$ on $T^*(M)\{0\}$. Then the differential
$d^1=\delta$ maps $E^1_{lk}\rightarrow E^1_{k-1l-1}$. This is
because the Poisson bracket decreases the homogeneity by $1$. But on
$E^1_{kl}$ by \ref{magic} we have
\[\delta\varepsilon(\alpha)+\varepsilon(\alpha)\delta=\maL_{\Xi}+n-k=l+n-k.\]
Thus unless $l=k-n$ there is a contraction for $\delta$

on$E^1_{pq}$. Therefore the only nonzero terms on $E^2_{kl}$

correspond to $l=n-k$. The differential $d^2$ must either start or
end in a $0$ term.
\end{proof}

Since the Hochschild homology groups are finite dimensional, the
Hochschild cohomology groups are the dual of the Hochschild homology
groups. We still write down explicitly $HH^0(\maB(M))$, the space of
traces on $\maB(M)$
\begin{cor}
We have $HH^0(\maB(M))=\sum_{\langle \gamma
\rangle}H_0(S^*M^{\gamma}\times S^1)$. hence its rank is the number
of path components of $S^*M^{\gamma}$.
\end{cor}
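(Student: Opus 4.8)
The plan is to obtain $HH^0(\maB(M))$ as the linear dual of $HH_0(\maB(M))$ and then read off $HH_0$ directly from Corollary \ref{HH}. As already noted, the groups $HH_*(\maB(M))$ are finite dimensional, so $HH^0(\maB(M))=\operatorname{Hom}(HH_0(\maB(M)),\CC)$ and it suffices to compute $HH_0$ and dualize. Setting $*=0$ in Corollary \ref{HH} gives
\[
HH_0(\maB(M))=\bigoplus_{\langle\gamma\rangle}
H^{2k_{\gamma}}(S^*M^{\gamma}\times S^1)^{\gp_{\gamma}},
\qquad k_{\gamma}=\dim(M^{\gamma}).
\]

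First I would identify $2k_{\gamma}$ as the top degree. The cosphere bundle $S^*M^{\gamma}$ has dimension $2k_{\gamma}-1$, so $\dim(S^*M^{\gamma}\times S^1)=2k_{\gamma}$, and the relevant cohomology is top-degree cohomology; the natural tool is therefore Poincar\'e duality. To apply it I must check orientability: $S^*M^{\gamma}$ is a deformation retract of $T^*M^{\gamma}\setminus\{0\}$, an open subset of the symplectic, hence canonically oriented, manifold $T^*M^{\gamma}$, so $S^*M^{\gamma}$ is orientable and thus so is $S^*M^{\gamma}\times S^1$. Poincar\'e duality then yields a natural isomorphism $H^{2k_{\gamma}}(S^*M^{\gamma}\times S^1)\simeq H_0(S^*M^{\gamma}\times S^1)$. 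Moreover $\gp_{\gamma}$ acts on $T^*M^{\gamma}$ by symplectomorphisms and trivially on the $S^1$ factor coming from $\CC[\rho,\rho^{-1}]$, hence acts by orientation-preserving diffeomorphisms, so this isomorphism is $\gp_{\gamma}$-equivariant.

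Finally I would dualize and pass to invariants. Since $\gp_{\gamma}$ is finite and we work over $\CC$, taking $\gp_{\gamma}$-invariants is exact and commutes with linear duality (invariants agree with coinvariants in characteristic zero), so
\[
HH^0(\maB(M))=\bigoplus_{\langle\gamma\rangle}
\bigl(H^{2k_{\gamma}}(S^*M^{\gamma}\times S^1)^{\gp_{\gamma}}\bigr)^{*}
\simeq\bigoplus_{\langle\gamma\rangle}
H_0(S^*M^{\gamma}\times S^1)^{\gp_{\gamma}}.
\]
Because $S^1$ is connected, the K\"unneth formula gives $H_0(S^*M^{\gamma}\times S^1)=H_0(S^*M^{\gamma})$, whose rank counts the path components of $S^*M^{\gamma}$ (after invariants, the $\gp_{\gamma}$-orbits of components). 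The main obstacle is not any single computation but the bookkeeping around duality and invariants: one must ensure the Poincar\'e duality isomorphism is genuinely $\gp_{\gamma}$-equivariant, which is precisely what the orientation-preserving property of the $\gp_{\gamma}$-action guarantees, so that invariants may be taken interchangeably before or after dualizing.
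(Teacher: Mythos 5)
Your proposal is correct and follows essentially the same route as the paper: set $*=0$ in Corollary \ref{HH}, use that the $\gp_{\gamma}$-action is orientation preserving, and apply Poincar\'e duality to identify the top-degree cohomology of $S^*M^{\gamma}\times S^1$ with the degree-zero group counting path components. Your extra bookkeeping on dualizing and commuting with invariants is a slightly more careful version of what the paper leaves implicit, not a different argument.
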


\begin{proof}
We have $HH_0(\maB(M))_{\gamma} = H^{2k_{\gamma}}(S^*M^{\gamma}
\times S^�)$, because the action of $\gp$ on $S^*M$ is orientation
preserving. By Poincare duality, $
H^{2k_{\gamma}}(S^*M^{\gamma}\times S^1)= H^{0}(S^*M^{\gamma}\times
S^1).$
\end{proof}

To compute the cyclic homology, we use Connes' $SBI$ exact sequence:
\[\ldots\stackrel{B}{\to} 
HH_n(\maB(M)) \stackrel{I}{\to} HC_n(\maB(M))\stackrel{S}{\to}
HC_{n-2}(\maB(M))\stackrel{B}{\to} HH_{n-1}(\maB(M))\to\ldots.\] Here
the connecting morphism $B:Tot(B_*(\maB(M)))\rightarrow
\maH_*(\maB(M))[-1]$ is the cyclic boundary map $B$.
\begin{prop}
The connecting morphism $B$ in the $SBI$ exact sequence for
$\maB(M)$ vanishes and hence,
\[HC_K(\maB(M)(M)) = \sum_{j\ge 0}
HH_{k-2j}(\maB(M)(M)).\]
\end{prop}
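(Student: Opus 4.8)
The plan is to establish that the connecting morphism $B$ in the SBI sequence is zero; once this is in hand the long exact sequence breaks into short pieces $0\to HH_n(\maB(M))\stackrel{I}{\to} HC_n(\maB(M))\stackrel{S}{\to} HC_{n-2}(\maB(M))\to 0$, which split because all the groups are $\CC$-vector spaces, and an induction on $n$ (with $HC_n(\maB(M))=0$ for $n<0$) then yields $HC_k(\maB(M))=\bigoplus_{j\ge 0}HH_{k-2j}(\maB(M))$. So the entire content is the vanishing of $B$.

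To prove that vanishing I would exploit the homogeneity grading carried by the associated graded algebra $\graded(\maB(M))\simeq \smooth(S^*M)\rtimes\gp\otimes\CC[\rho,\rho^{-1}]$, graded by powers of $\rho$, i.e.\ by the degree of fibrewise homogeneity measured by $\maL_{\Xi}$. First I would observe that Connes' operator $B$ respects this grading: at the chain level $B$ inserts the unit $1=\rho^0$ and cyclically permutes the remaining entries, and since the product on $\graded(\maB(M))$ is homogeneous, both $b$ and $B$ preserve the total homogeneity of a chain. Because the order filtration on $\maB(M)$ is compatible with $b$ and $B$, the induced operator $\bar B\colon HH_*(\maB(M))\to HH_{*+1}(\maB(M))$ acts on the associated graded of the order spectral sequence while preserving homogeneity.

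Next I would invoke the concentration already established in Corollary \ref{HH}: for each conjugacy class the group $HH_k(\maB(M))_{\gamma}$ is supported in a single homogeneity $l(k)$, and by Lemma \ref{magic} (the relation $\delta\varepsilon(\alpha)+\varepsilon(\alpha)\delta=\maL_{\Xi}+k_{\gamma}-k$) the values $l(k)$ and $l(k+1)$ differ by exactly one. Since $\bar B$ raises the Hochschild degree by one while preserving homogeneity, its image in degree $k+1$ would have to land in homogeneity $l(k)\neq l(k+1)$, and so it vanishes on the associated graded; as the filtration is finite and each $HH_k(\maB(M))_{\gamma}$ is pure, $\bar B=0$ on $HH_*(\maB(M))$ itself. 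The identical homogeneity count annihilates every higher differential of the cyclic $(b,B)$-spectral sequence, so that sequence degenerates at $E^1=HH_*(\maB(M))$, which is precisely the statement that the connecting map $B$ in the SBI sequence is zero.

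The main obstacle here is bookkeeping rather than conceptual: one must check carefully that the homogeneity grading, which a priori lives only on the associated graded of the order filtration, genuinely controls the operator $B$ on the unfiltered groups $HH_*(\maB(M))$ and $HC_*(\maB(M))$, and that no higher cyclic differential can mix the distinct homogeneities $l(k)$. Once the vanishing is secured on the associated graded and transported upward using the finiteness of the filtration together with the purity of each $HH_k(\maB(M))_{\gamma}$, the splitting of the SBI sequence and the resulting direct-sum formula follow at once.
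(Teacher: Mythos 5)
Your proposal is correct and follows essentially the same route as the paper: both arguments observe that $B$ respects the order filtration (hence induces a map of the associated spectral sequences preserving the homogeneity degree $l$), and both conclude from the concentration of the $E^2$-term in the single homogeneity $l=2k_{\gamma}-k$ (via Lemma \ref{magic}) that the degree-raising map $B$ must vanish because the homogeneities of source and target differ by one. The only cosmetic difference is that the paper explicitly identifies the induced map on $E^1$ with the de Rham differential before running the same homogeneity count, while you keep the argument at the level of the $\rho$-grading.
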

\begin{proof}
Since $\maB(M)$ is unital, $B$ is induced from a chain map
\[Tot(B_*(\maB(M)))\rightarrow \maH_*(\maB(M))[-1]\] that respects the
filtration on both the source and the range complex. Thus $B$ induces
a map on the spectral sequence $EC^*$ of $Tot(B_*(\maB))$ into
spectral sequence $EH^*$ of $\maH_*(\maB(M))[-1]$. The induced map
on $EC^1\rightarrow EH^1$ when identified with the space of
differential forms is the   deRham differential $d$ which vanishes on
$E^2$.
$$d:EC^1_{k,l}\oplus_j\Omega^{k-2j}(S^*M^{\gamma}\times
S^1)^{|gp_{\gamma}}_l\rightarrow
EH^1_{k,l}\Omega^{k+1}(S^*M^{\gamma}\times S^1)^{\gp_{\gamma}}_l.$$

For the $EC$ term to contribute to nonzero homology,
$l=2k_{\gamma}-k$, where as for the $EH$ term $l=2k_{\gamma}-k-1$.

The formula for $HC_K(\maB)$ then follows from the $SBI$ sequence.
\end{proof}

We can note that the map induced by $B$ on $EH^1$ is a differential
implies in particular that
\begin{cor}$HP_j(\maB(M))=\sum_{k\in\ZZ}HH_{j-2k}(\maB(M))$.
\end{cor}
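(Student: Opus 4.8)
The plan is to realize the periodic groups as an inverse limit of the cyclic groups under the periodicity operator $S$, exploiting the vanishing of the connecting map $B$ just established. Recall that $HP_j(\maB(M))=\underleftarrow{\textrm{lim}}\,HC_{j+2k}(\maB(M))$, the limit being taken over $k$ along the maps $S$, and that this sits in the Milnor exact sequence
\[0\to\underleftarrow{\textrm{lim}}^1 HC_{j+2k+1}(\maB(M))\to HP_j(\maB(M))\to\underleftarrow{\textrm{lim}}\,HC_{j+2k}(\maB(M))\to0.\]
It therefore suffices to compute the $\underleftarrow{\textrm{lim}}$ term and to check that the derived limit $\underleftarrow{\textrm{lim}}^1$ vanishes.

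First I would use the preceding proposition, in which $B$ was shown to vanish, to split the $SBI$ sequence into the short exact sequences
\[0\to HH_n(\maB(M))\stackrel{I}{\to}HC_n(\maB(M))\stackrel{S}{\to}HC_{n-2}(\maB(M))\to0.\]
In particular every periodicity map $S$ is surjective, so each tower $\{HC_{n+2k}(\maB(M))\}_k$ satisfies the Mittag--Leffler condition; applied to the tower appearing in the $\underleftarrow{\textrm{lim}}^1$ term above this makes that derived limit vanish, whence $HP_j(\maB(M))\cong\underleftarrow{\textrm{lim}}\,HC_{j+2k}(\maB(M))$. To evaluate this limit I would invoke the decomposition $HC_{j+2k}(\maB(M))=\sum_{i\ge0}HH_{j+2k-2i}(\maB(M))$ from the same proposition: under it the map $S$ is exactly the projection forgetting the top summand $HH_{j+2k}(\maB(M))$. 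A compatible sequence in the limit is then the same datum as an arbitrary choice of element in each group $HH_{j-2m}(\maB(M))$, $m\in\ZZ$. By Corollary \ref{HH} these groups equal $\sum_{\langle\gamma\rangle}H^{2k_{\gamma}-(j-2m)}(S^*M^{\gamma}\times S^1)^{\gp_{\gamma}}$, which vanish once $j-2m$ leaves a bounded range of degrees; thus only finitely many summands survive and the limit is the finite sum $\sum_{m\in\ZZ}HH_{j-2m}(\maB(M))$.

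The only delicate point is the control of the derived limit, and it is settled in two mutually reinforcing ways: surjectivity of $S$ yields Mittag--Leffler directly, while, exactly as in the proof of the earlier corollary for $\smooth(M)\rtimes\gp$, the boundedness of $HH_*(\maB(M))$ forces $S$ to be an isomorphism in all sufficiently high degrees, so each tower is eventually constant and $\underleftarrow{\textrm{lim}}^1=0$ regardless of parity. Combining the vanishing of the derived limit with the identification of the ordinary limit gives $HP_j(\maB(M))=\sum_{k\in\ZZ}HH_{j-2k}(\maB(M))$, as claimed.
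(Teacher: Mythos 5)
Your argument is correct and follows essentially the same route as the paper: the paper deduces the corollary from the vanishing of $B$ in the $SBI$ sequence together with the stabilization of the tower $\{HC_{j+2k}\}$ under $S$ (exactly as in its earlier corollary for $\smooth(M)\rtimes\gp$, where $S$ is an isomorphism in high degrees because $HH_n$ vanishes there and $\underleftarrow{\textrm{lim}}^1=0$). Your write-up merely makes explicit the Milnor sequence and the Mittag--Leffler condition that the paper leaves implicit.
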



\begin{thebibliography}{1}
\bibitem{AB1} M.F. Atiyah and R. Bott, \emph{A Lefschetz fixed point
formula for elliptic complexes. I}, Ann. of Math. \textbf{86}
(1967), 374-407.

\bibitem{AtiyahSinger4} M. F. Atiyah and I. M. Singer, \emph{The
index of elliptic operators IV}, Ann. of Math. \textbf{93} (1971),
119--138.

\bibitem{BaumConnes}
P. Baum\ and\ A. Connes, in {\it Operator algebras and applications,
Vol.\ 1}, 1--20, Cambridge Univ. Press, Cambridge, 1988; MR0996437
(91i:46083)

\bibitem{BenameurNistor} M.-{T}. Benameur and {V}. {N}istor,
\emph{Homology of complete symbols and Noncommutative geometry}, in
{\em Collected papers on Quantization of Singular Symplectic
Quotients}, Landsman, Pflaum, and Schlichenmaier Ed., pp. 21--47,
Progress in Math., Birkh\"{a}user, 2001.

\bibitem{Bruening} J. Br{\"u}ning and E. Heintze,
   \emph{Representations of compact Lie groups and elliptic operators}, Invent. Math.,
   \textbf{50},
  {1978/79},
   no.2,{169--203},

\bibitem{Brylinski} J.-{L}. Brylinski, \emph{A differential complex
for {P}oisson manifolds}, J.{D}iff. {G}eom.  \textbf{28} (1988),
93--114.

\bibitem{Br-Gz} J.-{L}. Brylinski and {E}. {G}etzler,
\emph{The homology of {A}lgebras of {P}seudo-differential {S}ymbols
and the {N}oncommutative {R}esidue}, $K$-{T}heory \textbf{1} (1987),
385--403.

\bibitem{Br-Ni}J.-{L}. Brylinski and V. Nistor, \emph{Cyclic
cohomology of \'{e}tale groupoids}, $K$-Theory \textbf 8 (1994),
no.~4, 341--365.


\bibitem{Connes79} A.~Connes, Sur la th\'eorie noncommutative de
l'integration, In P.~de~la Harpe, editor, {\em Alg\`ebres
d'{O}p\'erateurs}, volume 725 of {\em Lecture Notes in Math.}, pages
19--143. Springer-Verlag, Berlin-Heidelberg-New York, 1979.

\bibitem{ConnesNCG} A.~Connes, \emph{Noncommutative differential
geometry}, Publ. Math. IHES \textbf{62} (1985), 41--144.

\bibitem{Co} A.~Connes.  \newblock {\em Noncommutative
Geometry}, \newblock Academic Press, New York - London, 1994.

\bibitem{Co-Mo}A. Connes\ and\ H. Moscovici, Geom. Funct. Anal. {\bf 5} (1995),
no.~2, 174--243; MR1334867 (96e:58149)

\bibitem{Dave}S. Dave, \emph{An equivariant non-commutative residue and an
equivariant Weyl's  theorem}, priprint

\bibitem{GuilleminDuistermaat} J.J Duistermaat and V.~Guillemin,
\emph{ The spectrum of positive elliptic operators and periodic
bicharacteristics}, Invent. Math. \textbf{29}, (1975), no. 1,
39--79.

\bibitem{Grothendieck}A. Grothendieck, \emph{Produits tensoriels topologiques et
espaces nucl�aires}, Mem. Amer. Math. Soc. {\bf 1955} (1955),
no.~16, 140 pp.


\bibitem{Guillemin} V.~Guillemin, \emph{A new proof of Weyl's formula
on the asymptotic distribution of eigenvalues}, Adv. in Math.
\textbf{55}, (1985) , 131-160.


\bibitem{Hormander} L. H\"ormander, \emph{The analysis of linear
partial differential operators}, Classics in Mathematics,
Springer-Verlag, Berlin, 2003.



\bibitem{KaroubiCyclic} M. Karoubi, \emph{Formule de Kunneth en
homologie cyclique.}, C. R. Acad. Sci. Paris S\'er. I Math. 303
(1986) no. 13, 595--598.

\bibitem{Karoubi} M. Karoubi, \emph{Homologie cyclique et
$K$-th\'eorie}, Asterisqu\'e, \textbf{149} (198), 147 pp.


\bibitem{Loday} J-L Loday, \emph{Cyclic homology}, A Series of
comprehensive studies in Mathematics 301, (1992).


\bibitem{MacLane} S.~Mac~Lane, \emph{Homology}, Springer-Verlag,
Berlin-Heidelberg-New York, 1995.

\bibitem{MelroseNistor} R. Melrose and V. Nistor, \emph{Homology
of pseudodifferential operators I. Manifolds with boundary},

\bibitem{Nistor1} V. Nistor, \emph{ Group cohomology and the cyclic cohomology of crossed products},
 Invent. Math.,V 99, (1990), 411--424

\bibitem{schaefer}H. H. Schaefer\ and\ M. P. Wolff, {\it Topological vector spaces},
Second edition, Springer, New York, 1999; MR1741419 (2000j:46001)


\bibitem{Shubin} M.~A. Shubin, \emph{Pseudodifferential operators and
spectral theory}, Springer Verlag, Berlin-Heidelberg-New York, 1987.

\bibitem{Seeley} R. T. Seeley, \emph{Complex powers of an elliptic operator},
Proc. Symp. Pure Math. 10, (1967), 288-307.

\bibitem{Wodzicki} M. Wodzicki, \emph{Noncommutative residue. I. Fundamentals},
$K$-theory, arithmetic and geometry (Moscow, 1984--1986), Lecture
Notes in Math. \textbf{1289}, 320-399, Springer, Berlin, {1987}.


\bibitem{Wodzicki1} M.~Wodzicki, \emph{Excision in cyclic homology and
in rational algebraic {K}-theory}.  Annals of Mathematics,
129:591--640, 1989.

\end{thebibliography}
\end{document}